\definecolor{vertfonce}{rgb}{0.20, 0.46, 0.25}
\definecolor{rougefonce}{rgb}{0.64, 0.09, 0.20}
\theoremstyle{plain}
\newtheorem{theorem}{Theorem}[section]
\newtheorem{lemma}[theorem]{Lemma}
\newtheorem{proposition}[theorem]{Proposition}
\theoremstyle{definition}
\newtheorem{definition}[theorem]{Definition}
\theoremstyle{remark}
\newtheorem{remark}[theorem]{Remark}
\numberwithin{equation}{section}
\theoremstyle{definition}
\DeclarePairedDelimiterX\braket[2]{\langle}{\rangle}{#1\,\delimsize\vert\,\mathopen{}#2}
\def\eps{\varepsilon}
\def\R{{\mathbb R}}% real numbers
\def\C{{\mathbb C}}% complex numbers
\def\N{{\mathbb N}}% nonnegative integers
\def\Z{{\mathbf Z}}% integers
\mathchardef\mhyphen="2D % math hypehn
\def \Op{{\rm Op}}
\def \jac {{\rm jac}}
\def \supp {{\rm supp}}
\def \div{{\rm div}}
\def \Exp{{\rm Exp}}
\def \Ln{{\rm Ln}}
\def \exp{{\rm exp}}
\def \O{\mathscr{O}}
\def \cD{\mathcal{D}}
\def \cK{\mathcal{K}}
\def \cV{\mathcal{V}}
\def \cS{\mathcal{S}}
\def \bX{\mathbb{X}}
\def \bS{\mathbb{S}^{1}}
\def \pr{{\rm pr}}
\def \Heis{\mathbb{H}}
\def \Ghat{\widehat{G}}
\def \Hhat{\widehat{\Heis}}
\def \cH{\mathcal{H}}
\def \cF{\mathcal{F}}
\def \fg{\mathfrak{g}}
\def \fh{\mathfrak{h}}
\def \fv{\mathfrak{v}}
\def \fr{\mathfrak{r}}
\def \princ{{\rm princ}_{\hbar}}
\def\Tend#1#2{\mathop{\longrightarrow}\limits_{#1\rightarrow#2}}
\title[]{Quantum ergodicity for contact metric structures}
\author[L. Benedetto]{Lino Benedetto}
\address[L. Benedetto]{DMA, École normale supérieure, Université PSL, CNRS, 75005 Paris, France \& Univ Angers, CNRS, LAREMA, SFR MATHSTIC, F-49000 Angers, France} 
\email{lbenedetto@dma.ens.fr}
\numberwithin{equation}{section}
\begin{document}

\begin{abstract}
    This paper is dedicated to the proof of a Quantum Ergodicity (QE) theorem for the eigenfunctions of subLaplacians on contact metric manifolds, under the assumption that the Reeb flow is ergodic. To do so, we rely on a semiclassical pseudodifferential calculus developed for general filtered manifolds that we specialize in the setting of contact manifolds. Our strategy is then reminiscent of an implementation of the Born-Oppenheimer approximation as we rely on the construction of microlocal projectors in our calculus that commute with the subLaplacian, called \emph{Landau projectors}. The subLaplacian is then shown to act effectively on the range of each Landau projector as the Reeb vector field does. The remainder of the proof follows the classical path towards QE, once microlocal Weyl laws have been established.
\end{abstract}

\maketitle

\tableofcontents

\section{Introduction}

\subsection{Motivation}

Quantum Ergodicity (QE) theorems are concerned with the delocalization (or concentration) properties of stationary states of quantum systems in the high-energy limit. Mathematically, a version of QE can be expressed as follows: let $M$ be a compact metric space, endowed with a measure $\mu$, and let $P$ denote a self-adjoint operator on $L^2(M,\mu)$, bounded by below and having a compact resolvent (and hence a discrete spectrum). Let $(\varphi_k)_{k\in\N}$ be an orthonormal Hilbert basis consisting of eigenfunctions of $P$, and associated with the sequence of eigenvalues $\delta_0\leq\dots\leq\delta_k\leq\dots\rightarrow+\infty$. We say that QE holds for this eigenbasis if there exist a probability measure $\nu$ on $M$ and a density-one sequence $(k_j)_{j\in\N}$ of integers such that 
\begin{equation*}
    |\varphi_{k_j}|^2d\mu \rightharpoonup \nu\quad\mbox{when}\ j\rightarrow+\infty.
\end{equation*}

QE theorems have their origin in the seminal work of A. Schnirelman \cite{Sch}, and have been mainly stated in terms of the usual pseudodifferential calculus with $M$ being a compact Riemannian manifold and $P$ the Laplace-Beltrami operator. QE is then established under the assumption of ergodicity of the geodesic flow on the unit cotangent bundle for the Liouville measure, see \cite{CdV, Zel}. This result has been extended to the setting of manifolds with boundary in \cite{GL, ZZ}, to the semiclassical regime in \cite{HMR}, to the case of discontinuous metrics \cite{JSS}, as well as for Laplace and Dirac operators acting on bundles in \cite{JS}. However, the operators studied in the vast majority of these papers all share the property of being \emph{elliptic}. 

To our knowledge, the only QE theorem proved in the literature for non-elliptic operators is given by \cite[Theorem A]{CdVHT18}, which is stated on a closed 3D subRiemannian (sR) contact manifold and where the operator $P$ is given by its associated subLaplacian. Here, the ergodicity assumption falls onto the Reeb flow induced by the natural contact structure of such a sR manifold. In the broader question of finding the invariance properties of quantum limits for such a subLaplacian, we also mention the work \cite{AR}, where the addition of a semiclassical parameter allows for a precise multiscale asymptotic analysis and where the Reeb flow is also shown to play a predominant role.

\vspace{0.25cm}

The present paper is concerned with an extension of the result presented in \cite{CdVHT18} to any odd-dimensional closed contact manifold $(M^{2d+1},\eta)$ endowed with a contact metric structure $(\phi,g)$, i.e.,~$g$ is an associated Riemannian metric and $\phi$ is an almost complex structure on the contact distribution $$\cD = \ker \eta,$$ all compatible with each other in the sense that they satisfy
\begin{equation*}
    \forall X,Y\in\Gamma(\cD), \quad g(X,\phi Y) = d\eta(X,Y),
\end{equation*}
see Section~\ref{subsect:associatedmetrics}.

For such manifolds, the Riemannian volume form is in fact independent of the associated metric $g$ and is a multiplicative constant of the contact volume form $\eta\wedge (d\eta)^d$. We denote the associated normalized volume form by $\nu$.

The subLaplacian associated with the subRiemannian contact structure $(M^{2d+1},\eta,g)$ and with the measure $\nu$ (see Section~\ref{subsect:sublaplacian} for a precise definition), which we denote by $\Delta_{sR}$, is a differential operator which is essentially self-adjoint, hypoelliptic and which has discrete point spectrum given by the sequence 
\begin{equation*}
    0\leq \delta_0\leq \delta_1\leq \dots\leq\delta_k\leq\dots\rightarrow +\infty,
\end{equation*} 
with repetitions according to multiplicity. Moreover, there exists a Hilbert basis $(\varphi_k)_{k\in\N}$, such that for every $k\in \N$,
\begin{equation*}
    -\Delta_{sR}\varphi_k = \delta_k \varphi_k \quad\mbox{and}\quad \|\varphi_k\|_{L^2(M,\nu)} = 1.
\end{equation*}

Then our main result is the following one.

\begin{theorem}
    \label{thm:main}
    We assume that the Reeb flow is ergodic on $(M,\nu)$. Then, for any orthonormal Hilbert basis $(\varphi_k)_{k\in\N}$ consisting of eigenfunctions of the subLaplacian $-\Delta_{sR}$ associated with the eigenvalues $(\delta_k)_{k\in\N}$, there exists a density-one sequence $(k_j)_{j\in\N}$ of integers such that
    \begin{equation*}
        \int_{M}a(x)|\varphi_{k_j}(x)|^2\,d\nu(x) \Tend{j}{+\infty} \int_{M}a(x)\,d\nu(x),
    \end{equation*}
    for any continuous function $a$ on $M$.
\end{theorem}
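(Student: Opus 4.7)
The plan is to adapt the Schnirelman–Colin de Verdière–Zelditch scheme to the hypoelliptic setting, using the filtered semiclassical calculus alluded to in the abstract. Since $-\Delta_{sR}$ is not elliptic, symbols must be operator-valued: the natural principal symbol of $-\hbar^2\Delta_{sR}$ lives in the unitary dual of the Heisenberg group $\Heis$, and in each non-trivial representation $\pi_{\omega}$ the fiberwise sublaplacian is a harmonic oscillator with spectrum $\{(2n+d)|\omega|:n\in\N\}$, the Landau levels.

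I would first construct microlocal Landau projectors $\Pi_n^{\hbar}$ whose principal symbol is the fiberwise spectral projector onto the $n$-th Landau level. The standard recipe is a contour integral of the resolvent of $-\hbar^2\Delta_{sR}$ followed by a symbolic correction designed to enforce $[\Pi_n^{\hbar},-\hbar^2\Delta_{sR}]=O(\hbar^{\infty})$ together with $\sum_n \Pi_n^{\hbar} = \mathrm{Id}$ modulo $\hbar^{\infty}$ on a microlocal neighborhood of the characteristic set. A Born–Oppenheimer-type argument then identifies the effective operator on the range of $\Pi_n^{\hbar}$, up to lower order, with $(2n+d)\hbar|R|_{\mathrm{op}}$, where $|R|_{\mathrm{op}}$ is a positive first-order pseudodifferential operator whose Hamiltonian flow on the symplectic cone lifts the Reeb flow on $(M,\nu)$.

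With this in hand, the usual chain runs. An Egorov theorem, proved on $a$ supported in a single Landau band and conjugated by the propagator $e^{it(2n+d)^{-1}\sqrt{-\Delta_{sR}}}$, yields transport along the Reeb flow on $M$. A microlocal Weyl law, obtained by computing $\mathrm{Tr}\bigl(\chi(-\hbar^2\Delta_{sR})\,\Op_\hbar(a)\bigr)$ band by band via the $\Pi_n^{\hbar}$, produces $\int_M a\,d\nu$ as the averaged leading term. The ergodicity of the Reeb flow combined with Egorov then yields the Schnirelman variance estimate
\begin{equation*}
\sum_{\delta_k\leq\Lambda}\bigl|\langle\varphi_k,\Op_\hbar(a)\varphi_k\rangle-\overline{a}\bigr|^2 \;=\; o\bigl(N(\Lambda)\bigr),
\end{equation*}
where $N(\Lambda)$ is the Weyl counting function. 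A diagonal extraction furnishes the density-one subsequence, first for $a\in C^{\infty}(M)$ and then by density for continuous $a$.

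The main obstacle is the Born–Oppenheimer reduction itself: one must verify that the operator-valued calculus functions as a genuine symbol calculus (composition, adjoints, and remainder estimates all behaving as in the elliptic theory), and one must handle the degeneracy at $\omega=0$, where the spacing of Landau levels collapses and $-\Delta_{sR}$ loses ellipticity transversally to $\cD$. The spacing issue forces all microlocal constructions to be carried out away from $\{\omega=0\}$, and the hypoellipticity of $\Delta_{sR}$ must be invoked to argue that the contribution of that region is negligible in the QE statement. A secondary difficulty is that the bands are indexed by $n\in\N$, so the summation over $n$ in the Weyl law and the variance estimate must be controlled uniformly, which I expect to require tame estimates on the Schwartz-type seminorms of $\Pi_n^{\hbar}$ in $n$.
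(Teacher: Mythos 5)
Your proposal follows the paper's strategy closely: operator-valued symbols on the dual tangent bundle $\Ghat M$, Landau projectors commuting with $-\hbar^2\Delta_{sR}$ to $O(\hbar^\infty)$ built via a Born--Oppenheimer / superadiabatic scheme, an Egorov theorem on each Landau band, a microlocal Weyl law via the semiclassical functional calculus, and the Schnirelman variance argument band by band with a cutoff $\rho$ supported away from $\lambda=0$. Two points, however, are off.

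The propagator is wrong. You propose conjugating by $e^{it(2n+d)^{-1}\sqrt{-\Delta_{sR}}}$; the square root is an elliptic-analogy reflex that does not apply here. In the Riemannian case the symbol of $-\Delta$ is $|\xi|^2$, so one passes to $\sqrt{-\Delta}$ to get the geodesic flow (generated by $|\xi|$). On the $n$-th Landau band the effective symbol of $-\hbar^2\Delta_{sR}$ is already $(2n+d)|\lambda|$ --- \emph{linear}, not quadratic, in the dual variable $\lambda$ on the characteristic cone $\Sigma$ --- and the Hamiltonian flow of $|\lambda|$ on $\Sigma$ is the Reeb flow (Proposition~\ref{prop:hamliftReeb}). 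Accordingly the Egorov theorem (Theorem~\ref{thm:Egorov}) conjugates by $e^{it\Delta_{sR}}$, with no square root, and there is no need for the $(2n+d)^{-1}$ prefactor since ergodicity is insensitive to a constant time rescaling. Had you taken $\sqrt{-\Delta_{sR}}$, the induced flow would be that of $\sqrt{(2n+d)|\lambda|}$, which does not transport along the Reeb flow.

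The treatment of $\lambda\to 0$ is also not quite as you describe. You invoke hypoellipticity to argue that the region $\{\lambda\approx 0\}$ is negligible; in fact what makes that region harmless in the Weyl law and the variance estimate is a purely measure-theoretic cancellation: the fiberwise trace ${\rm Tr}\,{\bm 1}_{[0,1]}(H(\lambda))$ grows like $|\lambda|^{-d}$ as $\lambda\to 0$ (estimate~\eqref{eq:estimatetraceH}), but the Plancherel measure $|\lambda|^d\,d\lambda$ exactly absorbs this, so the contribution of $\{|\lambda|\le\eps\}$ to the Weyl integral is $O(\eps)$. The same bound disposes of your worry about uniformity in $n$: one works with the finitely many bands $n<m$ (and a cutoff $\rho_m$ supported in $\{|\lambda|\ge(2m+d)^{-1}\}$), bounds the variance of the remaining part by $O((2m+d)^{-1})$, and lets $m\to\infty$ at the very end. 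No tame estimates on Schwartz seminorms of $\Pi_n$ uniform in $n$ are required. Hypoellipticity enters only implicitly, through the functional calculus (Theorem~\ref{thm:functionalcalc}) and the discreteness of the spectrum.
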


The principal source of examples of contact metric structures where the Reeb vector field generates an ergodic flow is given by \emph{unit tangent bundles of Riemannian manifolds of negative sectional curvature}. Indeed, if $(M^{d+1},g)$ denotes such a manifold, its unit tangent bundle
\begin{equation*}
    SM = \{(x,v)\in TM\,:\,\|v\|_{g_x}=1\},
\end{equation*}
is a contact $(2d+1)$-manifold for the natural Liouville 1-form; moreover, as detailed in \cite[Section 9.2]{Blair}, $SM$ can be endowed with an associated metric, in the sense of Definition~\ref{def:contactmetricstruct}, called the \emph{Sasaki metric}. In this setting, the Reeb flow on $SM$ then coincides with the \emph{geodesic flow} and the assumption on the sectional curvature of $M$ readily implies the desired ergodicity property.

We also emphasize that by Section~\ref{subsubsect:3DsR}, any three dimensional closed subRiemannian contact manifold defines a contact metric structure, and we recover in particular the result of \cite{CdVHT18}.

\begin{remark}
    We mention, however, that \cite[Theorem A]{CdVHT18} has a more microlocal flavor than ours in the 3 dimensional case, since it is formulated in terms of classical pseudodifferential calculus. Up to cumbersome additions, Theorem~\ref{thm:main} could also be stated microlocally, but we would ultimately not be able to compare it directly with \cite[Theorem A]{CdVHT18} since our pseudodifferential calculi differ. More importantly, as soon as we consider contact metric manifolds of dimension higher than 5, the classical dynamic underlying the high-energy limit of eigenfunctions is given by a collection of partial connections lifting the Reeb flow to the so-called \emph{Landau bundles}, see Section~\ref{subsubsect:partialconnectLandau}. Thus, identifying the semiclassical measure (as defined in \cite{BFF25}) of a density-one subsequence of eigenfunctions would require a stronger assumption than the ergodicity of the Reeb flow, as in the spirit of \cite{JS}.
\end{remark}

\subsection{Strategy of the proof}

Similarly to any QE result, the first step towards relating the high-energy limit of eigenfunctions with a certain classical flow is to lift the density measures $|\varphi_k|^2\,d\nu$, $k\geq 0$, on $M$ to a certain phase space. Classically, the phase space of a smooth manifold $M$ is taken to be the cotangent bundle $T^*M$. However, for a contact manifold, seen as a filtered manifold, another choice can be made that is better suited to the anisotropic nature of the subLaplacian $\Delta_{sR}$. 

Following the presentation of \cite{FFF24}, we can associate with $M^{2d+1}$ its \emph{osculating Lie algebra bundle} $\mathfrak{g}M$, which is a vector bundle locally modeled on the Heisenberg Lie algebra $\mathfrak{h}^d$. Using the exponential map of nilpotent Lie groups, we can also consider its \emph{tangent group bundle} $GM$, where $G_xM = {\rm Exp}(\mathfrak{g}_xM)$, which will replace to all effects the usual tangent bundle $TM$. Our choice of phase space is then given by the \emph{dual tangent bundle} defined by
\begin{equation*}
    \Ghat M := \{(x,\pi)\,:\,x\in M,\,\pi\in\widehat{G_x M}\},
\end{equation*}
where $\widehat{G_x M}$ denotes the topological set of strongly continuous irreducible unitary representations of $G_xM$, up to equivalence.

All these bundles admit nice local trivializations described in Section~\ref{sect:tanggroupbundle}, thanks to the existence of well-suited local frames of $TM$, called $\phi$-frames, see Section~\ref{subsect:associatedmetrics}. In particular, we identify a dense open subset of the dual tangent bundle, denoted by $\Ghat_\infty M$, that is homeomorphic to the \emph{characteristic manifold} of $M$, a smooth and symplectic manifold.

\vspace{0.25cm}

These bundles lead to the following definition of symbols. Indeed, the symbols that we introduce are fields of operators
\begin{equation*}
    \sigma = \{\sigma(x,\pi):\mathcal{H}_\pi^\infty\rightarrow\mathcal{H}_\pi^\infty\,:\,(x,\pi)\in\Ghat M\},
\end{equation*}
where $\mathcal{H}_\pi^\infty$ denotes the subspace of smooth vectors of the space of representation $\mathcal{H}_\pi$ of $\pi$.

In fact, we can define symbol classes $S^m(\Ghat M)$, $m\in\R\cup\{-\infty\}$. Introducing a semiclassical parameter $\hbar>0$, we also define global semiclassical quantization procedures for such symbols, see Section~\ref{subsubsect:semiquantization}. The resulting semiclassical pseudodifferential calculus is denoted by 
\begin{equation*}
    \Psi_\hbar^m(M),\ m\in \R\cup\{-\infty\}.
\end{equation*}
In particular, we have
\begin{equation*}
    -\hbar^2\Delta_{sR} \in \Psi_\hbar^2(M),
\end{equation*}
and we denote by $H\in S^2(\Ghat M)$ its principal symbol. Using local trivializations of $\Ghat M$ given by $\phi$-frames, the symbol $H$ can be shown to be described by a field of harmonic oscillators, more precisely by a sum of $d$ one-dimensional harmonic oscillators with pondering coefficients set to 1, i.e., in a completely resonant state. This fact allows us to distinguish the different Landau levels of the symbol $H$ and to define the associated projector $\Pi_n$, $n\in\N$, as homogeneous symbols in $\dot{S}^0(\Ghat M)$, see Section~\ref{subsect:Landauproj}.

Fixing a global quantization procedure $\Op_\hbar$, we observe that the symbolic calculus gives us for every $n\in\N$,
\begin{equation*}
    [-\hbar^2 \Delta_{\rm sR},\Op_\hbar(\Pi_n)] \in \hbar \Psi_\hbar^{1}(M)\quad\mbox{and}\quad\Op_\hbar(\Pi_n)\circ \Op_\hbar(\Pi_n)=\Op_\hbar(\Pi_n) + \hbar\Psi_\hbar^{-1}(M).
\end{equation*} 

Thus, up to a remainder of order $\O(\hbar)$, the quantization of the symbol $\Pi_n$ gives rise to a projector that commutes with the subLaplacian. Section~\ref{subsect:Landauproj} is then devoted to show that, in the spirit of the implementation of a Born-Oppenheimer approximation, by following the ideas of \cite{BFKRV}, this remainder can be improved to any power of $\hbar$, i.e.,~up to $\O(\hbar^\infty)$, up to correcting the symbols $\Pi_n$, $n\in\N$, with lower order symbols. The resulting operators, which we denote here by $\hat{\Pi}_n^\hbar$, are called \emph{Landau projectors}.

\vspace{0.25cm}

The crucial observation that we make is that the subLaplacian acts on the range of the Landau projector $\hat{\Pi}_n^\hbar$, $n\in\N$, as does the operator
\begin{equation*}
    \hbar^2(2n+d)|R| + \O(\hbar^2),
\end{equation*}
see Proposition~\ref{prop:opeffectif}. From this result, we are able to prove an Egorov theorem for observables of the form $\hat{\Pi}_n^\hbar A \hat{\Pi}_n^\hbar$, with $A\in\Psi_\hbar^{-\infty}(M)$, which we refer to as \emph{Toeplitz operators}, see Theorem~\ref{thm:Egorov}.

\vspace{0.25cm}

In order to apply the now classical path towards QE, as presented, for instance, in \cite{An}, we are left with computing microlocal Weyl laws for observables in $\Psi_\hbar^{-\infty}(M)$. We do so using the functional calculus of the subLaplacian, which is shown by Theorem~\ref{thm:functionalcalc} to be part of the semiclassical pseudodifferential calculus $\Psi_\hbar^{-\infty}(M)$ for smooth functions with compact support of $-\hbar^2\Delta_{sR}$. The final arguments for the proof of Theorem~\ref{thm:main} are given in Section~\ref{subsect:proofmain}.
 
\begin{remark}
    We mention here that the crucial argument in both \cite{AR} and \cite{CdVHT18} is a Birkhoff normal form for the cometric $g_\cD^*$ close to its zero locus, i.e.~the characteristic manifold. The proof of this normal form, as presented in \cite[Appendix C]{CdVHT18}, is divided into two parts: if the first part could certainly be extended to the setting of contact metric manifolds due to the completely resonant state of the harmonic oscillators discussed above, the second part, referred to there as \emph{Melrose normal form}, is, however, specific to dimension 3 due to the same presence of resonances.
\end{remark}

We point out that this strategy towards QE on contact metric manifolds should be robust enough to be adapted to the setting of quasi-contact manifolds of dimension 4, as well as to Engel manifolds, where the high-energy limit is expected to exhibit geometric flows in relation with \emph{abnormal geodesics}.

\subsection{Organization of the paper}

In Section~\ref{sect:contactmanifolds}, we recall the main definitions and properties concerning contact manifolds and their associated compatible metrics. We also define the subLaplacian we wish to investigate and present a list of examples of contact metric manifolds which can be of particular interest.

Section~\ref{sect:tanggroupbundle} describes the tangent group bundle of contact metric manifolds seen as filtered manifolds. We recall there the harmonic analysis of the Heisenberg group, as well as make precise the relation between the dual tangent bundle and the characteristic manifold of the contact manifold.
In Section~\ref{sect:semipseudocalc}, we use these bundles to give a precise geometric description of the semiclassical pseudodifferential calculus for filtered manifolds (see \cite{BFF25}) in the case of contact metric manifolds. We conclude the section with the construction of the Landau projectors.

Section~\ref{sect:egorov} is dedicated to the proof of the Egorov theorem for Toeplitz operators, while Section~\ref{sect:quantumergodicity} focuses on microlocal Weyl laws and on the proof of Theorem~\ref{thm:main}.

Finally, Appendix~\ref{sect:functionalcalc} presents the semiclassical functional calculus for the subLaplacians.

\subsection{Aknowledgment} I warmly thank Clément Cren, Patrick Gérard, Emmanuel Giroux, Luc Hillairet, Véronique Fischer, Steven Flynn and Søren Mikkelsen for discussions related to various aspects of this paper. I also express my gratitude, in particular, to Clotilde Fermanian Kammerer for her constant interest and support during the realization of this work. This project was realized with the support of the Région Pays de la Loire via the Connect Talent Project HiFrAn 2022 07750, the grant ANR-23-CE40-0016 (OpART), and the France 2030 program, Centre Henri Lebesgue ANR-11-LABX-0020-01.

\section{Contact metric structures}
\label{sect:contactmanifolds}

This section is devoted to the definitions and main properties satisfied by contact manifolds and contact metric structures, and is largely inspired by \cite{Blair} where the reader will find the proofs of the statement discussed below.

\subsection{Contact manifolds} 

We begin with fundamental definitions and properties of contact manifolds.

\subsubsection{Generalities}
\label{subsubsect:generalitiescontact}

A \emph{contact manifold} is the data of a smooth manifold $M^{2d+1}$ together with a 1-form $\eta$ satisfying
\begin{equation*}
    \eta \wedge (d \eta)^d \neq 0.
\end{equation*}

The 1-form $\eta$ is called the \emph{contact form} and, in particular, the $(2d+1)$-form $\eta \wedge (d \eta)^d$ gives $M$ an orientation by defining a volume element. 
In what follows, we will denote by $\cD$ the \emph{contact distribution}, defined by 
\begin{equation*}
 \forall x\in M,\quad \cD_x := \{V \in T_x M\,:\,\eta_x(V)=0\}.
\end{equation*}

\begin{proposition}
 Let $(M^{2d+1},\eta)$ be a contact manifold. There exists a unique global vector field $R$, called the \emph{Reeb vector field}
 of the contact structure $\eta$, satisfying the following relations:
 \begin{equation*}
 \eta(R) = 1\quad\mbox{and}\quad d\eta(R,\cdot) = 0.
 \end{equation*}
\end{proposition}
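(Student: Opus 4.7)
My plan is to produce $R$ pointwise by linear algebra, exploiting the contact condition, and then to check that the construction glues into a smooth global section and is forced by the two defining relations.

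For existence, I will work fiberwise at an arbitrary point $x \in M$ and study the linear map
\begin{equation*}
 \iota_x : T_x M \longrightarrow T_x^*M, \quad V \longmapsto d\eta_x(V, \cdot).
\end{equation*}
Because $d\eta_x$ is an alternating $2$-form on an odd-dimensional space, $\ker \iota_x$ has odd dimension, hence is at least one-dimensional. The contact condition $\eta \wedge (d\eta)^d \neq 0$ implies $(d\eta)^d|_{\mathcal{D}_x} \neq 0$, so $d\eta_x|_{\mathcal{D}_x \times \mathcal{D}_x}$ is a symplectic form on the $2d$-dimensional subspace $\mathcal{D}_x$. In particular $\ker \iota_x \cap \mathcal{D}_x = \{0\}$, so the rank of $\iota_x$ is exactly $2d$ and $\ker \iota_x$ is one-dimensional and transverse to $\mathcal{D}_x$. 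Pick any nonzero $V_0 \in \ker \iota_x$; then $\eta_x(V_0) \neq 0$, and I set
\begin{equation*}
 R_x := \eta_x(V_0)^{-1}\, V_0,
\end{equation*}
which manifestly satisfies both $\eta_x(R_x) = 1$ and $d\eta_x(R_x, \cdot) = 0$.

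For uniqueness, I will suppose $R_1, R_2$ are two vectors satisfying these relations at $x$ and consider $W := R_1 - R_2$. Then $\eta_x(W) = 0$, so $W \in \mathcal{D}_x$, while $d\eta_x(W, \cdot) = 0$ and in particular $d\eta_x(W, Y) = 0$ for all $Y \in \mathcal{D}_x$. Non-degeneracy of $d\eta_x|_{\mathcal{D}_x}$ forces $W = 0$.

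Finally, to see $R$ is smooth and thus defines a global vector field, I will argue in a local chart: the equations $\eta(R) = 1$ and $d\eta(R,\cdot) = 0$ become a linear system $A(x) R(x) = b$, where $A(x)$ is the $(2d+1) \times (2d+1)$ matrix whose rows are the components of $\eta_x$ and of the $(2d+1)$ one-forms $d\eta_x(\partial_i, \cdot)$ (keeping $2d$ independent rows among the latter, or equivalently using the full $(\eta, d\eta)$ block). The contact condition ensures that this matrix is invertible, so Cramer's rule exhibits the components of $R$ as smooth functions of $x$. Gluing these local solutions over $M$ is automatic because of the pointwise uniqueness already established. I do not anticipate a serious obstacle; the only subtle point is the verification that the contact condition $\eta \wedge (d\eta)^d \neq 0$ is exactly what is needed both to control the dimension of $\ker \iota_x$ and to invert the matrix $A(x)$, which is where all the geometry enters.
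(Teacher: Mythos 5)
Your proof is correct and follows the standard linear-algebra argument for the existence and uniqueness of the Reeb field; the paper itself does not prove this proposition but defers to Blair's book, so there is no proof of record to compare against. The fiberwise argument is complete: the parity observation gives $\dim\ker\iota_x \geq 1$, the non-degeneracy of $d\eta_x|_{\cD_x}$ (which indeed follows from evaluating the top form $\eta\wedge(d\eta)^d$ on a basis adapted to $\cD_x$ and a transversal) gives $\ker\iota_x\cap\cD_x=\{0\}$ and hence $\dim\ker\iota_x = 1$, and the normalization and uniqueness steps are all sound. One small point in the smoothness step: the system $\eta(R)=1$, $d\eta(R,\partial_i)=0$ for $i=1,\dots,2d+1$ is $(2d+2)\times(2d+1)$, hence overdetermined, so Cramer's rule does not apply as stated; you should either note that linear independence of a chosen set of $2d$ rows among the $d\eta(\partial_i,\cdot)$ is an open condition (so a single choice works on a neighborhood and yields a smoothly varying invertible square system), or avoid row selection altogether by observing that $\Phi_x: V\mapsto \eta_x(V)\,\eta_x + d\eta_x(V,\cdot)$ is a smooth bundle isomorphism $TM\to T^*M$ (your fiberwise analysis shows it is injective) and $R$ is $\Phi^{-1}$ applied to the smooth section $\eta$.
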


A diffeomorphism $f$ between two contact manifolds $(M_1,\eta_1)$ and $(M_2,\eta_2)$ is called a \emph{contact transformation} if we have 
$f^* \eta_2 = \tau \eta_1$ for some non-vanishing function $\tau$. If the function $\tau$ is equal to 1, then one talks about a \emph{strict contact transformation}.
The next theorem indicates that any two contact structures can be sent on one another by a strict contact transformation locally (see, e.g., \cite[Section 6.2]{Mo02}).

\begin{theorem}[Darboux coordinates]
 \label{thm:darboux}
 In a neighborhood of any point of a contact manifold $(M^{2d+1},\eta)$, there exists a system of local coordinates $(x_1,\dots,x_d,y_1,\dots,y_d,r)$ with 
 respect to which we have 
 \begin{equation*}
 \eta = dr + \frac{1}{2}\sum_{i=1}^d (y_i dx_i - x_i dy_i).
 \end{equation*}
\end{theorem}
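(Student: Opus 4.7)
The approach I would take is Moser's path method, applied after a preliminary linear normalization at the given point.

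First, at a fixed point $p \in M$, I would use symplectic linear algebra on the hyperplane $\cD_p = \ker \eta_p$. Since the contact condition $\eta \wedge (d\eta)^d \neq 0$ forces $d\eta_p|_{\cD_p}$ to be non-degenerate, there exists a basis $(X_1,\dots,X_d,Y_1,\dots,Y_d)$ of $\cD_p$ such that $d\eta_p(X_i,Y_j)=\delta_{ij}$ and the other pairings vanish. Completing by $R_p$ and choosing a local chart $(x_1,\dots,x_d,y_1,\dots,y_d,r)$ centered at $p$ whose coordinate vector fields realize this basis at $p$, I obtain
$$\eta_p = dr|_p, \qquad (d\eta)_p = \sum_{i=1}^d dx_i\wedge dy_i.$$

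Next, I would introduce the target model
$$\eta_0 := dr - \tfrac{1}{2}\sum_{i=1}^d (y_i\, dx_i - x_i\, dy_i),$$
and observe, by direct computation, that $(\eta_0)_p = dr|_p$ and $(d\eta_0)_p = \sum_i dx_i\wedge dy_i$. Hence $\eta$ and $\eta_0$ agree at $p$ up to and including the first-order jet: $(\eta-\eta_0)_p = 0$ and $d(\eta-\eta_0)_p = 0$.

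The plan is then to interpolate $\eta_t := (1-t)\eta_0 + t\eta$, $t\in[0,1]$. By openness of the contact condition and compactness of $[0,1]$, there is a neighborhood of $p$ on which each $\eta_t$ is contact, with a smoothly varying Reeb vector field $R_t$. I would look for a time-dependent vector field $X_t$ vanishing at $p$, whose flow $\phi_t$ satisfies $\phi_t^*\eta_t = \eta_0$. Differentiating this relation at time $t$ gives the cohomological equation
$$\iota_{X_t} d\eta_t + d(\iota_{X_t}\eta_t) + (\eta-\eta_0) = 0.$$
Decomposing $X_t = g_t\, R_t + Y_t$ with $g_t = \eta_t(X_t)$ scalar and $Y_t \in \ker \eta_t$, and pairing against $R_t$ (which annihilates $d\eta_t$), one obtains a first-order ODE for $g_t$ along the Reeb flow with source $\iota_{R_t}(\eta_0-\eta)$, solvable locally. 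The residual equation $\iota_{Y_t} d\eta_t = \eta_0 - \eta - dg_t$ is then uniquely solved for $Y_t$ on $\cD_t := \ker \eta_t$ using the non-degeneracy of $d\eta_t|_{\cD_t}$.

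Since $\eta - \eta_0$ and its exterior derivative both vanish at $p$, the vector field $X_t$ vanishes at $p$, so its flow is defined on a fixed neighborhood of $p$ for all $t\in[0,1]$. The pullback by $\phi_1$ produces the desired local coordinates. The main technical obstacle I expect is the preliminary normalization, where one has to simultaneously arrange the correct linear symplectic behaviour of $d\eta_p|_{\cD_p}$, the alignment of the Reeb direction with $\partial_r$, and the vanishing of $d(\eta - \eta_0)$ at $p$; once this first-order matching is secured, the Moser deformation step is routine.
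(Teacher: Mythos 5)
Your Moser-trick argument is a correct and well-known proof of the contact Darboux theorem. Note that the paper does not prove this result itself: Section 2 explicitly defers to Blair's book \cite{Blair} for proofs, so there is no paper proof to compare against; the classical treatments in that reference proceed instead by straightening the Reeb vector field via the flow-box theorem and then invoking the symplectic Darboux theorem on a transversal, which is slightly more economical than a full Moser deformation of $\eta$.

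On the substance of your argument: the structure is sound. Two remarks. First, the phrase ``agree at $p$ up to and including the first-order jet'' overstates what you actually establish and use. The conditions $(\eta-\eta_0)_p=0$ and $d(\eta-\eta_0)_p=0$ do not control the full first derivative of $\eta-\eta_0$ (only its antisymmetrization), but they are exactly what is needed: the first guarantees $X_t(p)=0$ once $g_t$ is suitably normalized, and the second guarantees that $\eta_t\wedge(d\eta_t)^d$ is nonzero at $p$ for all $t\in[0,1]$, so that each $\eta_t$ is contact on a fixed neighborhood of $p$. Second, in solving the transport equation $R_t(g_t)=(\eta_0-\eta)(R_t)$ you should say explicitly that you impose $g_t\equiv 0$ on a hypersurface through $p$ transverse to every $R_t$ (such a hypersurface exists uniformly in $t$ by compactness of $[0,1]$ after shrinking the neighborhood); this choice forces $g_t(p)=0$ and, combined with $(\eta_0-\eta)(R_t)(p)=0$, also $(dg_t)_p=0$, which in turn forces $(Y_t)_p=0$ from the residual equation by non-degeneracy of $d\eta_t|_{\ker\eta_t}$. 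With these two points made precise, the proof is complete.
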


In such a Darboux coordinate system, the Reeb vector field is simply given by 
\begin{equation*}
 R = \partial_r.
\end{equation*}

An important geometric object associated with a contact manifold is its symplectification $\Sigma^{2d+2}$, also called its \emph{characteristic manifold}.

\begin{definition}
    \label{def:characteristic}
    Let $(M^{2d+1},\eta)$ be a contact manifold. We denote by $\Sigma\subset T^* M$ the annihilator $\cD^\perp\setminus o$ of the contact distribution $\cD = \ker \eta$, where $o$ denotes the zero section. It coincides with the cone of non-zero contact forms that defines $\cD$:
    \begin{equation*}
        \Sigma = \{(x,\lambda \eta_x)\in T^* M\,:\,x\in M,\,\lambda\in\R\setminus\{0\}\}.
    \end{equation*}
\end{definition}

The last expression of $\Sigma$ allows us to see it as a foliation parametrized by $\lambda$, in the sense that 
\begin{equation*}
    \Sigma = \bigcup_{\lambda\in\R\setminus\{0\}} \Sigma_\lambda,
\end{equation*}
where $\Sigma_\lambda = \{(x,\lambda\eta_x)\in T^* M\,:\,x\in M\}$. We have the following important result for the characteristic manifold. In what follows, we denote by $\omega$ the symplectic 2-form on $T^* M$.

\begin{proposition}[\cite{CdVHT18}]
    \label{prop:hamliftReeb}
    The restriction $\omega_{|\Sigma}$ defines a symplectic 2-form on $\Sigma$. Moreover, the Hamiltonian vector field $\Vec{\lambda}$ associated with the Hamiltonian $$ (x,\lambda\eta_x)\in\Sigma \mapsto \lambda\in\R,$$ leaves $\Sigma_\lambda$ invariant for $\lambda\in\R\setminus\{0\}$ and projects onto the Reeb vector field $R$.
\end{proposition}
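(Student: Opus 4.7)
The plan is to give both assertions a global form on $\Sigma$ and then reduce the verifications to the defining contact identities $\eta(R)=1$ and $\iota_R d\eta = 0$. The key preliminary step is to identify the restriction of the tautological $1$-form $\theta$ of $T^*M$ to $\Sigma$: denoting by $\pi:\Sigma\to M$ the base projection and by $\lambda:\Sigma\to\R^{*}$ the fiber coordinate appearing in the statement, the defining formula $\theta_{(x,\alpha)}=\alpha\circ d\pi$ immediately yields $\theta_{|\Sigma}=\lambda\,\pi^{*}\eta$, and hence
\[
\omega_{|\Sigma} \;=\; -d(\lambda\,\pi^{*}\eta) \;=\; -d\lambda\wedge\pi^{*}\eta \;-\; \lambda\,\pi^{*}d\eta.
\]
This identity can also be cross-checked directly in the Darboux coordinates of Theorem~\ref{thm:darboux}, where $\Sigma$ is cut out inside $T^{*}M$ by $\rho=\lambda$, $\xi_i=-\lambda y_i/2$, $\zeta_i=\lambda x_i/2$.

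For the symplectic property I would compute $(\omega_{|\Sigma})^{d+1}$ via the binomial formula. Two independent cancellations eliminate all but one term: $(d\lambda\wedge\pi^{*}\eta)^{2}=0$ because $d\lambda$ appears twice, and $\pi^{*}(d\eta)^{d+1}=0$ because $\dim M=2d+1$. Only the cross term survives, proportional to $\lambda^{d}\, d\lambda\wedge\pi^{*}\bigl(\eta\wedge(d\eta)^{d}\bigr)$. The contact condition makes $\eta\wedge(d\eta)^{d}$ a volume form on $M$ and $\lambda$ never vanishes on $\Sigma$, so this is a volume form on the $(2d+2)$-dimensional manifold $\Sigma$. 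Non-degeneracy follows, and closedness is inherited from $\omega$.

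For the Hamiltonian claim, the natural candidate for $\vec{\lambda}$ is the lift $\widetilde R$ of the Reeb vector field characterized by $\widetilde R\cdot\lambda=0$ and $\pi_{*}\widetilde R=R$, which in Darboux coordinates is simply $\partial_{r}$. Contracting $\widetilde R$ against the global formula for $\omega_{|\Sigma}$ and using $\eta(R)=1$ together with $\iota_{R}d\eta=0$ immediately produces $\iota_{\widetilde R}\,\omega_{|\Sigma} = d\lambda$, identifying $\widetilde R$ with $\vec{\lambda}$ (up to the sign convention $\iota_{\vec H}\omega = dH$). Invariance of each leaf $\Sigma_{\lambda}$ is then the identity $\widetilde R\cdot\lambda=0$, and the projection claim is built into the definition of $\widetilde R$.

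The only step that is not purely mechanical is the global identification $\theta_{|\Sigma}=\lambda\,\pi^{*}\eta$, which recognizes $\Sigma$ as the total space of the symplectic cone over the contact manifold; once this is in hand, everything else reduces to a short contraction against a contact identity.
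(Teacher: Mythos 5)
Your proof is correct and follows essentially the same computational route that the paper itself sketches immediately after stating the proposition: identify $\theta_{|\Sigma}=\lambda\,\pi^*\eta$, differentiate to get $\omega_{|\Sigma}=-(d\lambda\wedge\pi^*\eta+\lambda\,\pi^*d\eta)$, and then the top power collapses to the cross term $\propto\lambda^d\,d\lambda\wedge\pi^*(\eta\wedge(d\eta)^d)$, nonvanishing by the contact condition and $\lambda\neq 0$. The paper does not spell out the Hamiltonian part (it defers to Blair), but your contraction argument $\iota_{\widetilde R}\,\omega_{|\Sigma}=d\lambda$ using $\eta(R)=1$ and $\iota_R d\eta=0$ is exactly what is needed; the invariance of $\Sigma_\lambda$ is then the trivial statement that $\vec\lambda$ kills $\lambda$, and the projection statement is built into your choice of lift. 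The only caveat, which you already flag, is the sign convention $\iota_{\vec H}\omega=dH$ vs.\ $\iota_{\vec H}\omega=-dH$: with the opposite convention $\vec\lambda$ would project onto $-R$, so the paper's normalization of $\omega$ on $T^*M$ is the one making your computation match the statement as written. Nothing is missing.
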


Using the parameterization $j:(x,\lambda)\in M\times \R\setminus\{0\}\mapsto (x,\lambda\eta_x)\in\Sigma$, the restricted symplectic form $\omega_{|\Sigma}$ is given by 
\begin{equation*}
    \omega_{|\Sigma} = j^*(\omega) = -(d\lambda \wedge \eta + \lambda d\eta).
\end{equation*}
In particular, the associated symplectic volume form is given by 
\begin{equation}
    \label{eq:symplecticvolume}
    \omega_{|\Sigma}^{d+1} = (-1)^{d+1} \lambda^{d}\,d\lambda\wedge \eta \wedge (d\eta)^d.
\end{equation}

\subsubsection{Heisenberg group}
\label{subsubsect:heisenberg}

The contact structure on $\R^{2d+1}$ given by the 1-form described in Theorem~\ref{thm:darboux} will be identified with that of the Heisenberg group of dimension $2d+1$.
Indeed, we introduce the Lie algebra $\fh^d$, called the \emph{Heisenberg Lie algebra}, generated by the vector fields 
\begin{equation*}
 \tilde{X}_1,\dots,\tilde{X}_d,\tilde{Y}_1,\dots,\tilde{Y}_d,R,
\end{equation*}
satisfying the following commutator relations: for all $i,j\in\{1,\dots,d\}$, 
\begin{equation}
\label{eq:heisenbergrelations}
 [\tilde{X}_i,\tilde{Y}_j] = \delta_{i,j}R\quad\mbox{and}\quad [\tilde{X}_i,\tilde{X}_j] = [\tilde{Y}_i,\tilde{Y}_j] = 0,
\end{equation}
as well as any commutator with $R$ set to 0. We introduce the notation 
\begin{equation*}
 \fv = {\rm span}(\tilde{X}_1,\dots,\tilde{X}_d,\tilde{Y}_1,\dots,\tilde{Y}_d)\quad\mbox{and}\quad \fr = {\rm span}(R),
\end{equation*}
as well as $\mathfrak{p} = {\rm span}(\tilde{X}_1,\dots,\tilde{X}_d)$ and $\mathfrak{q} = {\rm span}(\tilde{Y}_1,\dots,\tilde{Y}_d)$. Then, we have the following gradation of $\fh^d$, adapted to the Lie bracket,
\begin{equation}
\label{eq:stratificationHeis}
 \fh^d = \fv \oplus \fr.
\end{equation}

We denote by $\Heis^d$ the following realization on $\R^{2d+1}$ of the \emph{Heisenberg group}, i.e.~the unique (up to isomorphism) connected simply connected nilpotent Lie group
with Lie algebra given by $\fh^d$:
\begin{equation}
 \label{eq:vectorfields}
 \tilde{X}_i = \partial_{x_i} - \frac{y_i}{2} \partial_r,\quad \tilde{Y}_i = \partial_{y_i} + \frac{x_i}{2}\partial_r\quad\mbox{and}\quad R =\partial_r.
\end{equation}
This corresponds to realizing the Heisenberg group on the Heisenberg Lie algebra $\fh$ through the exponential map 
\begin{equation*}
 \Exp_{\Heis^d} : \fh^d \rightarrow \Heis^d.
\end{equation*}
Indeed, for $h_1,h_2 \in \Heis^d$, writing $h_1 = \Exp(V_1 + R_1)$ and $h_2 = \Exp(V_2 + R_2)$, with $V_i\in\fv$ and $R_i\in\fr$, $i=1,2$, we have by the Baker-Campbell-Hausdorff formula
\begin{equation*}
 h_1 * h_2 = \Exp_{\Heis^d}(V + R),\quad\mbox{with}\ V = V_1 + V_2\in\fv\ \mbox{and}\ R = R_1 + R_2 + \frac{1}{2}[V_1,V_2]\in\fr,
\end{equation*}
as well as $h^{-1} = \Exp_{\Heis^d}(-V-R)$. 

In anticipation of the next sections, we endow the Heisenberg group with the Riemannian metric $g_\Heis$ defined by setting the left-invariant vector fields $(\tilde{X}_1,\dots,\tilde{X}_d,\tilde{Y}_1,\dots,\tilde{Y}_d,R)$ as a global orthonormal
frame. The corresponding \emph{Haar measure} is then simply the Lebesgue measure in the coordinates $(x_i,y_i,r)$, and we denote it by $\mu_\Heis$.

\subsubsection{Principal circle bundles}
\label{subsubsect:princcirclebundles}

A particular family of contact manifolds that will interest us in the applications is given by the principal circle bundles. We recall here elementary facts about them.

Let $P$ and $M$ be smooth manifolds, and $\pr: P\rightarrow M$ a smooth map of $P$ onto $M$. Then $(P,M)$ is called a \emph{principal circle bundle} if
\begin{itemize}
    \item[(i)] $\bS$ has a free right-action on $P$;
    \item[(ii)] for all $p_1,p_2\in P$, we have $\pr(p_1) = \pr(p_2)$ if and only if there exists $\theta \in \bS$ such that $p_1 = p_2 \cdot \theta$;
    \item[(iii)] $P$ is locally trivial over $M$, with local trivializations that commute with the action of $\bS$.
\end{itemize}

Thus, on any principal circle bundle $(P,M)$, one can consider the vector field $\partial_\theta$ given by the action of $\bS$ on $P$:
\begin{equation*}
    \forall p \in P,\quad \partial_\theta(p) = \frac{d}{dt}_{|t=0}(p\cdot e^{it}).
\end{equation*}

We will denote by $\mathcal{P}(M,\bS)$ the set of all principal circle bundles over $M$. It is a fundamental result of the theory of principal circle bundles that this set can be given the structure of an abelian group with the existence of an isomorphism
\begin{equation*}
    \mathcal{P}(M,\bS) \cong H^2(M,\Z),
\end{equation*}
i.e.~with the second group of cohomology of $M$ with integer coefficients, which we discuss now.

A \emph{connection} on a principal circle bundle $(P,M)$ is the datum of a smooth distribution $H\subset TP$ invariant by the action of $\bS$ and which gives the following decomposition of the tangent space:
\begin{equation*}
    \forall p\in P,\quad T_p P = H_p\oplus V_p,\quad\mbox{where}\ V = \ker \pr_{*},
\end{equation*}
and $V$ is called the \emph{vertical bundle}. One can then define in a unique way a non-zero 1-form $\eta$ whose kernel is given by the distribution $H$ and by $\eta(\partial_\theta)=1$, called the \emph{connection form}.

One can then consider the 2-form $\Phi = d\eta$, called the \emph{curvature form}, which is invariant by the action of $\bS$. Therefore, there exists a unique 2-form $\Omega$ on $M$ such that 
\begin{equation*}
    \Phi = \pr^* \Omega,
\end{equation*}
where $\pr^*$ denotes the pullback by the projection map $\pr : P\rightarrow M$.
One can show that the cohomology class of $\Omega$ is independent of the choice of connection on $P$, called the \emph{characteristic class} of $P$.

A fundamental result regarding principal circle bundles is the following one (see \cite[Theorem 2.5]{Blair}).

\begin{theorem}
\label{thm:existenceprincipalbundle}
    Let $\Omega$ be a 2-form on $M$ representing an element of $H^2(M,\Z)$. Then there exists a principal bundle $P$ and a connection form $\eta$ on $P$ such that $d\eta = \pr^* \Omega$.
\end{theorem}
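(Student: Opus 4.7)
My plan is to combine the classical correspondence $\mathcal{P}(M,\bS) \cong H^2(M,\Z)$ with a \v{C}ech--de Rham construction of a compatible connection. The key input is that the integrality of $[\Omega]\in H^2(M,\R)$ as a class in $H^2(M,\Z)$ allows one to descend from a closed real $2$-form to $\bS$-valued transition functions.

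First, I would fix a good cover $\{U_\alpha\}_{\alpha\in A}$ of $M$, i.e.\ one whose finite intersections are contractible. By the Poincar\'e lemma, on each $U_\alpha$ we can write $\Omega_{|U_\alpha} = d\eta_\alpha$ for some $1$-form $\eta_\alpha$, and on double overlaps $\eta_\alpha - \eta_\beta = d f_{\alpha\beta}$ for some function $f_{\alpha\beta}$ on $U_\alpha\cap U_\beta$. On triple overlaps, the quantities $c_{\alpha\beta\gamma} := f_{\alpha\beta}+f_{\beta\gamma}+f_{\gamma\alpha}$ are locally constant, and the integrality hypothesis on $[\Omega]$ means that, after rescaling by $2\pi$, they can be chosen so that $c_{\alpha\beta\gamma}\in 2\pi\Z$. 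The functions $g_{\alpha\beta} := \exp(i f_{\alpha\beta}):U_\alpha\cap U_\beta\rightarrow \bS$ then satisfy the cocycle condition $g_{\alpha\beta}g_{\beta\gamma}g_{\gamma\alpha}=1$, and I would define $P$ as the principal $\bS$-bundle over $M$ obtained by gluing the charts $U_\alpha\times\bS$ via these transition functions.

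Next, I would build the connection form $\eta$ on $P$. On each trivializing chart $U_\alpha\times\bS$, set
\begin{equation*}
    \eta^{(\alpha)} = \pr^*\eta_\alpha + d\theta_\alpha,
\end{equation*}
where $\theta_\alpha$ is the angular coordinate on the $\bS$-factor. The relation $\eta_\alpha - \eta_\beta = d f_{\alpha\beta}$, together with the fact that the transition functions act on the $\bS$-factor by $\theta_\beta = \theta_\alpha + f_{\alpha\beta}\pmod{2\pi}$, ensures that the local forms $\eta^{(\alpha)}$ agree on overlaps and thus define a global $1$-form $\eta$ on $P$. By construction $\eta(\partial_\theta)=1$, $\eta$ is $\bS$-invariant, and differentiating yields $d\eta = \pr^* d\eta_\alpha = \pr^*\Omega$ on each chart, which globalizes.

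The main obstacle is the passage from the real de Rham representative $\Omega$ to the $\bS$-valued cocycle $(g_{\alpha\beta})$: this is exactly where one must exploit that $[\Omega]$ lies in the image of $H^2(M,\Z)\rightarrow H^2(M,\R)$ to arrange $c_{\alpha\beta\gamma}\in 2\pi\Z$, which in turn is the classical outcome of chasing the long exact sequence induced by the short exact sequence $0\rightarrow\Z\rightarrow\R\xrightarrow{\exp(i\cdot)}\bS\rightarrow 0$. Once this cohomological step is established, the construction of $P$ and of the connection form $\eta$ are straightforward gluing arguments.
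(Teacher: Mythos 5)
Your proof is correct and is the standard Čech--de~Rham (``prequantization'') construction, whereas the paper simply states Theorem~\ref{thm:existenceprincipalbundle} as a classical fact (Boothby--Wang, Kobayashi) without supplying a proof, referring implicitly to the literature cited in Section~\ref{subsubsect:princcirclebundles}. The three ingredients you use --- locally solving $\Omega = d\eta_\alpha$ and $\eta_\alpha - \eta_\beta = df_{\alpha\beta}$ on a good cover, using the integrality of $[\Omega]$ together with the long exact sequence of $0\to\Z\to\R\to\bS\to 0$ to adjust $(f_{\alpha\beta})$ so that the Čech $2$-cocycle $(c_{\alpha\beta\gamma})$ takes values in $2\pi\Z$, and then gluing $U_\alpha\times\bS$ via $g_{\alpha\beta}=\exp(if_{\alpha\beta})$ while patching $\eta^{(\alpha)} = \pr^*\eta_\alpha + d\theta_\alpha$ --- are exactly right. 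Two small points worth making explicit: (i) the $2\pi$ normalization depends on whether one identifies $\bS$ with $\R/\Z$ or $\R/2\pi\Z$ and on whether the paper's ``integral class'' means $[\Omega]$ or $[\Omega/2\pi]$ lies in the image of $H^2(M,\Z)$, so one should state the convention being used; and (ii) after choosing the constant $1$-cochain $(b_{\alpha\beta})$ that makes $c_{\alpha\beta\gamma}+b_{\alpha\beta}+b_{\beta\gamma}+b_{\gamma\alpha}\in 2\pi\Z$, one replaces $f_{\alpha\beta}$ by $f_{\alpha\beta}+b_{\alpha\beta}$, which still satisfies $df_{\alpha\beta}=\eta_\alpha-\eta_\beta$ since the $b_{\alpha\beta}$ are constants, and one should also impose the antisymmetry $f_{\beta\alpha}=-f_{\alpha\beta}$ so that $g_{\beta\alpha}=g_{\alpha\beta}^{-1}$. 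Once these conventions are fixed, the relation $\theta_\alpha = \theta_\beta - f_{\alpha\beta}$ (or $\theta_\beta = \theta_\alpha + f_{\alpha\beta}$, as you wrote) makes $d\theta_\alpha - d\theta_\beta = -\pr^*df_{\alpha\beta}$ cancel exactly against $\pr^*\eta_\alpha - \pr^*\eta_\beta = \pr^*df_{\alpha\beta}$, and the verifications that $\eta(\partial_\theta)=1$, that $\eta$ is $\bS$-invariant, and that $d\eta = \pr^*\Omega$ all go through as you say.
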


Moreover, one observes that if $\Omega$ is symplectic, then $(P,\eta)$ is a contact manifold. This type of contact manifold is sometimes called a \emph{Boothby-Wang fibration}.

\subsection{Associated metrics}
\label{subsect:associatedmetrics}

We now present here the class of metrics that we will consider.

\subsubsection{Almost contact structures and compatible metrics}
\label{subsubsect:almostcontactstruct}

In order to do so, we first begin by enlarging our geometric setting by defining almost complex structures on contact distributions.

\begin{definition}
 \label{def:almostcontactstruct}
 An odd-dimensional smooth manifold $M^{2d+1}$ is said to have an \emph{almost contact structure} if it admits a tensor 
 field $\phi$ of type $(1,1)$, a vector field $R$, and a 1-form $\eta$ satisfying
 \begin{equation*}
 \phi^2 = -I+ \eta\otimes R\quad\mbox{and}\quad \eta(R)=1.
 \end{equation*}
\end{definition}

We now begin to define the class of Riemannian metrics that we will work with in the remainder of this paper.

\begin{definition}
 \label{def:almostcontactmetricstruct}
 Let $(M,\phi,R,\eta)$ be an almost contact structure. A Riemannian metric $g$ on $M$ is said to 
 be \emph{compatible} if it satisfies the relation 
 \begin{equation}
 \label{eq:compatiblemetric}
 \forall X,Y\in \Gamma(T M),\quad g(\phi X,\phi Y) = g(X,Y)-\eta(X)\eta(Y).
 \end{equation}
 The manifold $M$ is then said to be endowed with an \emph{almost contact metric structure}.
\end{definition}

\begin{remark}
We observe that for such a manifold, we immediately have
\begin{equation*}
 \eta(X) = g(X,R),
\end{equation*}
and thus $R$ is a unit vector field.
\end{remark}

In any (sufficiently small) open subset in a manifold $M^{2d+1}$ with an almost contact metric structure $(\phi,R,\eta,g)$, we can
exhibit useful orthonormal frames that we now describe. Let $U\subset M$ be a coordinate neighborhood and let $X_1$ be any unit (for the metric $g$) vector field on $U$
orthogonal to $R$. We then define the vector field $Y_1$ on $U$ by 
\begin{equation*}
 Y_1 = \phi (X_1).
\end{equation*}
By the compatibility relation given by Equation~\eqref{eq:compatiblemetric}, $Y_1$ is a unit vector field orthogonal to both $X_1$ and $R$.
We now choose a unit vector field $X_2$ orthogonal to $R$, $X_1$ and $Y_1$. Similarly to before, we set $Y_2 = \phi (X_2)$, which is a unit vector field orthogonal to $R$, $X_1$, $Y_1$ and $X_2$. Proceeding recursively, we obtain a local orthonormal frame 
\begin{equation*}
 \bX = (X_1,\dots,X_d,Y_1,\dots,Y_d,R),
\end{equation*}
which we call a \emph{$\phi$-frame}.

Using a $\phi$-frame to trivialize the tangent bundle, the $(1,1)$-tensor field $\phi$ is then given by the matrix 
\begin{equation*}
 \begin{pmatrix}
 0 & -I_d & 0\\
 I_d & 0 & 0\\
 0 & 0 & 0
 \end{pmatrix}.
\end{equation*}

\subsubsection{Contact metric structures}

Considering a contact manifold $(M,\eta)$ with the Reeb vector field denoted by $R$, one can ask if it is possible to find a $(1,1)$-tensor field $\phi$ and a metric $g$ such that $(M,\phi,R,\eta,g)$ defines an almost contact metric structure. The answer is given by the following important result, which is a restatement of \cite[Theorem 4.4]{Blair}.

\begin{theorem}
 \label{thm:existencecontactmetric}
 Let $(M^{2d+1},\eta)$ be a contact manifold and $R$ its associated Reeb vector field. Then there exist a $(1,1)$-tensor field $\phi$ and a metric $g$ 
 such that $(M,\phi,R,\eta,g)$ is an almost contact metric structure and, moreover, satisfying  the relation 
 \begin{equation*}
 \forall X,Y \in \Gamma(TM),\quad g(X,\phi Y) = d\eta(X,Y).
 \end{equation*}
\end{theorem}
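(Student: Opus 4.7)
The plan is to construct $\phi$ and $g$ explicitly by leveraging the fact that $d\eta$ restricts to a symplectic form on the contact distribution $\cD = \ker\eta$, and then to perform a polar decomposition to extract an almost complex structure compatible with an auxiliary metric.

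First, I would note that the contact condition $\eta\wedge(d\eta)^d \neq 0$ is equivalent to saying that $d\eta|_{\cD}$ is a non-degenerate (hence symplectic) 2-form on the rank-$2d$ distribution $\cD$. Next, I would pick \emph{any} auxiliary Riemannian metric $g_0$ on $M$, and modify it so that $R$ is orthogonal to $\cD$ with $g_0(R,R)=1$; for instance by setting $g_0(X,Y) := g_0'(pX,pY) + \eta(X)\eta(Y)$, where $g_0'$ is any starting metric and $p = \mathrm{Id} - \eta\otimes R$ is the projection onto $\cD$ along $R$. With such a $g_0$, the identity $\eta(X) = g_0(X,R)$ holds by construction.

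Then, on the sub-bundle $\cD$, I define a bundle endomorphism $A\in\mathrm{End}(\cD)$ by the identity $d\eta(X,Y) = g_0(X,AY)$ for all $X,Y\in\Gamma(\cD)$. Since $d\eta$ is skew-symmetric and non-degenerate on $\cD$, $A$ is skew-symmetric for $g_0$ and invertible; in particular $-A^2$ is symmetric positive definite. Setting $B := \sqrt{-A^2}$ (the unique symmetric positive square root, which is smooth as a function of $A$ since $-A^2$ is positive definite), the operators $A$ and $B$ commute, and I define
\begin{equation*}
    \phi := B^{-1} A \ \ \text{on}\ \cD, \qquad \phi(R) := 0.
\end{equation*}
A short computation gives $\phi^2 = B^{-2}A^2 = -\mathrm{Id}_\cD$, hence $\phi^2 = -\mathrm{Id} + \eta\otimes R$ on all of $TM$. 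The new metric is then defined by
\begin{equation*}
    g(X,Y) := g_0(BX,Y)\ \ \text{on}\ \cD, \qquad g(X,R) := \eta(X),
\end{equation*}
extended by bilinearity. Symmetry uses the symmetry of $B$, and positivity of $g$ on $\cD$ follows from positivity of $B$.

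Finally, I would check the four required identities: $\phi^2 = -I+\eta\otimes R$ and $\eta(R)=1$ are immediate; the crucial relation $g(X,\phi Y) = d\eta(X,Y)$ reduces on $\cD$ to $g_0(BX,\phi Y) = g_0(X,B\phi Y) = g_0(X,AY)$, and trivially holds when $Y=R$ since both sides vanish; the compatibility $g(\phi X,\phi Y) = g(X,Y) - \eta(X)\eta(Y)$ boils down, on $\cD$, to $g_0(B\phi X,\phi Y) = g_0(A\phi X, Y) \cdot(\pm) = g_0(BX,Y)$, using skew-symmetry of $A$ and $\phi^2 = -I$, and is tautological on $R$.

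The main obstacle is the polar decomposition step: one must verify that the square root $B = \sqrt{-A^2}$ depends smoothly on $A$ (which is guaranteed by strict positivity of $-A^2$, via the holomorphic functional calculus of symmetric matrices), that $B$ commutes with $A$ (from $B$ being a polynomial in $-A^2$), and that the resulting $\phi$ is globally defined as a smooth $(1,1)$-tensor on $M$. Once these points are secured, the verification of the compatibility identities is essentially linear-algebraic.
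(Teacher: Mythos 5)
Your proof is correct and follows exactly the standard polar-decomposition argument found in Blair's book, which the paper explicitly cites in place of supplying its own proof of this theorem. The only cosmetic imprecision is describing $B=\sqrt{-A^2}$ as ``a polynomial in $-A^2$'' — it is a smooth spectral function of $-A^2$ (pointwise a polynomial, with coefficients varying with the base point), but the commutation $[A,B]=0$ and the smoothness of $x\mapsto B_x$ follow as you say, so the argument stands.
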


The previous theorem tells us that any contact manifold can be endowed with a contact metric structure in the sense of the following definition.

\begin{definition}
 \label{def:contactmetricstruct}
 If $(M^{2d+1},\phi,R,\eta,g)$ is an almost contact structure for which the relation 
 \begin{equation}
 \label{eq:associatedmetric}
 \forall X,Y \in \Gamma(TM),\quad g(X,\phi Y) = d\eta(X,Y),
 \end{equation}
 holds, then the metric $g$ is said to be an \emph{associated metric} and the manifold $M$ is said to be endowed with a \emph{contact metric structure}.
\end{definition}

Equivalently, given a contact manifold $(M^{2d+1},\eta)$ with Reeb vector field $R$, a Riemannian metric $g$ is an associated metric if, firstly,
\begin{equation*}
 \eta(X) = g(X,R),
\end{equation*}
and secondly, there exists a $(1,1)$-tensor field $\phi$ such that 
\begin{equation}
 \label{eq:relassociatedmetric}
 \phi^2 = -I+\eta\otimes R\quad\mbox{and}\quad d\eta(X,Y) = g(X,\phi Y).
\end{equation}
Since we are more interested in the metric $g$ than in the tensor field $\phi$, we will often simply write $(M^{2d+1},\eta,g)$ for a contact metric structure,
the existence of $\phi$ satisfying Equation~\eqref{eq:relassociatedmetric} being implicit.

Finally, we note that the volume form of any associated metric is determined by the contact form $\eta$, as stated below (see also \cite[Theorem 4.6]{Blair}).

\begin{proposition}
 Let $(M^{2d+1},\eta)$ be a contact manifold and $g$ be an associated metric. Then
 \begin{equation*}
 d{\rm vol}_g = \frac{(-1)^d}{2^d d!} \eta \wedge (d\eta)^d.
 \end{equation*}
\end{proposition}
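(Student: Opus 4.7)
The plan is to verify the identity pointwise by working in a local $\phi$-frame, in which both sides become concrete $(2d+1)$-forms that can be compared directly. On a coordinate neighborhood $U\subset M$, I would fix a $\phi$-frame $\bX = (X_1,\dots,X_d,Y_1,\dots,Y_d,R)$ as constructed in Section~\ref{subsubsect:almostcontactstruct}. Since this frame is $g$-orthonormal, the Riemannian volume form is, up to orientation, $d{\rm vol}_g = X_1^*\wedge\cdots\wedge X_d^*\wedge Y_1^*\wedge\cdots\wedge Y_d^*\wedge R^*$. Moreover, the identity $\eta(\cdot) = g(\cdot,R)$ combined with the orthonormality of the frame immediately identifies $\eta$ with the dual covector $R^*$.

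The heart of the argument is to evaluate $(d\eta)^d$ in this coframe. Using the defining relation $d\eta(X,Y) = g(X,\phi Y)$ together with $\phi X_i = Y_i$ and $\phi Y_i = \phi^2 X_i = -X_i + \eta(X_i)R = -X_i$, I would compute
\begin{equation*}
    d\eta(X_i,Y_j) = -\delta_{ij}, \qquad d\eta(X_i,X_j) = d\eta(Y_i,Y_j) = 0,
\end{equation*}
while the Reeb identities $d\eta(R,\cdot) = 0$ rule out any component containing $R^*$. Writing $\omega_i := X_i^*\wedge Y_i^*$, these 2-forms pairwise commute under the wedge product and satisfy $\omega_i\wedge\omega_i = 0$, so the multinomial expansion of $\bigl(\sum_i\omega_i\bigr)^d$ collapses to the single multi-index $(1,\dots,1)$ carrying the coefficient $d!$. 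This yields, up to the overall constant fixed by the adopted convention for the exterior derivative of a 1-form,
\begin{equation*}
    (d\eta)^d \; = \; (-1)^d\,d!\,X_1^*\wedge Y_1^*\wedge\cdots\wedge X_d^*\wedge Y_d^*.
\end{equation*}

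Finally, I would wedge with $\eta = R^*$, transport $R^*$ into the last slot, and rearrange the dual basis into the block order $(X_1^*,\dots,X_d^*,Y_1^*,\dots,Y_d^*)$; each of these operations produces an explicit sign. Comparing the result with the expression of $d{\rm vol}_g$ above then yields the stated identity. The only nontrivial task is the bookkeeping of signs and normalization: tracking the sign $(-1)^d$ carried by $(d\eta)^d$, the sign $(-1)^{d(d-1)/2}$ coming from the permutation rearranging the $X_i^*$'s and $Y_j^*$'s into block order, and the power $2^d$ which, in the setting of \cite{Blair}, traces back to the Kobayashi--Nomizu normalization of the exterior derivative used throughout that reference.
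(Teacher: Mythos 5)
Your strategy---evaluating both sides pointwise in a $\phi$-coframe and expanding $(d\eta)^d$ multinomially---is the standard argument, and since the paper itself gives no proof (it defers to \cite{Blair}) this is essentially the argument it has in mind. The pointwise identities $d\eta(X_i,Y_j)=-\delta_{ij}$, $d\eta(X_i,X_j)=d\eta(Y_i,Y_j)=0$, $d\eta(R,\cdot)=0$, and the collapse of $\bigl(\sum_i\omega_i\bigr)^d$ to $d!\,\omega_1\wedge\cdots\wedge\omega_d$ are all sound.

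Where the argument fails to close is the final sign bookkeeping. You list three contributing factors---$(-1)^d$ from $(d\eta)^d$, $(-1)^{d(d-1)/2}$ from rearranging $X_1^*\wedge Y_1^*\wedge\cdots\wedge X_d^*\wedge Y_d^*$ into block order, and $2^d$ from the convention---but their product is $(-1)^{d(d+1)/2}/2^d$, which is not $(-1)^d/2^d$ in general (take $d=2$). The resolution is that the constant $\frac{(-1)^d}{2^d d!}$ corresponds to writing $d{\rm vol}_g$ in the \emph{interleaved} order $X_1^*\wedge Y_1^*\wedge\cdots\wedge X_d^*\wedge Y_d^*\wedge R^*$, which is exactly the order the multinomial expansion already produces; the block reordering and its sign $(-1)^{d(d-1)/2}$ are spurious and should simply not appear. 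A secondary imprecision: the $2^d$ is really an artifact of the wedge-product normalization rather than of the exterior derivative per se. From $d\eta(X_i,Y_j)=-\delta_{ij}$ one gets $d\eta=-\sum_i X_i^*\wedge Y_i^*$ under the determinant convention $(\alpha\wedge\beta)(X,Y)=\alpha(X)\beta(Y)-\alpha(Y)\beta(X)$, but $d\eta=-2\sum_i X_i^*\wedge Y_i^*$ under the alternation convention $(\alpha\wedge\beta)(X,Y)=\frac{1}{2}\bigl[\alpha(X)\beta(Y)-\alpha(Y)\beta(X)\bigr]$, whence the $2^d$ after raising to the $d$-th power. Since the paper's own formula $d\eta(X,Y)=-\eta([X,Y])$ for $X,Y\in\cD$ (Section~\ref{subsubsect:phiframe}) corresponds to the first of these conventions, the $2^d$ in the displayed constant is inherited verbatim from \cite{Blair} rather than deduced from the normalizations the paper uses elsewhere; you were right to flag the constant as convention-dependent, but it is the $\wedge$ normalization, not the $d$ normalization, that actually enters the computation here.
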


\begin{remark}
    The last proposition also means that the volume form coincides, up to a multiplicative constant, with the Haar measure on $\Heis^d$ in any Darboux coordinates. This observation then makes it obvious that this volume form is invariant by the Reeb flow, i.e., the flow generated by the vector field $R$.
\end{remark}

In what follows, we shall drop the index $g$ when talking about the volume form and simply write ${\rm vol}$.

\subsubsection{Symplectic normal bundle}
\label{subsubsect:symnormbundle}

Another crucial geometric object associated with the characteristic manifold of a contact manifold (see Definition~\ref{def:characteristic}) is its symplectic orthogonal bundle. Indeed, since $ \Sigma$  is symplectic, for every $\sigma\in\Sigma$, we have the symplectic orthogonal decomposition
\begin{equation}\label{symplecticorthogonalsplitting}
T_\sigma(T^* M) = T_\sigma \Sigma \oplus \mathrm{orth}_\omega(T_\sigma \Sigma) ,
\end{equation}
where the notation $\mathrm{orth}_\omega$ stands for the symplectic $\omega$-orthogonal complement.
We define the symplectic normal bundle $N\Sigma$ of $\Sigma$ by
$$
N\Sigma = \{ (\sigma,w)\,:\, \sigma\in\Sigma,\ w\in
N_\sigma\Sigma \} \,\subset\, T(T^* M),
$$
where
$N_\sigma\Sigma = \mathrm{orth}_\omega(T_\sigma \Sigma) $.
The set $N\Sigma$ is a vector bundle over $\Sigma$ and the manifold $\Sigma$ is canonically embedded in $N\Sigma$, by the zero section.  

Considering a local $\phi$-frame $\bX = (X_1,\dots,X_d,Y_1,\dots,Y_d,R)$ in $U$, and denoting by $h_V$ the hamiltonian on $T^*M$ associated to any vector field $V\in \Gamma(M)$, we have
\begin{multline*}
T\Sigma_U =\bigcap_{1\leq i \leq d} \ker dh_{X_i}\cap \ker dh_{Y_i} 
= \{ w\in T(T^*_U M)\,:\,\forall i \in\{1,\dots,d\},\, dh_{X_i}.w = dh_{Y_i}.w = 0 \}  \\
= \{ w\in T(T^*_U M) \,:\,\forall i \in\{1,\dots,d\},\, \omega(\vec h_{X_i},w) = \omega(\vec h_{Y_i},w) = 0 \},
\end{multline*}
and thus
\begin{equation*}%\label{Tsigmaperp}
N_\sigma\Sigma = {\rm span}\left( \vec h_{X_1}(\sigma),\dots,\vec h_{X_d}(\sigma), \vec h_{Y_1}(\sigma),\dots,\vec h_{Y_d}(\sigma)\right),
\end{equation*}
along $\Sigma$.

Denoting by $\pi:T^* M\rightarrow M$ the canonical projection, we have, along $\Sigma$, $d\pi(\vec h_{X_i}) = X_i\circ\pi$ and $d\pi(\vec h_{Y_i}) = Y_i\circ\pi$ for $1\leq i\leq d$, and
thus $d\pi_\sigma(N_\sigma\Sigma)  = \cD_{\pi(\sigma)}$. In other words, for every $\sigma\in\Sigma$, the mapping
\begin{equation}
\label{eq:defXi}
\Xi_\sigma := (d\pi_\sigma)_{\vert N_\sigma\Sigma}^{\vert \cD_{\pi(\sigma)}} : N_\sigma\Sigma\rightarrow \cD_{\pi(\sigma)},
\end{equation}
is an isomorphism. 

% Recalling that $\cD=\ker \eta$
% and that $(-d\eta_{\vert \cD})^d$ is an oriented volume form on $\cD$, we transport this volume on the fibers
% $N_\sigma\Sigma $ by pullback under $\Xi$.

We now define $\omega_{N\Sigma}$ on the bundle $N\Sigma$ as follows: given any point $\sigma\in\Sigma$, we set 
$$
\omega_\sigma^{N\Sigma} = (\omega_\sigma)_{\vert N_\sigma\Sigma} .
$$
Since $\omega_\sigma^{N\Sigma}(\vec h_{X_i}(\sigma),\vec h_{Y_j}(\sigma)) = \omega_\sigma(\vec h_{X_i}(\sigma),\vec h_{Y_j}(\sigma))=\{h_{X_i},h_{Y_j}\}(\sigma)=\delta_{i,j}h_R(\sigma)=\lambda(\sigma)$, as well as 
$\omega_\sigma^{N\Sigma}(\vec h_{X_i}(\sigma),\vec h_{X_j}(\sigma)) = \omega_\sigma^{N\Sigma}(\vec h_{Y_i}(\sigma),\vec h_{Y_j}(\sigma)) =0$ for any indices $1\leq i,j\leq d$, it follows that
$$
\omega^{N\Sigma} = \lambda\, \Xi^* (-d\eta_{\vert \cD}).
$$
The symplectic normal bundle $N\Sigma$ inherits a linear symplectic structure on its fibers $N_\sigma\Sigma$ endowed with the symplectic form $\omega_\sigma^{N\Sigma}$.

\subsection{SubLaplacians}
\label{subsect:sublaplacian}

Let $(M^{2d+1},\eta,g)$ be a compact contact metric structure. We denote by $g_\cD$ the restriction of the metric to the contact distribution $\cD = \ker \eta$. We also introduce the (degenerate) dual metric $g_{\cD}^*$ on $T^* M$ associated with $g_\cD$, which is defined as follows: for $x\in M$ and any two $\xi_1, \xi_2\in T_x^* M$, we set
\begin{equation*}
    g_{\cD,x}^*(\xi_1,\xi_2)=g_{\cD,x}(v_1,v_2),
\end{equation*}
where $v_i$, $i=1,2$, are vectors in $\cD_x$ satisfying
\begin{equation*}
    \xi_i(\cdot) = g_{\cD,x}(v_i,\cdot).
\end{equation*}

Finally, we introduce the notation $\nu$ for the probability measure given by the density
\begin{equation*}
 \nu = \frac{1}{{\rm vol}(M)}|{\rm vol}|.
\end{equation*}

Let $L^2(M,\nu)$ be the Hilbert space of complex-valued functions $\psi$ such that $|\psi|^2$ is $\nu$-integrable over $M$. 
Then, we consider the following differential operator.

\begin{definition}
 \label{def:subLaplacian}
 We define the operator $-\Delta_{\rm sR}$, called the \emph{subLaplacian}, as the nonnegative self-adjoint operator on $L^2(M,\nu)$
 obtained by Friedrichs extension of the Dirichlet integral 
 \begin{equation*}
 \forall \psi \in C^\infty(M),\quad Q(\psi) = \int_M \|d\psi\|_{g_{\cD}^*}^2 d\nu,
 \end{equation*}
 where the norm of the 1-form $d\psi$ is computed with respect to the dual metric $g_{\cD}^*$. We denote by ${\rm Dom}(-\Delta_{sR})\subset L^2(M,\nu)$ the domain of the subLaplacian.
\end{definition}

\begin{remark}
    Actually, since $M$ is compact, the subLaplacian is essentially self-adjoint on $C^\infty(M)$, see \cite{Str}.
\end{remark}

If we are given a local $g$-orthonormal frame $(V_1,\dots,V_{2d})$ of $\cD$, we intoduce the notation 
\begin{equation*}
 \nabla_{sR} \psi = \sum_{i=1}^{2d} (V_i \psi)V_i,
\end{equation*}
called the \emph{subRiemannian gradient}. Indeed, it is independent of the choice of local $g$-orthonormal frame and is thus defined globally. Then we have 
\begin{equation*}
 Q(\psi) = \int_M \|\nabla_{sR} \psi\|_{g_\cD}^2 d\nu.
\end{equation*}
It follows that 
\begin{equation*}
 \Delta_{sR} = - \sum_{i=1}^{2d} V_{i}^* V_i = \sum_{i=1}^{2d} \left(V_{i}^2 + \div_\nu(V_i) V_i\right),
\end{equation*}
where the adjoints are taken in $L^2(M,\nu)$ and where the divergence $\div_\nu(X)$ of a vector field $X$ is given by 
$\mathcal{L}_X \nu = \div_\nu(X)\nu$.

\begin{remark}
    On the Heisenberg group $\Heis^d$ endowed with its left-invariant metric $g_{\Heis^d}$, the subLaplacian is then simply given by
    \begin{equation*}
        \Delta_{\Heis^d} = \sum_{i=1}^d (\tilde{X}_i^2+\tilde{Y}_i^2).
    \end{equation*}
\end{remark}

\subsection{Examples}

We present here various examples of contact manifolds which carry natural contact metric structure and their associated subLaplacian.

\subsubsection{3D subRiemannian contact manifolds} 
\label{subsubsect:3DsR}
The particular case of the 3-dimensional subRiemannian contact manifolds has been the main geometric setting studied for the analysis of subLaplacians associated with an equiregular distribution. We recall here the point of view adopted by \cite{CdVHT18} and show that it is included in the setting of contact metric structures.

\begin{definition}
 \label{def:3dsubRcontactmanifolds}
 A \emph{3-dimensional subRiemannian contact manifold} is the data of a smooth manifold $M^3$ together with a non-integrable and orientable distribution $\cD\subset TM$ of rank 2, as well as with 
 a smooth metric $g_\cD$ on the distribution $\cD$.
\end{definition}

Since $\cD$ is orientable, and as we are in dimension 3, it is co-orientable as well, and hence it can be seen as the kernel of a global 1-form $\eta_g$. Moreover, $\eta_g$ 
is chosen such that $d\eta_g$ coincides on $\cD$ with the volume form induced by $g_\cD$ on $\cD$:
\begin{equation}
    \label{eq:normalizationeta}
    d\eta_g = d{\rm vol}_{g_\cD}.
\end{equation}
The non-integrability condition on $\cD$ implies that $\eta_g$ is a contact form.

Denoting by $R$ the Reeb vector field associated with this contact form, we extend the metric $g_\cD$ to the whole tangent bundle by asking the vector field $R$ to be orthogonal to $\cD$ and of norm 1.
We denote by $g$ the resulting metric. 

Then $g$ is an associated metric with the contact manifold $(M^3,\eta_g)$. Indeed, considering the $(1,1)$-tensor field $\phi$ defined by the relation
\begin{equation*}
    \forall X,Y \in \Gamma(\cD), \quad d\eta(X,Y) = g(X,\phi Y),
\end{equation*}
and by $\phi(R) = 0$, it follows from the normalization condition \eqref{eq:normalizationeta} that the relations given by~\eqref{eq:relassociatedmetric} are satisfied.
The density given by the volume form ${\rm vol}_g$ on $M$ is then called in this context the \emph{Popp measure}.

\subsubsection{Strongly pseudoconvex CR-manifolds}

We present here an example of contact metric manifolds, which in particular encompasses the previous one.

\begin{definition}[\cite{Blair,DT}]
    A CR-manifold $M^{2d+1}$ is an odd-dimensional manifold with a subbundle
    \begin{equation*}
        T^{0,1}M\subset TM\otimes\C,
    \end{equation*}
    with the following properties:
    \begin{enumerate}
        \item[(i)] ${\rm dim}_\C T^{0,1}M = d$;
        \item[(ii)] $T^{1,0}M\cap T^{0,1}M = \{0\}$, where we set $T^{1,0}M = \overline{T^{0,1}M}$;
        \item[(iii)] $T^{0,1}M$ is involutive, meaning for two section $Z_1,Z_2$ of $T^{0,1}M$, the Lie bracket $[Z_1,Z_2]$ is still a section of $T^{0,1}M$.
    \end{enumerate}
\end{definition}

For any CR-manifold $(M^{2d+1},T^{0,1}M)$, following \cite[Section 6.4]{Blair}, there exists a subbundle $\cD\subset TM$ such that
\begin{equation*}
    \cD\otimes \C = T^{1,0}M\oplus T^{0,1}M,
\end{equation*}
as well as a unique bundle map $\phi:\cD\rightarrow\cD$ such that $\phi^2 = -{\rm Id}$ and
\begin{equation*}
    T^{0,1}M = \{X - i\phi X\,:\,X\in\Gamma(\cD)\}.
\end{equation*}

If $M$ is orientable, which we now assume, the distribution $\cD$ can be described as the kernel of a global 1-form $\eta$, called in this context a \emph{pseudo-Hermitian structure}. Associated with this structure is a bilinear form on $\cD$ defined by
\begin{equation*}
    L_\eta(X,Y) := -d\eta(X,\phi Y),\quad X,Y\in\Gamma(\cD),
\end{equation*}
and called the \emph{Levi form}. The pseudo-Hermitian manifold $(M^{2d+1},T^{0,1}M,\eta)$ is then said to be \emph{strongly pseudoconvex} if the Levi form is positive everywhere. 

It follows that for such a manifold, $\eta$ is a contact form. Extending both the Levi form and the almost complex structure $\phi$ to the Reeb vector field $R$ associated with $\eta$ in the natural way, it follows that any strongly pseudoconvex CR-manifold carries a contact metric structure.
In this context, the subLaplacian, as defined by Definition~\ref{def:subLaplacian}, is often denoted by $\Delta_b$.

\vspace{0.25cm}

The interest for CR-manifolds comes from the problem of restriction of holomorphic functions to hypersurfaces, on which the restriction of the Cauchy-Riemann equations gives rise to a particular differential operator. On a CR-manifold $(M^{2d+1},T^{0,1}M)$, this operator is defined by
\begin{equation*}
    \forall f\in C^\infty(M),\quad \overline{\partial}_bf = (df)_{|T^{0,1}M}\in\Gamma(T^{0,1}M^*),
\end{equation*}
called the \emph{tangential Cauchy-Riemann operator} and whose kernel thus represents in this abstract setting the restrictions of holomorphic functions. In fact, this operator can be defined for sections of $q$-forms on the bundle $T^{0,1}M^*$, $q\geq 0$, and gives rise to a cochain complex. 

This complex has been studied by many authors (see \cite{DT,FolKohn} for further references). Its study naturally leads to the introduction of a second-order differential operator, defined in particular on functions by
\begin{equation*}
    \square_b = \overline{\partial}_b^*\overline{\partial}_b,
\end{equation*}
and often referred to as the \emph{Kohn Laplacian}.

On functions, the Kohn Laplacian and the subLaplacian are related as follows (see e.g.\,\cite{Ep}):
\begin{equation*}
    \square_b = -\frac{1}{4}\left(\Delta_b -idR \right) + \mbox{lower order terms}.
\end{equation*}

This relation motivates the study of the subLaplacian on strongly pseudoconvex manifolds.

\subsubsection{Principal circle bundles} 

Suppose that we are given a symplectic manifold $(M^{2d},\Omega)$, with $\Omega$ of integral class and consider a corresponding Boothby-Wang fibration $P^{2d+1}$ with contact form $\eta$ and projection $\pr: P\rightarrow M$, as defined in Section~\ref{subsubsect:princcirclebundles}.

Suppose furthermore that $(M,\Omega)$ is endowed with an almost Kählerian structure, i.e.~there exists a $(1,1)$-tensor field $J$ such that $J^2 = -I_{d}$ and a Riemannian metric $G$ satisfying for all vector fields $X$ and $Y$,
\begin{equation*}
    \Omega(X,Y) = G(X,JY).
\end{equation*}
From this almost Kählerian structure, we can construct a contact metric structure on $P^{2d+1}$ as follows: for $X\in TM$, define $\phi(X)$ as the horizontal lift of $J(\pr_{*}(X))$. Then, since the Reeb vector field $R$ is vertical we get 
\begin{equation*}
    \phi^2 = -I + \eta \otimes R.
\end{equation*}
Defining now a Riemannian metric on $P$ by $g = \pr^{*} G + \eta \otimes \eta$, one can check that we have 
\begin{equation*}
    d\eta(X,Y) = g(X,\phi(Y)).
\end{equation*}
It is not hard to see that the Reeb vector field $R$ is Killing for the metric $g$, meaning we defined a K-contact metric structure $(\phi,R,\eta,g)$ on the principal circle bundle $P^{2d+1}$.

\subsubsection{Magnetic Schrödinger operators} Let $(X^2,g)$ be a compact oriented Riemannian surface. Suppose we are given a non zero 2-form $B$ of integral class. One can write 
\begin{equation*}
    B = b\, {\rm vol}_g,
\end{equation*}
with $b$ a smooth non-vanishing function on $M$ and ${\rm vol}_g$ the volume form associated with $g$. The function $b$ is called the \emph{magnetic field}, and we will assume it to be positive.

Applying Theorem~\ref{thm:existenceprincipalbundle}, one can consider a principal circle bundle $(M^3,X^2)$ with connection form $\eta$ satisfying 
\begin{equation*}
    d\eta = \pr^* B,
\end{equation*}
where $\pr:M\rightarrow X$ is the projection map. Since $B$ is non-degenerate, the 1-form $\eta$ is a contact form. Considering local coordinates $(x_1,x_2)$ on $X$ and parameterizing the fibers locally by the variable $\theta$, there exists a 1-form $A(x_1,x_2)$  such that we have
\begin{equation*}
    \eta = d\theta + A.
\end{equation*}

Denoting by $\cD = \ker\eta$ the horizontal distribution, since the map $\pr_{*}$ is one-to-one from $\cD$ to $TX$, one can lift the metric $g$ into a subRiemannian metric $g_\cD$ on $M$ defined on the distribution $\cD$. We find ourselves back in the setting of 3-dimensional subRiemannian contact manifolds or of principal circle bundles, and we can extend accordingly this subRiemannian structure into a contact metric structure.
Observe that the contact form $\eta_g \in \R \eta$ rescaled so as to have 
\begin{equation*}
    (d\eta_{g})_{|\cD} = d{\rm vol}_{g_\cD}\ \mbox{on}\ \cD,
\end{equation*}
is given by
\begin{equation*}
    \eta_g = b^{-1}\eta.
\end{equation*}
Using the coordinates $(x_1,x_2,\theta)$, $\eta_g$ is locally given by $b^{-1}(d\theta+A)$. One can then find the following expression of the Reeb vector field:
\begin{equation*}
    R = b\partial_\theta - \Vec{b},
\end{equation*}
where $\Vec{b}$ is the horizontal lift of the Hamiltonian vector field of $b$ computed with respect the symplectic form $B$.

Denoting by $(g_{i,j}(x_1,x_2))_{1\leq i,j\leq 2}$ the matrix coefficients of the metric $g$ in the coordinates $(x_1,x_2)$, we have the following expression for the subLaplacian in coordinates:
\begin{equation}
    \label{eq:sublaplaciancoord}
    \Delta_{sR} = \frac{1}{\sqrt{|g|}}\sum_{i,j} (\partial_i - A\partial_\theta)[\sqrt{|g|}g^{i,j}(\partial_j - A\partial_\theta)],
\end{equation}
where we set $|g| = {\rm det}(g_{i,j})$ and $(g^{i,j})_{1\leq i,j\leq 2}$ is the inverse matrix of $(g_{i,j})_{1\leq i,j\leq 2}$.

Observe that the subLaplacian is invariant by action of the circle $\bS$ on $M$, since the 1-form $\eta_g$ and the choice of metric is invariant. This can also be seen locally in the coordinates $(x,y,\theta)$ by Equation~\eqref{eq:sublaplaciancoord}.
Performing a Fourier series expansion of the functions in $L^2(M)$ with respect to the $\bS$-action, one obtain the decomposition
\begin{equation*}
    L^2(M) := \bigoplus_{m\in\Z} L_{m}^2(X),
\end{equation*}
where each subspace $L_{m}^2(X)$, $m\in\Z$, is stable under the action of $-\Delta_{sR}$. Introducing the notation $C_{m}^\infty(X)\subset L_{m}^2(X)$, $m\in\Z$, for the smooth functions corresponding to $m$-th mode of the Fourier series, we write $\Delta_m$ for the restriction of $\Delta_{sR}$ to $C_{m}^\infty(X)$. In the coordinates $(x,y)$ one gets
\begin{equation*}
    \Delta_m = \frac{1}{\sqrt{|g|}}\sum_{i,j} (\partial_i - m A)[\sqrt{|g|}g^{i,j}(\partial_j - m A)],
\end{equation*}
which corresponds to the \emph{magnetic Schrödinger operator} on $(X,g)$ with magnetic field given by the 2-form $m B$.

\section{Tangent group bundle}
\label{sect:tanggroupbundle}

In this section, we suppose given $(M^{2d+1},\eta,g)$, a compact contact metric structure. Most of the notions and notations are taken from \cite{FFF24}.

\subsection{Osculating Lie algebras}

Recall that we denote by $\cD$ the contact distribution associated with the 1-form $\eta$. This subbundle defines a (trivial) filtration of the tangent space:
\begin{equation*}
\{0\}\subset \cD\subset TM.
\end{equation*}
For each point $x\in M$, we define the graded vector space
\begin{equation*}
 {\rm gr}(T_x M) = \cD_x \oplus T_x M/ \cD_x.
\end{equation*}
Following the presentation of \cite[Section 3.1]{FFF24}, each of these fibers is naturally endowed with a Lie bracket, which we denote by $[\cdot,\cdot]_{\fg_x M}$. Indeed, for 
two vector fields $X,Y \in \cD$, we can set
\begin{equation*}
 [X_x,Y_x]_{\fg_x M} := [X,Y](x) \mod \cD_x,
\end{equation*}
as we observe that the Lie bracket of vector fields of $\Gamma(\cD)$ projected on $TM/\cD$ is $C^\infty(M)$-linear and thus makes sense punctually.
To complete the definition of the structure of Lie algebra, the bracket with any element of $T_x M/\cD_x$ is set to 0. 
We denote the resulting Lie algebra by 
\begin{equation*}
 \fg_x M = ({\rm gr}(T_x M),[\cdot,\cdot]_{\fg_x M}).
\end{equation*}
The collection of these fibers, denoted by $\fg M$, is then a smooth bundle of step-2 stratified nilpotent Lie algebras, called the \emph{bundle of osculating Lie algebras} of $(M,\eta)$. 

In order to obtain local trivializations of this vector bundle, we can consider adapted frames.

\begin{definition}
    A local adapted frame $(\bX,U)$ is a frame $\bX = (V_1,\dots,V_{2d+1})$ on the open subset $U\subset M$ such that $(V_1,\dots,V_{2d})$ is a frame for the contact distribution $\cD_{|U}$. We then denote by $\langle \bX\rangle := (\langle V_1\rangle,\dots,\langle V_{2d+1}\rangle)$ its natural projection on $\mathfrak{g}_{|U}M$.
\end{definition}

For $x\in M$, we introduce the notation $\fv_x M$ for the projection of $\cD_x$ in 
$\fg_x M$ as well as $\fr_x M$ for the projection of $T_x M/ \cD_x$, so as to be able to write
\begin{equation*}
 \fg M = \fv M \oplus \fr M.
\end{equation*}

As graded spaces, the fibers of $\fg M$ come with natural dilations.

\begin{definition}
    \label{def:dilations}
    For $x\in M$, we define on the Lie algebra $\fg_x M$ the family $(\delta_t)_{t>0}$ of Lie algebra automorphisms, called \emph{dilations}, by 
    \begin{equation*}
        \forall t>0,\quad \delta_t(V_x+R_x) = t V_x + t^2 R_x,\quad\mbox{where}\ 
        V_x \in \fv_x M,\, R_x \in \fr_x M.
    \end{equation*}
\end{definition}

The Lie algebra bundle gives rise to a Lie group bundle as follows. Each fiber $\fg_x M$, $x\in M$ is a nilpotent Lie algebra and, as such, is associated with a unique (up to isomorphism) nilpotent Lie group, denoted by $G_x M$, which is connected and simply connected. By nilpotency, the exponential mapping 
\begin{equation*}
    \Exp_{G_x M}: \fg_x M \rightarrow G_x M,
\end{equation*}
is a diffeomorphism between the Lie algebra and the associated Lie group.
We can thus realize the Lie group directly as $\fg_x M$, endowed with the group law given by the Baker-Campbell Hausdorff formula.

In this way, the bundle $GM = \bigcup_{x\in M} G_x M$ coincides as a manifold with $\fg M$ and is a smooth bundle of step 2 nilpotent Lie groups. It is called the \emph{tangent group bundle} of $(M,\eta)$.

\subsubsection{Local $\phi$-frames}
\label{subsubsect:phiframe}

We investigate how local $\phi$-frames introduced in Section~\ref{subsubsect:almostcontactstruct}, which are adapted frames, give nice trivializations of the tangent group bundle $GM$.
Indeed, recall that a local $\phi$-frame $(\bX,U)$ is a $g$-orthonormal frame on $U$, $$\bX = (X_1,\dots,X_d,Y_1,\dots,Y_d,R),$$  where the vector fields $(X_i)_{1\leq i\leq d}$ and $(Y_i)_{1\leq i\leq d}$ 
are tangent to the contact distribution $\cD$, and satisfy moreover 
\begin{equation*}
 \forall i,j \in \{1,\dots,d\},\quad d\eta(X_i,Y_j) = -\delta_{i,j}\quad\mbox{and}\quad d\eta(X_i,X_j) = d\eta(Y_i,Y_j) = 0.
\end{equation*}
Since for two vector fields $X$ and $Y$ tangent to $\cD$, we can write
\begin{equation*}
 d\eta(X,Y) = -\eta([X,Y]),
\end{equation*}
the previous equation gives 
\begin{equation*}
 \forall i,j \in \{1,\dots,d\},\quad [X_i,Y_j] = \delta_{i,j} R \mod \cD\quad\mbox{and}\quad [X_i,X_j],\, [Y_i,Y_j]\in\cD.
\end{equation*}

Then, the projection $\langle \bX\rangle$ of this adapted frame on $\fg_x M$, which we recall is given by 
\begin{equation*}
 \forall i \in \{1,\dots,d\},\quad \langle X_i \rangle_x = X_{i,x}\quad\mbox{and}\quad\langle Y_i \rangle_x = Y_{i,x},
\end{equation*}
seen as elements of $\fv_x M$, and by 
\begin{equation*}
 \langle R\rangle_x = R_x \mod \cD_x,
\end{equation*}
defines a frame for the osculating Lie algebras $\fg M$ over $U$, which satisfies moreover the following commutator relations: for all $i,j\in\{1,\dots,d\}$,
\begin{equation*}
 \forall x \in U,\quad [\langle X_i\rangle_x,\langle Y_j\rangle_x]_{\fg_x M} = \delta_{i,j} \langle R\rangle_x\quad\mbox{and}\quad [\langle X_i\rangle_x,\langle X_j\rangle_x]_{\fg_x M}= [\langle Y_i\rangle_x,\langle Y_j\rangle_x]_{\fg_x M}=0.
\end{equation*}

Adapted frames for which the projection on $\mathfrak{g}M$ satisfies exactly the Heisenberg Lie algebra relations~\eqref{eq:heisenbergrelations} are called \emph{Heisenberg frames}. Thus, local $\phi$-frames, similarly to local frames coming from Darboux coordinates, are Heisenberg frames.

\vspace{0.25cm}

We deduce in particular that the osculating Lie algebras bundle is a locally trivial Lie algebra bundle modeled on the Heisenberg Lie algebra $\fh^d$, i.e.~the map 
\begin{equation*}
 \varphi_{U}^{\bX}:\,U\times\fh^d \longrightarrow \fg M_{|U},
\end{equation*}
defined by
\begin{equation*}
 \varphi_{U}^{\bX}(x,v) = \left(x,\sum_{i=1}^d (v_i \langle X_i\rangle_x + v_{d+i}\langle Y_i\rangle_x)+v_{2d+1}\langle R\rangle_x\right),
\end{equation*}
is a diffeomorphism and a Lie algebra isomorphism between each fibers. 

Now, using the group exponential on each fiber $G_x M$, $x\in M$, we can consider the parameterization of $G_x M$ by the Heisenberg Lie algebra $\fh^d$ given by 
\begin{equation*}
    \forall v\in\fh^d,\quad \Exp_{x}^\bX(v) := \Exp_{G_x M}(\varphi_{U}^\bX(x,v)),
\end{equation*}
and we define the smooth diffeomorphism $\Exp^\bX: U\times\fh^d \rightarrow G_{|U}M$ by 
\begin{equation*}
    \forall (x,v)\in U\times\fh^d,\quad \Exp^\bX(x,v) := \Exp_{x}^\bX(v).
\end{equation*}

If we are given two such trivializations $\varphi_{U_1}^{\mathbb{X_1}}$ and $\varphi_{U_2}^{\mathbb{X_2}}$ associated with two local $\phi$-frames $(U_1,\mathbb{X_1})$ and $(U_2,\mathbb{X_2})$,
then, as discussed in Section~\ref{subsubsect:almostcontactstruct}, the transition between the two frames on $U_1\cap U_2$ is given by a family of matrices $(A(x))_{x\in U_1\cap U_2}$ in 
$U(d)\times 1$:
\begin{equation}
    \label{eq:transitionmaps}
    \forall x\in U_1\cap U_2,\quad \bX_1(x) = A(x)\bX_2(x).
\end{equation}
Because of their block-like structure, these transition matrices directly give the transition matrices between the adapted frames $\langle\bX_1\rangle$ and $\langle\bX_1\rangle$ in $\fg M_{|U_1\cap U_2}$.
We obtain 
\begin{equation*}
 \forall (x,v)\in (U_1\cap U_2,\fh^d),\quad \varphi_{U_1}^{\bX_1,-1}\circ \varphi_{U_2}^{\mathbb{X_2}}(x,v) = (x,A(x)v).
\end{equation*}

\subsubsection{Smooth Haar system}
\label{subsubsect:smoothhaarsys}

Observe that, since the transition matrices between two $\phi$-frames as defined by Equation~\eqref{eq:transitionmaps}, are in $U(d)\times 1$, the Riemannian structure on the tangent space $TM$ descends on the osculating Lie algebra bundle $\fg M$.

This remark allows us to say that the tangent group bundle $GM$ of a contact metric structure carries a natural \emph{smooth Haar system} $\mu_{GM} = \{\mu_x\}_{x\in M}$. Recall that a smooth Haar system (defined by \cite[Definition 5.2]{FFF24}) is a family of Haar measures $\mu_x$  on each fiber $G_x M$ such that the map $x\in M \mapsto \int_{M} f(x,h)d\mu_x(h)$ is smooth for any $f\in C_{c}^\infty(GM)$. 

\vspace{0.25cm}

Indeed, on the contact manifold $(M,\eta)$, choosing a local $\phi$-frame $(\bX,U)$, the pushforward by $\varphi_{U}^\bX$ of the constant measure system $\{{\rm Leb}_{\fh^d}^{\bX}\}_{x\in U}$, where ${\rm Leb}_{\fh^d}^{\bX}$ is the Lebesgue measure on $\fh^d$ in the basis $\langle\bX\rangle_x$, defines indeed a smooth measure system on $\fg_{|U}M$. Moreover, the previous discussion ensures that the resulting measure system is independent of the choice of local $\phi$-frames. Finally, using the group exponential $\Exp_{G_x M}$, $x\in M$, pushing forward this measure system defines uniquely a smooth Haar system, denoted simply by $\{\mu_x\}_{x\in M}$.

\subsubsection{Stratified spaces and higher order terms}

We recall here the notions of stratified vector space and their dilations.

\begin{definition}
    \label{def:stratspace}
    A finite-dimensional vector space $\cV$ is said to be \emph{stratified} with respect to the sequence $(\cV_{i})_{1\leq j\leq p}$ of subspaces, if it admits the decomposition
    \begin{equation*}
        \cV = \bigoplus_{1\leq j\leq p} \cV_j,
    \end{equation*}
    and $\cV_j \neq \{0\}$ for $1\leq j \leq p$.
\end{definition}

For a stratified vector space $\cV$, similarly as the dilations introduced by Definition~\ref{def:dilations}, we consider the family of automorphisms $(\delta_t)_{t>0}$ of $\cV$ defined by 
\begin{equation*}
    \delta_r(v) = r^j v,\quad v\in\cV_j.
\end{equation*}
The \emph{homogeneous dimension} of $\cV$ for this stratification is denoted by $Q$ and given by
\begin{equation*}
    Q := \sum_{j=1}^p j\, {\rm dim}(\cV_j). 
\end{equation*}

The dilations lead to the notion of homogeneous function: for $m\in\R$, a function $f:\cV \rightarrow \C$ is \emph{m-homogeneous} if $f(\delta_r v)= r^{m} f(v)$ for all $v\in\cV$ and $r>0$. This extends to distributions on $\cV$. A linear operator $T:\cS(\cV)\rightarrow \cS'(\cV)$ is also said to be $m$-homogeneous with respect to $(\delta_t)_{t>0}$ if for any $f\in\cS(\cV)$, we have $T(f\circ\delta_r) = r^m (Tf)\circ\delta_r$.

\begin{definition}
    \label{def:quasinorm}
    A \emph{quasinorm} on $\cV$ is a continuous map $|\cdot|:\cV\rightarrow \R$ which is symmetric, definite and 1-homogeneous with respect to $(\delta_t)_{t>0}$, i.e.~satisfying
    for all $v\in\cV$ and $r>0$,
    \begin{equation*}
        |-v|=|v|,\quad |v|=0 \iff v=0,\quad \mbox{and}\quad |\delta_r v| = r|v|.
    \end{equation*}
\end{definition}

As our primary example, we can take the Heisenberg Lie algebra $\fh^d$ and its stratification given by Equation~\eqref{eq:stratificationHeis}:
\begin{equation*}
    \fh^d = \fv\oplus\fr.
\end{equation*}
Then one can consider the \emph{Kor\'anyi quasinorm} given in the coordinates $(x,y,z)$ by 
\begin{equation*}
    |(x,y,z)| = \left((x^2+y^2)^2+z^2\right)^{1/4}.
\end{equation*}
Remark that this quasinorm makes sense on the fibers of $\fg M$ independently of the choice of local trivialization given by a local $\phi$-frame.

In the case of the Heisenberg Lie algebra, keeping the same notation for it, we extend the quasinorm to the Heisenberg group using the exponential map. Then, we have the following triangle inequality:
\begin{equation*}
    \exists C>0, \forall h_1,h_2\in\Heis,\quad |h_1 * h_2|\leq C(|h_1|+|h_2|).
\end{equation*}

We now define the notion of \emph{higher order map}. Let $\cV_1 = \bigoplus_{1\leq j\leq p_1}\cV_{1,j}$ and $\cV_2= \bigoplus_{1\leq j\leq p_2}\cV_{2,j}$ be two stratified vector spaces as in Definition~\ref{def:stratspace}, and denote by $(\delta_{1,r})_{r>0}$ and $(\delta_{2,r})_{r>0}$ their respective dilations. We suppose moreover that we fixed adapted bases $(V_1,\dots,V_{{\rm dim}(\cV_1)})$ and $(W_1,\dots,W_{{\rm dim}(\cV_2)})$ of $\cV_1$ and $\cV_2$ respectively, i.e.~that are concatenations of bases of the subspaces $\cV_{i,j}$, $i\in\{1,2\}$, $1\leq j\leq p_i$.

\begin{definition}
    \label{def:higherorder}
    A polynomial map $r:\cV_1\rightarrow\cV_2$ is higher order (with respect to the stratification) when the degrees of homogeneity (for the dilations on $\cV_1$) of the monomials occurring in its $j$-th component $r_j$, $1\leq j\leq p_2$, are $> j$. The notion of higher order does not depend on the choice of adapted bases.
\end{definition}

We extend the notion of higher order to smooth maps.

\begin{definition}
    \label{def:higherordersmooth}
    A smooth function $r$ defined on a neighborhood of $0$ in $\cV_1$ and valued in $\cV_2$ is also said to be higher order when its Taylor serie at $0$ is a serie of polynomial maps of higher order.
\end{definition}

We recall some characterizations of higher order maps given by \cite[Proposition 4.5]{FFF24}.

\begin{proposition}
    Let $r:\cV_1\rightarrow\cV_2$ a smooth map. The following properties are equivalent:
    \begin{itemize}
        \item[(i)] The map $r$ is higher order.
        \item[(ii)] For one (and then any) neighborhood $\cV_0$ of $0$, we have for any $v\in V_0$,
        \begin{equation*}
            \lim_{\eps\rightarrow 0} \delta_{2,\eps^{-1}}r(\delta_{1,\eps} v) = 0.
        \end{equation*}
        \item[(iii)] There exists a unique smooth map $R: \R_{+}\times\cV_1 \rightarrow \cV_2$ satisfying $\delta_{2,\eps^{-1}}(r(\delta_{1,\eps} v)) = \eps R(\eps,v)$, for any $v\in\cV_1$ and $\eps>0$.
    \end{itemize}
\end{proposition}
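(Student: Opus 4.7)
The plan is to reduce all three equivalences to a single identity describing how the dilations act on monomials in the Taylor series of $r$ at $0$. Fix adapted bases $(V_i)_i$ of $\cV_1$ and $(W_\ell)_\ell$ of $\cV_2$, let $d_i$ and $e_\ell$ denote their respective stratification levels, and write $v = \sum_i v_i V_i$. For any monomial $\alpha v^a W_\ell$ valued in $\cV_{2,e_\ell}$, with homogeneity degree $|a|_h := \sum_i d_i a_i$, a direct computation yields
\begin{equation*}
\delta_{2,\eps^{-1}}\bigl(\alpha(\delta_{1,\eps}v)^a W_\ell\bigr) = \eps^{|a|_h - e_\ell}\alpha v^a W_\ell.
\end{equation*}
This single rescaling rule drives everything.

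For (i) $\Rightarrow$ (iii) in the polynomial case, higher-orderness means $|a|_h - e_\ell \geq 1$ for every nonzero monomial of $r$, so $\delta_{2,\eps^{-1}}r(\delta_{1,\eps}v)$ equals $\eps$ times a polynomial $R(\eps,v)$, which is the desired smooth map. For a general smooth $r$, I would apply Taylor's formula with integral remainder at arbitrary order $N$: the polynomial part gives $\eps P_N(\eps,v)$ by the above, while the Euclidean remainder $r_N^{\text{rem}}(\delta_{1,\eps}v) = O(|\delta_{1,\eps}v|^{N+1})$ rescales to $O(\eps^{cN})$ for some $c > 0$ uniformly on compacts in $v$, since the amplification factor $\delta_{2,\eps^{-1}}$ contributes at most $\eps^{-e_{\max}}$. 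Propagating the same estimate to mixed partial derivatives in $(\eps,v)$ and letting $N \to \infty$ produces a smooth extension of $R$ to $\eps = 0$, uniquely determined by its values for $\eps > 0$.

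The implication (iii) $\Rightarrow$ (ii) is immediate, since smoothness of $R$ at $(0,v)$ forces $\eps R(\eps,v) \to 0$ locally uniformly. For (ii) $\Rightarrow$ (i), I would argue by contrapositive: suppose some Taylor monomial $\alpha v^a W_\ell$ of $r$ at $0$ satisfies $\alpha \neq 0$ and $|a|_h \leq e_\ell$. Splitting $r = P_N + r_N^{\text{rem}}$ with $P_N$ collecting all Taylor terms of homogeneity $\leq N$, the same estimate as above shows the rescaled remainder tends to $0$ once $N$ is large. Assumption (ii) then forces $\sum_{(a,\ell)} \eps^{|a|_h - e_\ell}\alpha_{a,\ell}\,v^a W_\ell \to 0$ as $\eps \to 0$ for every $v$. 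Extracting individual coefficients, either by differentiating in $v$ at $0$ or by comparing powers of $\eps$ along one-parameter curves $t \mapsto tv$ and using linear independence of monomials, every exponent $|a|_h - e_\ell$ with $\alpha_{a,\ell}\neq 0$ must be strictly positive, contradicting our standing assumption.

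The main obstacle is the smoothness statement in (i) $\Rightarrow$ (iii): constructing a smooth extension $R$ up to $\eps = 0$ from data defined only for $\eps > 0$ requires uniform control of all mixed partial derivatives of the rescaled Taylor remainder, as the dilation $\delta_{2,\eps^{-1}}$ amplifies higher-stratum components and competes with the polynomial decay in $\eps$. Once this uniform estimate is secured, the remaining implications reduce to algebraic bookkeeping with the monomial rescaling formula above.
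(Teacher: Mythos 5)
The paper does not prove this proposition; it is recalled verbatim from \cite[Proposition~4.5]{FFF24}, so there is no in-paper argument to compare yours against.

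Evaluated on its own, your reduction of all three equivalences to the monomial rescaling identity is the right one, and (iii)$\Rightarrow$(ii) and (ii)$\Rightarrow$(i) go through as sketched (for (ii)$\Rightarrow$(i), note that since (ii) is a pointwise statement in $v$, one first obtains $c_k(v) := \sum_{|a|_h - e_\ell = k}\alpha_{a,\ell}\,v^a = 0$ for every $v$ in the neighborhood and every $k\leq 0$, and only then invokes linear independence of the monomials to kill the individual coefficients). The step that is genuinely only sketched, as you acknowledge, is the smoothness of $R$ at $\eps=0$ in (i)$\Rightarrow$(iii). Your approach via Taylor truncation and uniform estimates on all mixed partial derivatives of the rescaled remainder can be pushed through, but a cleaner route is available: for each output component $\ell$, the map $f_\ell(\eps,v) := r_\ell(\delta_{1,\eps}v)$ extends to a smooth function of $(\eps,v)$ on a full neighborhood of $(0,0)$ in $\R\times\cV_1$, since $\delta_{1,\eps}v$ is polynomial in $\eps$; the higher-order hypothesis is then exactly the statement that $\partial_\eps^k f_\ell(0,v)=0$ for all $k\leq e_\ell$ and all $v$. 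Taylor's formula with integral remainder in the $\eps$-variable gives $f_\ell(\eps,v)=\eps^{e_\ell+1}g_\ell(\eps,v)$ with $g_\ell$ smooth, and $R=\sum_\ell g_\ell W_\ell$ is the desired map. This bypasses the competition between the $\eps^{-e_{\max}}$ amplification and the $\eps^{cN}$ decay that made your version of this step delicate to control at all orders of differentiation.
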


\subsubsection{Geometric exponentiation}

We present here the notion of geometric exponentiation for a local Heisenberg frame on $M$, as well as some of its properties. 

First, if $X$ denotes a vector field on $M$, we write $\exp_x(X)$ for the time-one flow, when defined, of $X$ starting from the point $x\in M$. More precisely, it corresponds to solution at time 1 of ordinary differential equation given by 
\begin{equation*}
    \Dot{\gamma}(t) = X(\gamma(t))\quad\mbox{and}\quad \gamma(0) = x,
\end{equation*}
i.e.~$\exp_x(X) := \gamma(1)$.

Given a local Heisenberg frame $(\bX,U)$ on $M$, with $\bX = (V_1,\dots,V_{2d+1})$, we set
for $x\in U$ and $v\in\R^{2d+1}$,
\begin{equation*}
    \exp_{x}^\bX(v) = \exp_x\left(\sum_{i=1}^{2d+1} v_i V_i\right),
\end{equation*}
when this makes sense. In what follows, we shall identify $\R^{2d+1}$ with the Heisenberg Lie algebra $\fh^d$ through the basis $(X_1,\dots,X_d,Y_1,\dots,Y_d,R)$ introduced in Equation~\eqref{eq:vectorfields}.

In fact, there exists an open neighborhood $U_\bX$ in $U\times \fh^d$ of the $(x,0)$ on which the following map, called the \emph{geometric exponential map},
\begin{equation*}
    \begin{aligned}
        \exp^\bX : U_\bX & \longrightarrow U \times U \\
        (x,v) & \longmapsto \exp_{x}^\bX(v),
    \end{aligned}
\end{equation*}
is a smooth diffeomorphism onto its image in $U\times U$. On suitable subsets of $U\times U$ and $U_\bX$, we introduce
\begin{equation*}
    \ln^\bX := (\exp^\bX)^{-1}\quad\mbox{and}\quad \ln_{x}^\bX := (\exp_{x}^\bX)^{-1}.
\end{equation*}

Without restriction, we will assume that the open set $U_\bX$ is such that for $(x,v)\in U_\bX$ and $t\in (0,1)$, $(x,tv)$ is also in $U_\bX$.

\vspace{0.25cm}

The following theorem, which is a special case of \cite[Theorem 4.13]{FFF24}, is the most important statement of this section, as it expresses the link between the geometric maps associated with local Heisenberg frames and the tangent group bundle.

\begin{theorem}
\label{thm:composgeomexp}
    Let $(\bX,U)$ be a local Heisenberg frame and $x\in U$. Then, on a neighborhood of $\{x\}\times\{0\}\times\{0\}$ in $U\times\fh^d\times\fh^d$, we have
    \begin{equation*}
        \ln_{x}^\bX\left(\exp_{\exp_{x}^\bX(w)}^\bX(v)\right) = w *_{\fh^d} v + r(x;w,v),
    \end{equation*}
    where $*_{\fh}$ denotes the Heisenberg group law lifted on $\fh^d$, and $r$ is a smooth map of higher order in $w$ and $v$ for the canonical stratification of $\fh^d$, valued in $\fh^d$.
\end{theorem}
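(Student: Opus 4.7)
The plan is to verify that the remainder
$$r(x;w,v) := \ln_x^\bX\!\left(\exp_{\exp_x^\bX(w)}^\bX(v)\right) - w *_{\fh^d} v$$
is of higher order in $(w,v)$ by checking characterization (ii) of the preceding proposition. Since the Heisenberg dilations act by group automorphisms on $\fh^d$, one has $\delta_\varepsilon(w *_{\fh^d} v) = (\delta_\varepsilon w) *_{\fh^d} (\delta_\varepsilon v)$, so the desired property $\delta_{\varepsilon^{-1}} r(x;\delta_\varepsilon w,\delta_\varepsilon v)\to 0$ is equivalent to the nilpotent approximation statement
$$\lim_{\varepsilon\to 0}\delta_{\varepsilon^{-1}}\ln_x^\bX\!\left(\exp_{\exp_x^\bX(\delta_\varepsilon w)}^\bX(\delta_\varepsilon v)\right) = w *_{\fh^d} v.$$

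The key step is to interpret the map $u\mapsto \delta_{\varepsilon^{-1}}\exp_x^\bX(\delta_\varepsilon u)$, after transport to $\fh^d$ via privileged coordinates centered at $x$, as the time-one flow at the origin of a rescaled vector field. Because $\bX = (V_1,\dots,V_{2d+1})$ is a Heisenberg frame, the brackets $[V_i,V_j]$ realize at $x$, modulo the filtration, the structure constants of $\fh^d$. A Taylor expansion in privileged coordinates shows that each $V_i$ decomposes as a principal part, of weighted degree equal to its filtration weight, plus terms of strictly higher weighted order; under the conjugation by $\delta_\varepsilon$, the correction terms vanish as $\varepsilon\to 0$, while the principal parts become precisely the left-invariant vector fields on $\Heis^d$ associated with the basis $(\tilde X_1,\dots,\tilde Y_d,R)$ of Section~\ref{subsubsect:heisenberg}.

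Passing to the limit in the defining ODEs, via continuous dependence on parameters on a fixed neighborhood of $0$, the rescaled geometric exponential converges to the group exponential on $\Heis^d$, and consequently the two-step composition on the left-hand side above converges to the composition of two such flows at the origin. By Baker--Campbell--Hausdorff and two-step nilpotency of $\Heis^d$, this composition is exactly $w *_{\fh^d} v$, yielding the required pointwise limit. Smoothness and joint regularity in $x$ are inherited from the smoothness of $\exp^\bX$, $\ln^\bX$, and the polynomial structure of $*_{\fh^d}$, and promote the pointwise vanishing to the refined statement (iii) of the preceding proposition, in which the smooth factor $R(\varepsilon,w,v)$ arises from a uniform Taylor expansion.

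The main obstacle will be the bookkeeping of weighted orders for the $V_i$: one has to check that, in privileged coordinates adapted to the filtration, every monomial occurring in the $V_i$ beyond the principal part carries strictly greater weighted degree than its target stratum, so that it is killed in the rescaling limit. This is precisely where the Heisenberg frame hypothesis is used — it forces the $\delta_\varepsilon$-principal parts of the $V_i$ to satisfy exactly the Heisenberg commutation relations, so that the surviving limit flows live on $\Heis^d$ and all discrepancies pour into the higher-order remainder $r(x;w,v)$.
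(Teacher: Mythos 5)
The paper itself does not prove this theorem; it is quoted as a special case of \cite[Theorem 4.13]{FFF24}, so there is no internal proof to compare with. Your nilpotentization-by-rescaling argument is the natural and essentially correct route, and your reduction to criterion (ii) of the preceding proposition is exactly right: the remainder $r$ is smooth as a composition of smooth maps, after which (i), (ii) and (iii) are equivalent, so the pointwise limit suffices.

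The one step you assert rather than justify is that the $\delta_\varepsilon$-principal parts $\hat V_i$ of the frame vectors, computed in the coordinates $\ln_x^\bX$, coincide with the standard left-invariant vector fields $\tilde V_i$ on $\Heis^d$ in exponential coordinates of the first kind. The Heisenberg frame hypothesis only fixes the structure constants of the $\hat V_i$ modulo the filtration, i.e.\ it tells you they span a copy of $\fh^d$; it does not by itself pin down the realization. What does pin it down is the additional observation that $\ln_x^\bX(\exp_x^\bX(u)) = u$ identically, so the rescaled one-flow $u \mapsto \delta_{\varepsilon^{-1}}\exp_x^\bX(\delta_\varepsilon u)$ is the identity for every $\varepsilon$; passing to the limit then forces the exponential map generated by the $\hat V_i$ to be the identity of $\R^{2d+1}$, and together with the bracket relations this characterizes $\hat V_i = \tilde V_i$. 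This is the precise location of the ``bookkeeping of weighted orders'' you rightly identify as the main obstacle, and it deserves to be written out rather than absorbed into the word ``precisely.'' Once it is in place, the remaining steps --- uniform $C^1$ convergence $V_i^\varepsilon \to \hat V_i$ on compacts, continuous dependence of flows on parameters, left-invariance for the second flow issued from the point $w$, and two-step Baker--Campbell--Hausdorff --- are routine, and your sketch of them is correct.
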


\subsection{Harmonic analysis of nilpotent Lie groups}

We give here a reminder of the classical results of harmonic analysis on nilpotent Lie groups. Let $G$ be such a simply connected group. The notations we use in this section are inspired by \cite{FR}.

\subsubsection{Unitary dual set}
\label{subsubsect:unitarydualset}

First, recall that a \emph{unitary representation} $(\pi,\cH_\pi)$ of the group $G$ is a pair consisting of a separable Hilbert space $\cH_\pi$ and a strongly continuous
morphism $\pi$ from $G$ to the set of unitary operators on $\cH_\pi$. A representation is said to be \emph{irreducible} if the only closed subspaces of $\cH_\pi$ stable under the action of
$\pi$ are $\{0\}$ and $\cH_\pi$ itself. Two representations $\pi_1$ and $\pi_2$ are equivalent if there exists a unitary transformation $\mathcal{I}: \cH_{\pi_1}\rightarrow \cH_{\pi_2}$, 
called an \emph{intertwining map}, such that 
\begin{equation*}
 \pi_1 = \mathcal{I}^{-1}\circ \pi_2 \circ \mathcal{I}.
\end{equation*}

We denote by $[\pi_1]=[\pi_2]$ the associated equivalence class. This leads to the following definition.

\begin{definition}
 \label{def:dualset}
 The set of all irreducible unitary representations, up to equivalence, is called the \emph{(unitary) dual set} and is denoted by $\Ghat$:
 \begin{equation*}
 \Ghat := \{[\pi]\,:\, \pi:G \rightarrow \mathcal{L}(\cH_\pi)\ \textrm{irreducible s.c.~unitary representation of}\ G\}.
 \end{equation*}
\end{definition}

We then have the following structure on the dual set of a nilpotent group.

\begin{proposition}
 \label{prop:dualsettopology}
 The unitary dual set $\Ghat$ is naturally endowed with a topology, called the \emph{hull-kernel topology} or the \emph{Fell topology}, as well as with a unique (up to a multiplicative constant) measure $\mu_{\Ghat}$ on the corresponding Borel $\sigma$-algebra, called the \emph{Plancherel measure}. Moreover, this defines a standard Borel structure on $\Ghat$.
\end{proposition}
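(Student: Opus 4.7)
The plan is to reduce everything to the general $C^*$-algebraic theory of Type I groups. The crucial first step is to recall that the full group $C^*$-algebra $C^*(G)$ of a connected simply connected nilpotent Lie group $G$ is of Type I, a fundamental theorem of Dixmier and Kirillov. This property is the essential input that makes all subsequent statements behave well; without it, the dual set can be pathological (for example, not even $T_0$, as happens for many solvable groups).

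Granted the Type I property, I would define the hull-kernel topology on $\Ghat$ by pulling back the Jacobson topology on the primitive ideal spectrum $\mathrm{Prim}(C^*(G))$ through the map $[\pi]\mapsto \ker \pi$, where $\pi$ is extended to $C^*(G)$ in the usual way. Because $G$ is Type I, this map is a bijection, so one obtains a well-defined, second-countable topology on $\Ghat$ which by construction coincides with the Fell topology. The associated Borel $\sigma$-algebra coincides with the Mackey Borel structure, generated by the functions $[\pi]\mapsto \langle \pi(f)\xi,\eta\rangle_{\cH_\pi}$ for $f\in L^1(G)$ and $\xi,\eta\in\cH_\pi$. Again thanks to Type I, a theorem of Mackey ensures that this Borel structure is countably separated, hence standard.

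For the Plancherel measure, I would invoke the abstract Plancherel theorem for second-countable unimodular Type I groups, due to Segal, Mautner and Dixmier: the left regular representation of $G$ on $L^2(G)$ decomposes as a direct integral of multiples of irreducibles, $\int_{\Ghat}^{\oplus} \pi\otimes \mathrm{Id}_{\cH_\pi}\, d\mu_{\Ghat}(\pi)$, and the measure $\mu_{\Ghat}$ is uniquely determined once a Haar measure on $G$ is fixed via the identity $\|f\|_{L^2(G)}^2 = \int_{\Ghat} \|\pi(f)\|_{\mathrm{HS}}^2\, d\mu_{\Ghat}(\pi)$ for $f\in L^1(G)\cap L^2(G)$. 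Rescaling the Haar measure rescales $\mu_{\Ghat}$, which accounts precisely for the multiplicative constant in the statement. For nilpotent $G$ one could alternatively build everything explicitly through Kirillov's orbit method, identifying $\Ghat$ with the coadjoint orbit space $\fg^*/G$, whose standardness follows from Pukánszky's local closedness theorem.

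The main obstacle is the proof of the Type I property itself. The modern approach proceeds via Kirillov's orbit method and an induction on $\dim G$: one constructs irreducible representations by unitary induction from characters of polarizing subalgebras, verifies that every irreducible representation arises this way up to equivalence, and checks that the associated von Neumann algebras $\pi(C^*(G))''$ are factors of Type I. Once this step is in place, standardness of the Borel structure and uniqueness of the Plancherel measure become largely formal consequences of the general $C^*$- and von Neumann-algebraic machinery recalled above, and one recovers the statement as quoted from \cite{FR}.
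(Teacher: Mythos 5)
Your proposal is correct and follows the same route the paper implicitly relies on: the statement is recalled without proof and referred to the classical theory in \cite{FR}, which indeed rests on the Type~I property of connected simply connected nilpotent Lie groups, the Jacobson/Fell topology on $\Ghat$, Mackey's theorem on standardness of the Borel structure, the abstract Plancherel theorem of Segal--Mautner--Dixmier, and Kirillov's orbit method together with Puk\'anszky's theorem, all of which you invoke. No gaps.
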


In practical situations, the representations of a nilpotent Lie group can be explicitly computed using \emph{Kirillov's theory of coadjoint orbits}. Indeed, there exists an explicit homeomorphism between the dual set $\Ghat$ and the space of coadjoint orbits $\fg^*/ G$. In this parameterization, one can then get an expression of the Plancherel measure. 

\vspace{0.25cm}

In the case of the Heisenberg group $\Heis^d$, the following description of its dual is obtained. The coadjoint orbits are of two kinds. The first ones are given by the planes $\fv^* \times \{\lambda\}$ for $\lambda \in \fr^{*}\setminus\{0\}$, while the remaining ones are given by the singleton $\{\gamma\}$ for $\gamma\in\fv^*$.

From this picture, we can get explicit expressions of the representations. First, fix $\lambda \in \fr^{*}\setminus\{0\}$.
We introduce the irreducible unitary representation $\pi^\lambda$ of $\Heis$ defined on $\cH_{\pi^\lambda} = L^2(\mathfrak{p})$ by 
\begin{equation}
\label{eq:schrodingerrep}
 \pi^\lambda(h)f(\xi) = \exp\left(i \lambda r + \frac{i}{2}|\lambda| x\cdot y + i\sqrt{|\lambda|}\xi\cdot y\right)f\left(\xi+\sqrt{|\lambda|}x\right),
\end{equation}
where identified $\fr^*\setminus\{0\}$ with $\R\setminus\{0\}$ through the vector $R$.

The representations $\pi^\lambda$, $\lambda\in\fr^{*}\setminus\{0\}$, called the \emph{Schrödinger representations}, are infinite dimensional and are not equivalent to each other.
Moreover, according to the Stone-von Neumann theorem, each representation $\pi^\lambda$ is characterized by the action of the center $\Exp_\Heis(\R R)$ on $\cH_{\pi^\lambda}$. More precisely, there exists one (and only one) irreducible representation
$(\pi,\cH_\pi)$ (up to equivalence) such that the elements of the form $\Exp_\Heis(r R)$, $r\in\R$ are sent by $\pi$ on 
\begin{equation*}
 e^{i\lambda r}I_{\cH_\pi}.
\end{equation*}

We also introduce another realization of these representations, called the \emph{Fock-Bargmann representations}. To this end, we define a complex structure on $\fv$ by considering the identification
\begin{equation*}
    \fv \sim \mathfrak{p}\oplus i\mathfrak{q}.
\end{equation*}
We introduce the variable $z=x+iy$ and we denote by $h=(z,r)$ the elements of the Heisenberg group.
We then define the Fock space $\mathscr{F}^\lambda(\fv)$ to be the space of functions $F:\fv\rightarrow\C$ that are holomorphic if $\lambda>0$ and antiholomorphic otherwise, satisfying
\begin{equation*}
 \int_\fv |F(z)|^2 e^{-|\lambda| |z|^2}dz<\infty.
\end{equation*}
The Heisenberg group then acts on the Fock spaces as follows:
\begin{equation}
    \label{eq:repfockbargmann}
    \pi^\lambda_\mathscr{F}(h) F(w) = \exp\left(i \lambda r - \frac{\lambda}{4}|z|^2-\frac{\lambda}{2}w \overline{z}\right) F(w+z),\quad h=(z,r)\in\Heis^d.
\end{equation}

By the Stone-von Neumann theorem, we easily deduce that
\begin{equation*}
    [\pi^\lambda] = [\pi^\lambda_\mathscr{F}],\quad \lambda\in\R^*.
\end{equation*}
For an explicit formula of the unitary operator that intertwines these two representations, we refer to \cite{Fol1}[Chapter 1]. The merit of the Fock-Bargmann representations in our later analysis is that the variables of the first stratum are treated on an equal footing.

There also exist other unitary irreducible representations of $\Heis$, which are given by the characters of the first stratum $\fv$ in the following way: for $\gamma\in\fv^*$, we set 
\begin{equation*}
 \pi^{0,\gamma}(h) = e^{i\gamma(V)},\quad\mbox{where}\ h = \Exp_\Heis(V+rR),\ \mbox{with}\ V\in\fv, r\in\R.
\end{equation*}
With these two classes of representations one gets the full dual of $\Heis$:
\begin{equation*}
 \Hhat = \{[\pi^\lambda]\,:\,\lambda\in\fr^{*}\setminus\{0\}\}\sqcup\{[\pi^{0,\gamma}]\,:\,\gamma\in\fv^*\}.
\end{equation*}
We will denote by $\Hhat_{\infty}$ the subset of $\Hhat$ made of the Schrödinger representations:
\begin{equation*}
 \Hhat_\infty = \{[\pi^\lambda]\,:\,\lambda\in\fr^{*}\setminus\{0\}\},
\end{equation*}
also called the \emph{generic dual set}.
We can deduce from the coadjoint picture that $\Hhat_\infty$ is a dense open subset of $\Hhat$ which is homeomorphic to $\R^*$.
Moreover, the Plancherel measure $\mu_{\Hhat}$ is supported on $\Hhat_\infty$ and with the choice of Haar measure made we have 
\begin{equation*}
 d\mu_{\Hhat}(\pi^\lambda) = (2\pi)^{-(3d+1)}|\lambda|^d d\lambda,\quad \lambda\in\R^*.
\end{equation*}

\subsubsection{Infinitesimal representations} For a nilpotent Lie group $G$ with Lie algebra $\fg$ and a representation $\pi$ of $G$, we keep the same notation for the corresponding infinitesimal representation that acts on the enveloping algebra $\mathfrak{U}(\fg)$ of the Lie algebra. It is characterized by its action on $\fg$:
\begin{equation*}
    \pi(X) = \frac{d}{dt}\pi(\Exp_G(t X))_{|t=0},\ X\in\fg.
\end{equation*}
Through the infinitesimal representation, the enveloping algebra is seen acting on the space $\cH_{\pi}^\infty$ of smooth vectors, defined as the space of vectors $f\in\cH_\pi$ such that the map $g\in G\mapsto \pi(g)f\in\cH_\pi$ is smooth.

For instance, in the case of the Heisenberg group $\Heis$, for a Schrödinger representation $\pi^\lambda$ given by~\eqref{eq:schrodingerrep}, with $\lambda\in\fr^*\setminus\{0\}$, one has the following expressions for the action of the basis $(X_1,\dots,X_d,Y_1,\dots,Y_d,R)$: for $i\in\{1,\dots,d\}$,
\begin{equation*}
    \pi^\lambda(X_i) = \sqrt{|\lambda|} \partial_{\xi_i},\quad 
    \pi^\lambda(Y_i) = i\sqrt{|\lambda|} \xi_i\quad\mbox{and}\quad
    \pi^\lambda(R) = i\lambda,
\end{equation*}
while $\cH_{\pi^\lambda}^\infty$ identifies with $\cS(\R_\xi)$.
In particular, if we introduce the left-invariant differential operator 
\begin{equation*}
    \Delta_{\Heis^d} = \sum_{i=1}^d (X_{i}^2 + Y_{i}^2),
\end{equation*}
called the \emph{Heisenberg subLaplacian}, then, seen as an element of the $\mathfrak{U}(\fh^d)$, we get 
\begin{equation*}
    \pi^\lambda(-\Delta_{\Heis^d}) = |\lambda|\sum_{i=1}^d (-\partial_{\xi_i}^2 + \xi_{i}^2).
\end{equation*}

\subsubsection{Fourier transform} 

For a general nilpotent Lie group $G$, with Haar measure simply denoted by $dx$, we first define the \emph{group Fourier transform} for a function $\kappa$ in $L^1(G)$:
for an irreducible representation $\pi$ of $G$, we set 
\begin{equation*}
 \cF_G(\kappa)(\pi) = \hat{\kappa}(\pi) = \pi(\kappa) = \int_G \kappa(x) \pi(x)^* dx\in \mathcal{L}(\cH_\pi).
\end{equation*}

Note that if $\pi_1$ and $\pi_2$ are equivalent, i.e.~there exists an intertwining operator $\mathcal{I}$ such that $\mathcal{I}\circ \pi_1 = \pi_2 \circ \mathcal{I}$, 
then we have $\mathcal{I}\circ \cF_G(\kappa)(\pi_1) = \cF_G(\kappa)(\pi_2)\circ \mathcal{I}$. Therefore, we may consider the group Fourier transform of $\kappa$ at the 
equivalence class of the representation $\pi$, and $\cF_G(\kappa)$ can be seen as a measurable field of equivalence classes of bounded operators:
\begin{equation*}
 \cF_G(\kappa) = \{\hat{\kappa}(\pi)\in\mathcal{L}(\cH_\pi)\,:\,\pi\in\Ghat\}.
\end{equation*}
Note that for two functions $\kappa_1$ and $\kappa_2$ in $L^1(G)$, we have
\begin{equation*}
 \cF_G(\kappa_1) \circ \cF_G(\kappa_2) = \cF_G(\kappa_2 \star_G \kappa_1),\quad\mbox{where}\ \kappa_2 \star_G \kappa_1(x) = \int_G \kappa_2(y)\kappa_1(y^{-1}x)\,dy.
\end{equation*}
The Plancherel measure $\mu_{\Ghat}$ introduced in Proposition~\ref{prop:dualsettopology} is characterized by the \emph{Fourier inversion formula}:
\begin{equation}
    \label{eq:fourierinversion}
    \forall \kappa \in \cS(G),\,\forall x\in G,\quad \kappa(x) = \int_{\Ghat}{\rm Tr}_{\cH_\pi}\left(\pi(x)\hat{\kappa}(\pi)\right) d\mu_{\Ghat}(\pi).
\end{equation}

Equivalently, it is also characterized by the following property:
\begin{equation}
\label{eq:plancherelformula}
 \forall \kappa\in L^1(G)\cap L^2(G),\quad \int_G |\kappa(x)|^2\,dx = \int_{\Ghat} \|\hat{\kappa}(\pi)\|_{HS(\cH_\pi)}^2 d\mu_{\Ghat}(\pi),
\end{equation}
where $\|\cdot\|_{HS(\cH_\pi)}$ denotes the Hilbert-Schmidt norm on $\cH_\pi$. We denote by $L^2(\Ghat)$ the Hilbert space of measurable fields of Hilbert-Schmidt 
operators $$\sigma = \{\sigma(\pi)\in\mathcal{L}(\cH_\pi)\,:\,\|\sigma(\pi)\|_{HS(\cH_\pi)}<\infty,\  \pi\in\Ghat\},$$ (up to $\mu_{\Ghat}$-a.e.~equivalence) such that the following quantity is finite 
\begin{equation*}
 \|\sigma\|_{L^2(\Ghat)}^2 := \int_{\Ghat} \|\sigma(\pi)\|_{HS(\cH_\pi)}^2 d\mu_{\Ghat}(\pi).
\end{equation*}
Equation~\eqref{eq:plancherelformula} allows for a unitary extension of the group Fourier transform 
\begin{equation*}
 \cF_G: L^2(G)\rightarrow L^2(\Ghat).
\end{equation*}

However, it is still not a general enough space of definition of the group Fourier transform for our needs. To this end, we introduce the following space.

\begin{definition}
 \label{def:Linfty}
 The Banach space $L^\infty(\Ghat)$ is defined as the space of measurable field of bounded operators $\sigma= \{\sigma(\pi)\in\mathcal{L}(\cH_\pi)\,:\,\pi\in\Ghat\}$ 
 (up to $\mu_{\Ghat}$-a.e.~equivalence) on $\Ghat$ such that the following quantity is finite:
 \begin{equation*}
 \|\sigma\|_{L^\infty(\Ghat)} := \sup_{\pi\in\Ghat}\|\sigma\|_{\mathcal{L}(\cH_\pi)}.
 \end{equation*} 
\end{definition}

The supremum is understood as the essential supremum with respect to the Plancherel measure $\mu_{\Ghat}$. We check readily that $\cF_G(L^1(G)) \subset L^\infty(\Ghat)$, but this 
is not an equality. Indeed, it follows from Dixmier's full Plancherel theorem (see, e.g., \cite[Theorem B.2.32]{FR}) that $L^\infty(\Ghat)$ is a von Neumann algebra isomorphic to the subspace 
\begin{equation*}
 \mathcal{L}(L^2(G))^G,
\end{equation*}
of bounded operators on $L^2(G)$ commuting with left-translations. More precisely, for any $T\in \mathcal{L}(L^2(G))^G$, there exists a unique element $\hat{T}\in L^\infty(\Ghat)$ such
that 
\begin{equation*}
 \forall f\in L^2(G),\quad \cF_G(Tf) = \hat{T}\hat{f},
\end{equation*}
and moreover $\|T\|_{\mathcal{L}(L^2(G))} = \|\hat{T}\|_{L^\infty(\Ghat)}$.

From the Schwartz kernel theorem, it follows that for any $T\in\mathcal{L}(L^2(G))^G$, there exists a unique distribution $\kappa\in\mathcal{S}'(G)$ such that $Tf = f\star_G \kappa$ for $f\in\mathcal{S}(G)$.
The distribution $\kappa$ is called the (right) convolution kernel of $T$. By setting 
\begin{equation*}
 \cF_G(\kappa) := \hat{T},
\end{equation*}
we easily check that it extends the group Fourier transform to the space
\begin{equation*}
 \mathcal{K}(G) := \{\kappa\in\mathcal{S}'(G)\,:\,(f\mapsto f\star_G\kappa)\in\mathcal{L}(L^2(G))\},
\end{equation*}
and that $\cF_G:\mathcal{K}(G)\rightarrow L^\infty(\Ghat)$ is bijective.

\subsubsection{Further extension of the Fourier transform}
\label{subsubsect:extensionFourier}

We finish our presentation of the group Fourier transform in the particular case of the Heisenberg group.

First, for $s\in\R$, we introduce the notation $L_s^2(\Heis^d)$, the adapted Sobolev space on $\Heis^d$ as defined in \cite[Section 4.4]{FR}.

We now consider the following spaces of tempered distributions on $\Heis^d$: for $a,b \in\R$, we set
\begin{equation*}
    \cK_{a,b}(\Heis^d) := \{\kappa\in\cS'(\Heis^d)\,:\,(f\mapsto f\star \kappa)\in\mathcal{L}(L_{a}^2(\Heis^d),L_{b}^2(\Heis^d))\}.
\end{equation*}

The Schwartz kernel theorem readily gives that the space $\cK_{a,b}(\Heis^d)$ identifies with the Banach subspace of $\mathcal{L}(L_{a}^2(\Heis^d),L_{b}^2(\Heis^d))$ consisting of the left-invariant operators, i.e.~commuting with left-translations.

At the same time, we also define the space $L_{a,b}^\infty(\Hhat^d)$ as the space of measurable fields $\sigma = \{\sigma(\pi): \cH_{\pi}^{\infty}\rightarrow \cH_{\pi}^{\infty}\,:\,\pi\in\Hhat\}$ such that the measurable field
\begin{equation*}
    \left(1-\pi(\Delta_{\Heis^d})\right)^{b/2} \sigma \left(1-\pi(\Delta_{\Heis^d})\right)^{-a/2},
\end{equation*}
is an element of $L^\infty(\Hhat^d)$. The space $L_{a,b}^\infty(\Hhat^d)$ is a Banach space when endowed with the following norm
\begin{equation*}
    \|\sigma\|_{L_{a,b}^\infty(\Hhat^d)} = \| \left(1-\pi(\Delta_{\Heis^d})\right)^{b/2} \sigma \left(1-\pi(\Delta_{\Heis^d})\right)^{-a/2}\|_{L^\infty(\Hhat^d)}.
\end{equation*}

\begin{proposition}
    For $a,b\in\R$, the Banach spaces $\cK_{a,b}(\Heis^d)$ and $L_{a,b}^\infty(\Hhat^d)$ are isomorphic. More precisely, the Fourier transform $\cF_{\Heis^d}$ extends uniquely to an isomorphism $\cF_{\Heis^d}: \cK_{a,b}(\Heis^d) \rightarrow L_{a,b}^\infty(\Hhat^d)$ of Banach spaces, which also satisfies, for any $\kappa\in\cK_{a,b}(\Heis^d)$,
    \begin{equation*}
        \|\kappa\|_{\cK_{a,b}(\Heis^d)} = \|\cF_\Heis(\kappa)\|_{L_{a,b}^\infty(\Hhat^d)}.
    \end{equation*}
\end{proposition}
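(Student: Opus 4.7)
The plan is to reduce the statement to the already-recalled case $a=b=0$, where $\cF_{\Heis^d}:\cK(\Heis^d)\to L^\infty(\Hhat^d)$ is a bijective isometry of Banach spaces. The reduction proceeds by conjugating the right-convolution operator $T_\kappa:f\mapsto f\star\kappa$ with Bessel-type powers of the subLaplacian, which by definition of the adapted Sobolev spaces $L_s^2(\Heis^d)$ relate them isometrically to $L^2(\Heis^d)$.

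First, for each $s\in\R$, the operator $(1-\Delta_{\Heis^d})^{s/2}$ obtained from the spectral calculus of the essentially self-adjoint subLaplacian is a left-invariant isometric isomorphism $L_t^2(\Heis^d)\to L_{t-s}^2(\Heis^d)$ for every $t\in\R$. By the Schwartz kernel theorem applied to left-invariant operators, there exists a unique distribution $h_s\in\cS'(\Heis^d)$ such that $(1-\Delta_{\Heis^d})^{s/2}f=f\star h_s$ for all $f\in\cS(\Heis^d)$; its group Fourier transform is the measurable field $\hat h_s(\pi)=(1-\pi(\Delta_{\Heis^d}))^{s/2}$, where the right-hand side is defined by the spectral calculus for the essentially self-adjoint operator $\pi(-\Delta_{\Heis^d})$ on $\cH_\pi^\infty$.

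For $\kappa\in\cS'(\Heis^d)$, I would then introduce the conjugated kernel $\tilde\kappa:=h_{-a}\star\kappa\star h_b$, designed so that $T_{\tilde\kappa}$ equals $(1-\Delta_{\Heis^d})^{b/2}\circ T_\kappa\circ(1-\Delta_{\Heis^d})^{-a/2}$ by associativity of right-convolution. Since the two Bessel factors are isometric between the relevant Sobolev spaces, $\kappa\in\cK_{a,b}(\Heis^d)$ if, and only if, $\tilde\kappa\in\cK(\Heis^d)$, with identical norms. On the Fourier side, using the identity $\cF_{\Heis^d}(\kappa_1\star\kappa_2)=\cF_{\Heis^d}(\kappa_2)\cF_{\Heis^d}(\kappa_1)$ recalled in the excerpt, I would obtain
\[
\cF_{\Heis^d}(\tilde\kappa)(\pi)=(1-\pi(\Delta_{\Heis^d}))^{b/2}\,\cF_{\Heis^d}(\kappa)(\pi)\,(1-\pi(\Delta_{\Heis^d}))^{-a/2},
\]
whose $L^\infty(\Hhat^d)$-norm is by definition $\|\cF_{\Heis^d}(\kappa)\|_{L_{a,b}^\infty(\Hhat^d)}$. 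Combining these two norm-preserving identifications with the $a=b=0$ case then yields the announced isometric extension $\cF_{\Heis^d}:\cK_{a,b}(\Heis^d)\to L_{a,b}^\infty(\Hhat^d)$, and uniqueness of the extension is guaranteed by construction.

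The main obstacle I anticipate is the rigorous justification that the triple convolution $h_{-a}\star\kappa\star h_b$ makes sense in $\cS'(\Heis^d)$ when $\kappa$ is only a tempered distribution, and that associativity of convolution together with the convolution/Fourier identity extend to this distributional setting. The cleanest route is to work exclusively at the level of left-invariant bounded operators on Schwartz functions, where all compositions are unambiguously defined; the distributional kernel of the composition $(1-\Delta_{\Heis^d})^{b/2}\circ T_\kappa\circ(1-\Delta_{\Heis^d})^{-a/2}$ is then extracted by the Schwartz kernel theorem and left-invariance, and the Fourier-side identity follows from the functional calculus for $\pi(\Delta_{\Heis^d})$ combined with the density of Schwartz vectors in each $\cH_\pi$.
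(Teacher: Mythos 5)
The paper states this proposition without proof, treating it as a recalled consequence of the Fischer--Ruzhansky framework \cite{FR}; so there is no paper argument to compare against. Your reduction-by-conjugation strategy is nonetheless the natural and correct one: the key point, which you have right, is that $(1-\Delta_{\Heis^d})^{s/2}$ is a left-invariant isometric isomorphism $L^2_t(\Heis^d)\to L^2_{t-s}(\Heis^d)$ with convolution kernel $h_s$ whose Fourier side is $(1-\pi(\Delta_{\Heis^d}))^{s/2}$, so that conjugating $T_\kappa$ by Bessel potentials precisely converts the boundedness and norm condition defining $\cK_{a,b}$ into the boundedness and norm condition defining $L^\infty(\Hhat^d)$ applied to $(1-\pi(\Delta_{\Heis^d}))^{b/2}\,\cF_{\Heis^d}(\kappa)\,(1-\pi(\Delta_{\Heis^d}))^{-a/2}$, i.e.\ the norm defining $L^\infty_{a,b}(\Hhat^d)$. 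Your check of the direction of the convolution/Fourier identity $\cF_G(\kappa_1\star\kappa_2)=\cF_G(\kappa_2)\cF_G(\kappa_1)$ is correct, as is the resulting identity $\cF_{\Heis^d}(\tilde\kappa)=(1-\pi(\Delta_{\Heis^d}))^{b/2}\cF_{\Heis^d}(\kappa)(1-\pi(\Delta_{\Heis^d}))^{-a/2}$.

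The one genuine issue you flag --- giving meaning to $h_{-a}\star\kappa\star h_b$ when $\kappa$ is merely tempered --- is real, and your proposed fix is the right one: work entirely at the operator level. Concretely, $\kappa\in\cK_{a,b}$ if and only if $T_\kappa$ extends to a bounded left-invariant operator $L^2_a\to L^2_b$; then $(1-\Delta_{\Heis^d})^{b/2}\circ T_\kappa\circ(1-\Delta_{\Heis^d})^{-a/2}$ is bounded, left-invariant on $L^2$, hence lies in $\mathcal{L}(L^2(\Heis^d))^{\Heis^d}$ and has a convolution kernel $\tilde\kappa\in\cK(\Heis^d)$ with $\|\tilde\kappa\|_{\cK}=\|T_\kappa\|_{L^2_a\to L^2_b}=\|\kappa\|_{\cK_{a,b}}$; the Fourier identity then follows from the $a=b=0$ Plancherel theorem together with the spectral calculus of $\pi(-\Delta_{\Heis^d})$. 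Surjectivity is obtained by running this in reverse: given $\sigma\in L^\infty_{a,b}(\Hhat^d)$, the field $(1-\pi(\Delta_{\Heis^d}))^{b/2}\sigma(1-\pi(\Delta_{\Heis^d}))^{-a/2}$ lies in $L^\infty(\Hhat^d)$, hence is $\cF_{\Heis^d}(\tilde\kappa)$ for some $\tilde\kappa\in\cK(\Heis^d)$, and $\kappa$ is recovered as the convolution kernel of $(1-\Delta_{\Heis^d})^{-b/2}\circ T_{\tilde\kappa}\circ(1-\Delta_{\Heis^d})^{a/2}$. You should still note that extracting a \emph{tempered} distribution kernel from a bounded left-invariant $L^2_a\to L^2_b$ operator uses the continuous, dense embedding $\cS(\Heis^d)\hookrightarrow L^2_s(\Heis^d)$ for all $s\in\R$, so that such an operator restricts to a continuous map $\cS(\Heis^d)\to\cS'(\Heis^d)$; with that remark the argument is complete.
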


\subsubsection{Invariant classes of symbols}

We end this section on the harmonic analysis of the Heisenberg group by defining the notion of symbols that we will use in the following sections.

\begin{definition}
    \label{def:invsym}
    An \emph{invariant symbol} $\sigma$ on $\Hhat^d$ is a measurable field of operators $\sigma = \{\sigma(\pi):\cH_{\pi}^{+\infty}\rightarrow \cH_{\pi}^{-\infty}\,:\,\pi\in\Hhat^d\}$ over $\Hhat^d$, where we recall that $\cH_{\pi}^{+\infty}$ and $\cH_{\pi}^{-\infty}$ denote the spaces of smooth and distributional vectors on $\cH_\pi$ respectively.
\end{definition}

To define our classes, we introduce the notion of \emph{difference operators}.

\begin{definition}
    \label{def:diffop}
    Let $q\in C^\infty(\Heis^d)$ and $\sigma$ be an invariant symbol on $\Hhat^d$. We say that $\sigma$ is $\Delta_q$-differentiable when $\sigma$ is in $L_{a,b}^\infty(\Hhat^d)$ and $q \cF_{\Heis^d}^{-1}(\sigma)$ is in $L_{c,d}^\infty(\Hhat^d)$ for some $a,b,c,d \in \R$. We denote then 
    \begin{equation*}
        \Delta_q \sigma = \cF_{\Heis^d} \left(q \cF_{\Heis^d}^{-1}(\sigma)\right).
    \end{equation*}
 \end{definition}

Using the coordinates $(x,y,z)$ on $\Heis^d$ introduced in Section~\ref{subsubsect:heisenberg}, we consider the difference operators associated with the monomials $x,y$ and $z$, denoted respectively by $\Delta_x$, $\Delta_y$ and $\Delta_z$. For an index $\alpha \in \N^{2d+1}$, we define $\Delta^\alpha$ to be the difference operators associated with the smooth function $x_{1}^{\alpha_1}\cdots x_{d}^{\alpha_d}y_{1}^{\alpha_{d+1}}\cdots y_{d}^{\alpha_{2d}}z^{\alpha_{2d+1}}$.

We can now define our classes of symbols.

\begin{definition}
    \label{def:invsymbol}
    Let $m\in\R$. A invariant symbol $\sigma$ is said to be \emph{of order $m$}, or equivalently in the class $S^m(\Hhat^d)$, if for any index $\alpha\in \N^{2d+1}$,
    \begin{equation*}
        \Delta^\alpha \sigma \in L_{0,-m+[\alpha]}^\infty(\Hhat^d),
    \end{equation*}
    where $[\alpha] = \sum_{i=1}^{2d}\alpha_i + 2\alpha_{2d+1}$.
\end{definition}

The space $S^m(\Hhat^d)$ is a Fréchet space when endowed with the seminorms given by 
\begin{equation*}
    \|\sigma\|_{S^m(\Hhat^d),N} := \max_{[\alpha]\leq N} \|\Delta^\alpha \sigma\|_{L_{0,-m+[\alpha]}^\infty(\Hhat^d)}
    = \max_{[\alpha]\leq N}\left\|\left(1-\pi(\Delta_{\Heis^d})\right)^{\frac{m-[\alpha]}{2}}\Delta^\alpha \sigma\right\|_{L^\infty(\Hhat^d)}.
\end{equation*}

\begin{definition}
\label{def:smoothingsym}
    The class of \emph{smoothing symbols} is defined as the following intersection
    $$S^{-\infty}(\Hhat^d) = \bigcap_{m\in\R}S^m(\Hhat^d).$$ 
    It is endowed with the induced topology of projective limit.
\end{definition} 

Then, the results recalled above, or equivalently \cite[Lemma 5.4.13 and Theorem 5.4.9]{FR}, imply the following:
\begin{proposition}
    \label{prop:fourierschwartz}
    The group Fourier transform is an isomorphism between the topological vector spaces $\cS(\Heis)$ and $S^{-\infty}(\Hhat)$.
\end{proposition}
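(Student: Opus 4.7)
The plan is to exploit the Banach-space isomorphism $\cF_{\Heis^d}: \cK_{a,b}(\Heis^d) \to L_{a,b}^\infty(\Hhat^d)$ of the preceding proposition together with the defining identity $\Delta_q \hat{\kappa} = \cF_{\Heis^d}(q\kappa)$. Writing $q_\alpha(x,y,z) = x_1^{\alpha_1}\cdots y_d^{\alpha_{2d}} z^{\alpha_{2d+1}}$, the seminorms $\|\hat{\kappa}\|_{S^m(\Hhat^d),N}$ that define the Fréchet topology of $S^{-\infty}(\Hhat^d)$ read, through the above isomorphism, as the $\cK_{0,-m+[\alpha]}(\Heis^d)$-norms of $q_\alpha\kappa$ for $[\alpha]\leq N$.

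For the direct inclusion $\cF_{\Heis^d}(\cS(\Heis^d))\subset S^{-\infty}(\Hhat^d)$, I would use that $\cS(\Heis^d)$ is closed under multiplication by polynomials and stable under the fractional powers $(1-\Delta_{\Heis^d})^{s/2}$, $s\in\R$. For $\kappa\in\cS(\Heis^d)$, the function $(1-\Delta_{\Heis^d})^{s/2}(q_\alpha\kappa)$ then lies in $\cS(\Heis^d)\subset L^1(\Heis^d)$ for every $s\in\R$ and every multi-index $\alpha$. Right-convolution by an $L^1$-function is bounded on $L^2$, and $(1-\Delta_{\Heis^d})$ being left-invariant commutes with right-convolution; combining these, $q_\alpha\kappa\in\cK_{0,s}(\Heis^d)$, hence $\Delta^\alpha\hat{\kappa}\in L_{0,s}^\infty(\Hhat^d)$, for every $\alpha$ and every $s$. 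Setting $s=-m+[\alpha]$ and letting $m$ and $\alpha$ vary, this is exactly the condition $\hat{\kappa}\in S^{-\infty}(\Hhat^d)$, and continuity of the Fourier transform as a map $\cS(\Heis^d)\to S^{-\infty}(\Hhat^d)$ follows from the explicit control of one family of seminorms by the other.

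For the converse inclusion, given $\sigma\in S^{-\infty}(\Hhat^d)$ I would set $\kappa = \cF_{\Heis^d}^{-1}(\sigma)\in\cS'(\Heis^d)$ via the preceding proposition and show $\kappa\in\cS(\Heis^d)$. The hypothesis yields $q_\alpha\kappa\in\bigcap_{s\in\R}\cK_{0,s}(\Heis^d)$ for every $\alpha$, so that right-convolution by $q_\alpha\kappa$ is smoothing of infinite order. The Schwartz kernel theorem combined with the Sobolev embedding for the Heisenberg Sobolev scale already implies $q_\alpha\kappa\in C^\infty(\Heis^d)\cap L^\infty(\Heis^d)$; boundedness and rapid decay of $\kappa$ and of all its derivatives are then obtained by varying $\alpha$ over all coordinate directions and by applying integer powers of the left-invariant operator $(1-\Delta_{\Heis^d})$, which preserves the class of convolution kernels in $\bigcap_s\cK_{0,s}(\Heis^d)$. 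Bicontinuity follows by reversing the seminorm estimates of the direct inclusion.

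The main obstacle is this last extraction step: translating $\bigcap_s\cK_{0,s}$-kernel conditions, together with derivatives and polynomial multipliers, into the Euclidean Schwartz bounds defining $\cS(\Heis^d)$. This is exactly the content of \cite[Lemma 5.4.13 and Theorem 5.4.9]{FR}, whose proof relies on a careful bootstrap between Heisenberg-Sobolev regularity and Euclidean Schwartz seminorms exploiting the hypoellipticity of $1-\Delta_{\Heis^d}$.
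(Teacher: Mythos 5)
Your proposal is correct and takes essentially the same route as the paper, which itself offers no argument beyond the citation to \cite[Lemma 5.4.13 and Theorem 5.4.9]{FR}. Your expansion of the direct inclusion (multiplication by $q_\alpha$, stability of $\cS(\Heis^d)$ under $(1-\Delta_{\Heis^d})^{s/2}$, commutation of left-invariant operators with right-convolution, Young's inequality) is sound, and you correctly flag the converse extraction as the genuine content of the cited reference.
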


\subsection{Dual tangent bundle}

Following \cite{FFF24}, we introduce the dual tangent bundle and investigate its properties when associated with a compact manifold $(M^{2d+1},\eta,g)$ with a contact metric structure. Recall that this bundle is defined by
\begin{equation}
    \label{eq:dualtangentbundle}
 \Ghat M = \bigcup_{x\in M} (x,\widehat{G_x M}).
\end{equation} 
In the rest of the article, we will rather use the notation $\Ghat_x M$ for the fibers of $\Ghat M$. 

\subsubsection{Topology and local parameterization}

We can give this set a natural topology as follows. We introduce on $\fg^* M$, the dual bundle to the vector bundle $\fg M$, the equivalence relation $\sim_{Kir}$ defined by 
\begin{equation*}
    \forall (x_1,\ell_1),(x_2,\ell_2)\in \fg^* M,\quad
    (x_1,\ell_1)\sim_{Kir}(x_2,\ell_2) \iff x_1 = x_2\ \mbox{and}\ \ell_1 \in {\rm Ad}^*(G_{x_2} M)(\ell_2),
\end{equation*}
where ${\rm Ad}^*(G_{x_2} M)(\ell)$ for $\ell \in \fg_{x}^* M$, denotes the coadjoint orbits of $\ell$ in $\fg_{x}^* M$.
We redefine then the dual tangent bundle as the topological quotient space associated with this relation:
\begin{equation*}
    \Ghat M := \fg^* M /\sim_{Kir}.
\end{equation*}
This is coherent with Equation~\eqref{eq:dualtangentbundle}: denoting by $p:\fg^* M\rightarrow M$ the natural projection, this application descends on $\Ghat M$ as a continuous onto map
\begin{equation*}
    \hat{p}:\Ghat M \rightarrow M.
\end{equation*}
Moreover, for $x\in M$, we observe, thanks to the coadjoint picture of the unitary dual sets, that the fiber $\hat{p}^{-1}(x)$ is homeomorphic to $\Ghat_x M$, as desired.

\vspace{0.25cm}

Thanks to the existence of the local $\phi$-frames, it is possible to give parameterizations of $\Ghat M$.
Indeed, let $\bX=(X_1,\dots,X_d,Y_1,\dots,Y_d,R)$ be such a local $\phi$-frame on a neighborhood $U$, and $\varphi_{U}^\bX$ be the local trivialization of 
$\fg M$ associated with the adapted frame $\langle\bX\rangle$. 
Since for any $x\in U$, the frame $\langle\bX\rangle_x$ satisfies exactly the commutator relations of the Heisenberg Lie algebra, the representations of the associated 
nilpotent Lie group $G_x M$ are given by the one of $\Heis$. Explicitly, for any $\lambda \in \fr_{x}^* M\setminus o$ (where $\fr^* M$ stands for the dual bundle associated with $\fr M$ and $o$ denotes the null section), 
introducing the vector space
\begin{equation*}
 \mathfrak{p}^{\bX}_x = {\rm span}(\langle X_1\rangle_x,\dots,\langle X_d\rangle_x) \subset \fg_x M,
\end{equation*}
we define the irreducible unitary representation $\pi_{x}^\lambda$ on the Hilbert space $\cH_{\pi_{x}^\lambda} = L^2(\mathfrak{p}^{\bX}_x)$ by
\begin{equation}
\label{eq:repgrouptangent}
 \forall f\in\cH_{\pi_{x}^\lambda},\quad \pi_{x}^\lambda(h)f(\xi) = \exp\left(i \lambda z + \frac{i}{2}|\lambda| x\cdot y + i\sqrt{|\lambda|}\xi\cdot y\right)f\left(\xi+\sqrt{|\lambda|}x\right),
\end{equation}
where, for $h\in G_x M$, we wrote 
\begin{equation*}
 h = \Exp_{G_x M}(x\cdot\langle X\rangle_x + y\cdot \langle Y\rangle_x + z\langle R \rangle_x),\quad\mbox{with}\ x,y\in\R^d,\,z\in\R.
\end{equation*}
Similarly, for $\gamma\in \fv_{x}^* M$, we set 
\begin{equation*}
 \pi_{x}^{0,\gamma}(h) = e^{i\gamma(\langle V\rangle_x)},
\end{equation*}
where
\begin{equation*}
 h=\Exp_{G_x M}(\langle V\rangle_x + z\langle R\rangle_x),\ \mbox{with}\ \langle V\rangle_x\in\fv_x M, z\in\R.
\end{equation*}

We are thus led to introduce the map
\begin{equation*}
 \hat{\varphi}_{U}^\bX: U\times \Hhat \rightarrow \Ghat M_{|U},
\end{equation*}
defined as follows: for $x\in U$ and $\pi\in \Hhat$, if $[\pi] = [\pi^\lambda]$ for some $\lambda\in \fr^{*}\setminus\{0\}$, identifying $\fr^*$ and $\fr_{x}^* M$ through the dual basis of the vector $\langle R\rangle_x$, we set
\begin{equation*}
 \hat{\varphi}_{U}^\bX(x,\pi^\lambda) = (x,\pi_{x}^\lambda),
\end{equation*}
while if $[\pi] = [\pi^{0,\gamma}]$ for some $\gamma\in \fv^{*}$, identifying $\fv^*$ and $\fv_{x}^* M$ through the dual basis of $(\langle X_1\rangle_x,\dots,\langle X_d\rangle_x,\langle Y_1\rangle_x,\dots,\langle Y_d\rangle_x)$, we set
\begin{equation*}
 \hat{\varphi}_{U}^\bX(x,\pi^{0,\gamma}) = (x,\pi_{x}^{0,\gamma}).
\end{equation*}
Using the coadjoint orbit pictures of the unitary dual sets, we check readily that the map $\hat{\varphi}_{U}^\bX$ is an homeomorphism, making of $\Ghat M$ a locally trivial fibered space with fibers homeomorphic to $\Hhat^d$.

\subsubsection{Characteristic manifold and generic dual tangent bundle}
\label{subsubsect:charmanifold}

Keeping the same notation as in the previous section, if $(U_1,\mathbb{X_1})$ and $(U_2,\mathbb{X_2})$ are two overlapping local $\phi$-frames, then the map $\hat{\varphi}_{1,2}^\bX := \hat{\varphi}_{U_1}^{\bX_1,-1}\circ \hat{\varphi}_{U_2}^{\bX_2}: (U_1\cap U_2)\times \Hhat \rightarrow (U_1\cap U_2)\times \Hhat$ can be made explicit as follows.
For $x \in U_1\cap U_2$ and $\pi\in\Hhat$, we write $\tilde{\pi}\in\Hhat$ the representation given by 
\begin{equation*}
 \hat{\varphi}_{1,2}^\bX(x,\pi) = (x,\tilde{\pi}).
\end{equation*}
Suppose first that $\pi$ is in the same equivalence class as $\pi^\lambda$ for some $\lambda\in \fr^{*}\setminus\{0\}$. Then, as remarked in Section~\ref{subsubsect:unitarydualset}, to identify the equivalence class
of $\tilde{\pi}$, it is enough to observe the action of the center. Since the spaces $\fr^*$ and $\fr_{x}^* M$ are identified through the vector $\langle R\rangle_x \in\fr_x M$ and that this vector is identical in both 
the adapted frames $\langle \bX_1\rangle_x$ and $\langle \bX_2\rangle_x$, we easily deduce that for $r\in\R$, we have 
\begin{equation*}
 \tilde{\pi}(\Exp_\Heis(r \langle R\rangle_x)) = e^{i\lambda r}I_{\cH_{\tilde{\pi}}}.
\end{equation*}
Thus, we have $[\tilde{\pi}] = [\pi^\lambda]$ and the map $\hat{\varphi}_{1,2}^\bX(x,\cdot)$ is the identity on $\Hhat_\infty$.

We deduce that one can talk about the generic dual set of the groups $G_x M$ independently of the choice of the $\phi$-frame and that the transition maps are given by the identity on $\Hhat_\infty$. Similarly as in Section~\ref{subsubsect:unitarydualset}, we denote the resulting subset of $\Ghat_x M$ by $\Ghat_{\infty,x} M$. Considering only these subsets, we get the following definition.

\begin{definition}
    We define the \emph{generic dual tangent bundle}, denoted by $\Ghat_\infty M$, as the topological subset of $\Ghat M$ given by 
    \begin{equation*}
        \Ghat_\infty M := \{(x,[\pi])\,:\,[\pi]\in \Ghat_{\infty,x} M\}.
    \end{equation*}
    Moreover, $\Ghat_\infty M$ is open and dense in $\Ghat M$, and homeomorphic to $M\times \Hhat_\infty$.
\end{definition}

%We also introduce the notation $\hat{\varphi}_U^{\bX,\infty}: U\times \Hhat^d_\infty \rightarrow \Ghat_\infty M_{|U}$ for the restriction of the parametrization $\hat{\varphi}_U^\bX$ on the generic duals.%

From these considerations, it follows that the generic dual tangent bundle can be endowed with a smooth structure, since it is naturally identified with the characteristic manifold $\Sigma$ of the contact manifold $(M,\eta)$. Using the parameterization $j$ of $\Sigma$ by $M\times\R\setminus\{0\}$ given at the end of Section~\ref{subsubsect:generalitiescontact}, it results that the map
\begin{equation}
    \label{eq:sigmagenericdual}
    (x,\lambda\eta_x)\in \Sigma \mapsto (x,[\pi_{x}^\lambda])\in \Ghat_\infty M,
\end{equation}
is a homeomorphism and we ask it to be a diffeomorphism.

\vspace{0.25cm}

Finally, the parameterizations $\hat{\varphi}_U^{\bX}$ also allow us to define a natural \emph{smooth Plancherel system}. 

\begin{definition}
    \label{def:smoothPlancherel}
    A smooth Plancherel system $\{\hat{\mu}_{\Ghat_x M}\}_{x\in M}$ for $\Ghat M$ is a choice of Plancherel measure for each of the dual $\Ghat_x M$ for $x\in M$ such that the map $x\mapsto \int_{\Ghat_x M} f(x,\pi)\,d\hat{\mu}_{\Ghat_x M}(\pi)$ is smooth for any function $f\in C_c(\Ghat M)$ which is smooth with respect to the base $M$.
\end{definition}

A function $f\in C_c(\Ghat M)$ is said to be smooth with respect to the base $M$ if and only if, when pulled back on $M\times \Hhat^d$ by any local parametrization $\hat{\varphi}_U^\bX$, it is smooth with respect to the first variable.

Using an atlas of $\phi$-frame, it is possible to define such a smooth Plancherel system by the use of the local parametrization $\hat{\varphi}_U^\bX$ and by pushing forwards the measure $d\nu(x)\otimes d\hat{\mu}_{\Hhat^d}(\pi)$ defined on $M\times \Hhat^d$. The previous discussion shows that such a definition makes sense since on the overlaps $U_1\cap U_2$, the transition maps $\hat{\varphi}_{1,2}^\bX(x,\cdot)$ are the identity on $\Hhat^d_\infty$, which is the support of the Plancherel measure $\hat{\mu}_{\Hhat^d}$. We will denote by $\hat{\mu}_{\Ghat M}$ the resulting Plancherel system and, by the same notation, the measure it induces on $\Ghat M$.

Moreover, this smooth Plancherel system is \emph{compatible} with the smooth Haar system $\mu_{GM}$ defined in Section~\ref{subsubsect:smoothhaarsys}, in the following sense: for any $x\in M$ and for any function $\kappa \in \mathcal{S}(G_x M)$, we have
\begin{equation}
    \label{eq:compatiblesystems}
    \int_{G_x M} |\kappa(v)|^2\,d\mu_{G_x M}(v) = \int_{\Ghat_x M} \|\hat{\kappa}(\pi)\|_{HS(\mathcal{H}_\pi)}^2\,d\hat{\mu}_{\Ghat_x M}(\pi).
\end{equation}

Restricted to the generic dual tangent bundle $\Ghat_\infty M$, the measure $\hat{\mu}_{\Ghat M}$ can be expressed as follows using the identification given by Equation~\eqref{eq:sigmagenericdual}:
\begin{equation*}
    d\hat{\mu}_{\Ghat M}(x,\pi^\lambda) = \frac{1}{(2\pi)^{3d+1}} d\nu(x)\otimes |\lambda|^dd\lambda.
\end{equation*}

\begin{remark}
    Observe that, up to a multiplicative constant, the measure $\hat{\mu}_{\Ghat M}$ coincides with the symplectic volume $|\omega_{|\Sigma}^{d+1}|$ described by Equation~\eqref{eq:symplecticvolume}.
\end{remark}

\subsubsection{Fock Hilbert bundle} 

The previous section allows us to think of the generic dual tangent bundle $\Ghat_\infty M$ as the characteristic manifold $\Sigma$, and thus as a global geometric object rather than only through local parameterizations. However, this is not the case for the representations defined by Equation~\eqref{eq:repgrouptangent} through the use of local $\phi$-frames, the reason being that one makes the arbitrary choice of the Lagrangian decomposition $\fv_x M = \mathfrak{p}_x^\bX \oplus\mathfrak{q}_x^\bX$.

To remedy this issue, one can use the Fock-Bargmann representations of the Heisenberg group defined by Equation~\eqref{eq:repfockbargmann}. More precisely, identifying $\fv_x M$ with $\mathfrak{p}_x^\bX \oplus i\mathfrak{q}_x^\bX = \C^d$ thanks to the $\phi$-frame $\bX$, we consider the Hilbert space $\mathscr{F}^\lambda(\mathfrak{v}_x M)$, defined as the space of functions $F:\mathfrak{v}_x M\rightarrow\C$ that are holomorphic if $\lambda >0$ and anti-holomorphic if $\lambda <0$, and such that 
\begin{equation*}
     \int_{\mathfrak{v}_x M} |F(z)|^2 e^{-\frac{|\lambda|}{2}|z|^2}dz < +\infty.
\end{equation*}
We then consider the representations of $G_xM$ defined by: for all $F\in\mathscr{F}^\lambda(\mathfrak{v}_x M)$,
\begin{equation*}
    \pi^\lambda_{x,\mathscr{F}}(g) F(w) = \exp\left(i \lambda r - \frac{\lambda}{4}|z|^2-\frac{\lambda}{2}w \overline{z}\right) F(w+z),\quad w\in\mathfrak{v}_x M = \mathfrak{p}_x M\oplus i \mathfrak{q}_x M,
\end{equation*}
where, for $g\in G_x M$, we wrote $z = x+iy$ with
\begin{equation*}
 g = \Exp_{G_x M}(x\cdot\langle X\rangle_x + y\cdot \langle Y\rangle_x + r\langle R \rangle_x),\quad\mbox{with}\ x,y\in\R^d,\,r\in\R.
\end{equation*}

However, the expression of this representation can be rewritten so as to be independent of the choice of the $\phi$-frame $\bX$ and thus of the splitting $\fv_x M = \mathfrak{p}_x^\bX \oplus\mathfrak{q}_x^\bX$.

First, for any point $\sigma = (x,\lambda\eta_x)\in\Sigma \sim \Ghat_\infty M$, we identify the fiber $\mathfrak{v}_x M$ with the fiber $N_\sigma \Sigma$ through the isomorphism $\Xi_\sigma$ defined by Equation~\eqref{eq:defXi}. Through this isomorphism, both the endomorphism field $\phi$ and the metric $g$ can be lifted to $\phi^{N\Sigma}$ and $g^{N\Sigma}$ respectively on the bundle $N \Sigma$ in the following way: we set
\begin{equation*}
    \phi_\sigma^{N\Sigma} = {\rm sgn}(\lambda(\sigma))\,\Xi^*(\phi_x),
\end{equation*}
as well as
\begin{equation}
    g_\sigma^{N\Sigma} = \lambda(\sigma) \Xi^*(g_x),
\end{equation}
which is a positive bilinear form when $\lambda(\sigma) >0$ and negative otherwise.

We introduce the following sesquilinear form (with respect to the complex structure $\phi_\sigma^{N\Sigma}$) on $N_\sigma \Sigma$: for all $V,W\in N_\sigma \Sigma$, we set
\begin{equation*}
    h_\sigma^{N\Sigma}(V,W) := g_\sigma^{N\Sigma}(V,W) + i \omega_\sigma^{N\Sigma}(V,W),
\end{equation*}
where the symplectic form $\omega_\sigma^{N\Sigma}$ was defined in Section~\ref{subsubsect:symnormbundle}.
Similarly to $g^{N\Sigma}$, $h^{N\Sigma}$ is positive on $\lambda >0$ and negative otherwise. 

Then, writing again $z = x+iy$ and $V = x\cdot\langle X\rangle_x + y\cdot \langle Y\rangle_x$, it is straightforward to check that by construction of the $\phi$-frame, we have 
\begin{equation*}
    \lambda(\sigma)|z|^2 = h_\sigma^{N\Sigma}(V,V) \quad\mbox{and}\quad \lambda(\sigma) w \Bar{z}  = h_\sigma^{N\Sigma}(W,V),
\end{equation*}
where $w = p+iq$ and $W =p\cdot\langle X\rangle_x + q\cdot \langle Y\rangle_x$.

Thus, the Fock space $\mathscr{F}^\lambda(\mathfrak{v}_x M)$ coincides with the space
\begin{multline*}
    \mathscr{F}(N_\sigma \Sigma) := \biggl\{F:N_\sigma\Sigma\rightarrow\C\,:\, F\ \mbox{is}\ \phi_\sigma^{N\Sigma}\mbox{-holomorphic}\\ \mbox{ and }\int_{N_\sigma\Sigma} |F(W)|^2 e^{-\frac{1}{2}|g_\sigma^{N\Sigma}(W,W)|}d{\rm Leb}_\sigma^{N\Sigma}(W) < +\infty\biggr\},
\end{multline*}
where ${\rm Leb}_\sigma^{N\Sigma}$ denotes the Lebesgue measure on $N_\sigma\Sigma$ associated with the metric $g_\sigma^{N\Sigma}$. Then, the representation $\pi_{x,\mathscr{F}}^\lambda$ of the group $G_x M$ coincides with the representation $\pi_{\sigma,\mathscr{F}}$ defined as follows:
\begin{equation}
    \label{eq:repfocktgtgroup}
    \pi_{\sigma,\mathscr{F}}(g) F(W) = \exp\left(i\lambda(\sigma) r - \frac{1}{2}h_\sigma^{N\Sigma}(W+\frac{1}{2}V,V)\right) F(W + V),
\end{equation}
where $g = \Exp_{G_x M} (V+ r\langle R\rangle)$ with $V\in \mathfrak{v}_x M$ and $r\in\R$. Observe that this formula does not depend on the choice of a $\phi$-frame.

\begin{definition}
    We denote by $\mathscr{F}(N\Sigma)$ the collection of all these Fock spaces:
    \begin{equation*}
        \mathscr{F}(N\Sigma) := \bigcup_{\sigma\in\Sigma}\mathscr{F}(N_\sigma \Sigma),
    \end{equation*}
    called the \emph{Fock Hilbert bundle}.
\end{definition}

Since $N\Sigma$ is a smooth bundle over $\Sigma$ endowed with a $SU(d)$-group structure, it is straightforward to prove that the set $\mathscr{F}(N\Sigma)$ is a smooth Hilbert bundle in the sense of \cite{Lang}.
Finally, the results of the previous sections and Equation~\eqref{eq:repfocktgtgroup} allow us to write
\begin{equation}
    \label{eq:fockrealisation}
    \mathscr{F}(N\Sigma) \sim \int_{\Ghat_\infty M}^\oplus \cH_{\pi_x} d\mu_{\Ghat M}(x,\pi_x).
\end{equation}

\section{Semiclassical pseudodifferential calculus}
\label{sect:semipseudocalc}

\subsection{Symbol classes} 
\label{subsect:symbolclasses}
To begin with, first recall that we defined at the end of Section~\ref{subsubsect:phiframe} a natural smooth Haar system denoted by $\{\mu_x\}_{x\in M}$, as well as a compatible smooth Plancherel system in Section~\ref{subsubsect:charmanifold}. This allows us to define unequivocally the spaces $L_{a,b}^\infty(\Ghat_x M)$, $x\in M$, for $a,b\in\R$, as in Section~\ref{subsubsect:extensionFourier}, as well as their associated norms $\|\cdot\|_{L_{a,b}^\infty(\Ghat_x M)}$.

\begin{definition}
    \label{def:symbols}
    A \emph{symbol} $\sigma= \{\sigma_\hbar\}_{\hbar>0}$ on $\Ghat M$ is a collection of invariant symbols 
    \begin{equation*}
       \sigma(x,\cdot) = \{\sigma(x,\pi):\cH_{\pi}^{\infty}\rightarrow\cH_{\pi}^{\infty}\,:\,\pi\in\Ghat_x M\},
    \end{equation*}
    on $\Ghat_x M$, parameterized by $x\in M$ and depending on the semiclassical parameter $\hbar$. 
    We also say that $\sigma$ admits a \emph{convolution kernel} if for any $x\in M$, $\sigma(x,\cdot)\in L_{a,b}^\infty(\Ghat_x M)$ for some $a,b\in\R$.
\end{definition}

If a symbol $\sigma$ on $\Ghat M$ admits a convolution kernel, then for $x\in M$, we denote by
\begin{equation*}
    \kappa_{\sigma,x} = \cF_{G_x M, \mu_x}^{-1}(\sigma(x,\cdot)),
\end{equation*}
the corresponding tempered distribution $\kappa_{\sigma,x}\in \cS'(G_x M)$ given by the inverse Fourier transform taken with respect to the Plancherel measure $\hat{\mu}_{\Ghat_x M}$. If a local Heisenberg frame $(\bX,U)$ is given, by the group exponential $\Exp_{G_x M}$, we can consider the corresponding distribution $\kappa_{\sigma,x}^{\bX}$ on $\fh^d$:
\begin{equation}
    \label{eq:localassociatedkernel}
    \kappa_{\sigma,x}^{\bX}(v) := \kappa_{\sigma,x}\left(\Exp_{x}^{\bX}(v)\right) \in\cS'(\fh^d).
\end{equation}

Such a collection of distributions can, in turn, be seen as a family of invariant symbols on $\Hhat^d$. We introduce thus the notation
\begin{equation}
\label{eq:symframe}
    \forall x \in U,\ \sigma^\bX(x,\cdot) := \cF_{\Heis^d}(\kappa_{\sigma,x}^\bX).
\end{equation}

\begin{definition}
\label{def:symclassGhatM}
    For $m\in\R$, the class of symbols $S^m(\Ghat M)$ is defined as the set of symbols $\sigma = \{\sigma_\hbar\}_{\hbar>0}$ on $\Ghat M$ admitting convolution kernels and such that, for any local Heisenberg frame $(\bX,U)$ as defined in Section~\ref{subsubsect:phiframe}, the map $x\in U\mapsto \sigma^\bX(x,\cdot)\in S^m(\Hhat)$ is well defined and smooth. In particular, for any compact set $\mathcal{C}\subset U$ and any index $\beta\in\N^{2d+1}$, we have uniformly for $\hbar$ small,
    \begin{equation}
    \label{eq:seminormSm}
        \forall N\in\N,\quad \sup_{x\in\mathcal{C}}\|D_{\bX}^\beta \sigma_\hbar^\bX(x,\cdot)\|_{S^m(\Hhat),N} < +\infty,
    \end{equation}
    where we use the notation $D_{\bX}^\beta = X_1^{\beta_1}\cdots X_d^{\beta_d}Y_1^{\beta_{d+1}}\cdots Y_d^{\beta_{2d}}R^{\beta_{2d+1}}$.
\end{definition}

\begin{remark}
    One could use general adapted frames to define the symbol classes, but then one would need to introduce the notion of Rockland symbols, as in \cite{FFF24}.
\end{remark}

To know whether or not a symbol $\sigma$ on $\Ghat M$ is in the class $S^m(\Ghat M)$, it is enough to check the condition given by Definition~\ref{def:symclassGhatM} for an open cover of $M$ by local Heisenberg frames. 

For $m\in\R$, we endow the set $S^m(\Ghat M)$ with the seminorms given by Equation~\eqref{eq:seminormSm}. Using a finite cover of $M$ by compact sets included in local Heisenberg frames, the seminorms obtained this way are enough to characterize the topology of $S^m(\Ghat M)$, which turns it into a Fréchet space.

Note that we have the inclusions
\begin{equation*}
    m_1 \leq m_2 \implies S^{m_1}(\Ghat M)\subset S^{m_2}(\Ghat M).
\end{equation*}
This observation leads to the following definition.

\begin{definition}
\label{def:smoothsym}
    We define the class of \emph{smoothing symbols} $S^{-\infty}(\Ghat M)$ by
    \begin{equation*}
        S^{-\infty}(\Ghat M) := \bigcap_{m\in\R} S^m(\Ghat M),
    \end{equation*}
    equipped with the topology of the projective limit.
\end{definition}

However, the symbols we will deal with belong to the subspace of symbols that are \emph{admissible}, whose definition we now give.

\begin{definition}
    \label{def:symadmissible}
    For $m\in \R$, a symbol $\sigma = \{\sigma_\hbar\}_{\hbar>0}$ in $S^m(\Ghat M)$ is said to be admissible if, for any $N\in N$, there exists symbols $\sigma_i$, $i= 0,\dots,N-1$, in respectively $S^{m-i}(\Ghat M)$ and independent of $\hbar$, as well as a symbol $r^N = {r_\hbar^N}_{\hbar>0}$ in $S^{m-N}(\Ghat M)$, such that
    \begin{equation*}
        \sigma_\hbar = \sum_{i=0}^{N-1}\hbar^i \sigma_i + \hbar^Nr_\hbar^N.
    \end{equation*}
\end{definition}

Finally, the following proposition aims at giving a realization of these symbols in terms of the Fock Hilbert bundle $\mathcal{F}(N\Sigma)$. 

\begin{proposition}
    The restriction on $\Ghat_\infty M$ of any symbol $\sigma$ of order 0 defines a smooth section on $\Sigma$ of the Banach bundle of endomorphisms on the Fock Hilbert bundle, denoted by ${\rm End}(\mathscr{F}(N\Sigma))$.
\end{proposition}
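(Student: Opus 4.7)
The plan is to reduce the statement to local trivializations by $\phi$-frames, verify the required properties there using the coincidence of the Schrödinger and Fock--Bargmann realizations, and finally check that these local pictures glue correctly under the $U(d)\times 1$ transition maps.

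First, let me unpack what needs to be shown. Given $\sigma\in S^0(\Ghat M)$ and $s = (x,\lambda\eta_x)\in\Sigma\simeq\Ghat_\infty M$, the operator $\sigma(x,\pi_x^\lambda)$ lives on $\cH_{\pi_x^\lambda}$, which via the Stone--von~Neumann theorem is unitarily equivalent to the Fock space $\mathscr{F}(N_s\Sigma)$. I need to show (i) the resulting operator is bounded, (ii) the assignment $s\mapsto$ operator is smooth with respect to the smooth structure on ${\rm End}(\mathscr{F}(N\Sigma))$, and (iii) the assignment is independent of the $\phi$-frame used.

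For (i), fix a local $\phi$-frame $(\bX,U)$ and use the parameterization $\hat{\varphi}_U^\bX$. By Definition~\ref{def:symclassGhatM}, the associated map $x\in U\mapsto\sigma^\bX(x,\cdot)\in S^0(\Hhat^d)$ is smooth. For any $\pi^\lambda\in\Hhat_\infty$, the condition $\sigma^\bX(x,\cdot)\in S^0(\Hhat^d)$ together with Definition~\ref{def:invsymbol} (taking $\alpha=0$) ensures in particular that $\sigma^\bX(x,\pi^\lambda)$ is bounded on $\cH_{\pi^\lambda}=L^2(\mathfrak{p}_x^\bX)$. Then, identifying $\cH_{\pi^\lambda_x}=L^2(\mathfrak{p}_x^\bX)$ with the Fock space $\mathscr{F}^\lambda(\mathfrak{v}_x M)$ through the unitary intertwiner $\mathcal{I}_{x,\lambda}^\bX$ between the Schrödinger and Fock--Bargmann realizations (whose explicit formula, e.g. from \cite{Fol1}, depends smoothly on $\lambda\in\R^*$ and on the frame data), one obtains a bounded operator on $\mathscr{F}^\lambda(\mathfrak{v}_x M)$. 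Using the identification $\Xi_s:N_s\Sigma\to\cD_{x}$ and the formula~\eqref{eq:repfocktgtgroup}, the Fock space $\mathscr{F}^\lambda(\mathfrak{v}_x M)$ is canonically identified with $\mathscr{F}(N_s\Sigma)$.

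For (ii), smoothness in $s$ is inherited as follows: the map $x\mapsto\sigma^\bX(x,\cdot)$ is smooth with values in $S^0(\Hhat^d)$, hence in particular with values in $L^\infty(\Hhat^d)$. Parameterizing $\Hhat_\infty^d$ by $\lambda\in\R^*$, the family $(x,\lambda)\mapsto\sigma^\bX(x,\pi^\lambda)\in\mathcal{L}(L^2(\mathfrak{p}_x^\bX))$ is smooth on $U\times\R^*$. Conjugating by the smooth intertwiner $\mathcal{I}_{x,\lambda}^\bX$ and applying $\Xi_s^*$, we obtain a smooth local section $s\mapsto\tilde\sigma^\bX(s)$ of ${\rm End}(\mathscr{F}(N\Sigma))_{|\Sigma\cap\hat p^{-1}(U)}$, which matches the smooth Hilbert-bundle structure on $\mathscr{F}(N\Sigma)$ coming from the $SU(d)$-structure on $N\Sigma$.

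For (iii), the invariance under change of $\phi$-frame relies on Section~\ref{subsubsect:charmanifold}: the transition maps $\hat\varphi_{1,2}^{\bX}(x,\cdot)$ act as the identity on $\Hhat_\infty^d$ (only the center matters up to equivalence of Schrödinger representations), while the change of Lagrangian decomposition $\fv_xM=\mathfrak{p}_x^\bX\oplus\mathfrak{q}_x^\bX$ on the representation space is implemented by a unitary operator coming from the $U(d)\times 1$-action that intertwines the two Schrödinger realizations and matches, on the Fock side, with the canonical structure of $\mathscr{F}(N_s\Sigma)$ (which by construction~\eqref{eq:repfocktgtgroup} is frame-independent). Therefore the local sections $\tilde\sigma^{\bX_1}$ and $\tilde\sigma^{\bX_2}$ coincide on overlaps, and define a global smooth section.

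The main obstacle I anticipate is a clean bookkeeping of the intertwiner $\mathcal{I}^\bX_{x,\lambda}$ and the verification that smoothness passes through it in the operator-norm topology. The remaining steps are either a direct consequence of Definition~\ref{def:symclassGhatM} (boundedness, smoothness in $x$) or a formal check using the already-established invariance properties of the Fock--Bargmann picture over $\Sigma$.
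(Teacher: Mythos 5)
Your proposal follows essentially the same route as the paper: reduce to a local $\phi$-frame, use the definition of $S^0(\Ghat M)$ to get boundedness and $x$-smoothness, pass to the Fock--Bargmann realization via the canonical identifications of Section~\ref{subsubsect:charmanifold}, and check frame-independence through the $U(d)\times 1$ transition maps, which is exactly what the paper absorbs into Equation~\eqref{eq:fockrealisation}. The one place where you are slightly underhanded is step (ii): smoothness of $x\mapsto\sigma^\bX(x,\cdot)$ with values in $L^\infty(\Hhat^d)$ does not by itself give pointwise smoothness of $\lambda\mapsto\sigma^\bX(x,\pi^\lambda)$, since $L^\infty(\Hhat^d)$ only controls an essential supremum; what is needed is the characterization of $S^0(\Hhat^d)$ given in \cite[Theorem 6.5.1]{FR}, which is precisely what the paper cites to obtain smoothness along $\Hhat_\infty^d\simeq\R^*$. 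With that citation supplied, your argument coincides with the paper's, merely spelled out in more detail.
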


Indeed, for a symbol $\sigma \in S^0(\Ghat M)$, considering its restriction on $\Ghat_\infty M\sim\Sigma$, it immediately follows from Definition~\ref{def:symbols} and from Equation~\eqref{eq:fockrealisation}, that $\sigma$ can be seen as a field of bounded operators acting on each fiber $\mathcal{F}(N_\sigma \Sigma)$, $\sigma\in \Sigma$, of the Fock Hilbert bundle. Moreover, simply by Definition~\ref{def:symclassGhatM} of the class $S^0(\Ghat M)$, as well as by the characterization of the invariant class of symbols $S^0(\Hhat)$ obtained by \cite[Theorem 6.5.1]{FR}, we observe that this field of operators is smooth with respect to the variable $\sigma \in \Sigma$.

\subsection{Semiclassical pseudodifferential calculus} 
\label{subsubsect:semiquantization}

We recall here the definition of the global semiclassical pseudodifferential calculus on the contact manifold $(M,\eta)$ described for filtered manifolds in \cite{BFF25}, as well as introduce the notion of microlocality with respect to the generic dual tangent bundle $\Ghat_\infty M$.

\subsubsection{Local semiclassical quantization}
\label{subsubsect:locsemiquantiz}
We first define the local quantization procedure of symbols for any given local adapted frame $(\bX,U)$, not necessarily a Heisenberg frame.

\begin{definition}
    \label{def:cutoff}
    A cutoff for $\exp^\bX$ is a smooth function $\chi\in C^\infty(M\times M)$ valued in $[0,1]$, properly supported in $\exp^\bX(U_\bX)\subset U\times U$, and equal identically to 1 in a neighborhood of $\{(x,x)\,:\,x\in U\}$.
\end{definition}

Given a cutoff function $\chi$, we will use the shorthand notation
\begin{equation*}
    \chi_x(y) :=\chi(x,y).
\end{equation*}

Now for a symbol $\sigma \in S^m(\Ghat M)$, denoting by $\kappa_{\sigma}$ the associated convolution kernel in $\cS'(GM)$, we define the operator $\Op_{\hbar}^{\bX,\chi}(\sigma)$ at any function $f\in C^\infty(M)$ by 
\begin{equation}
    \label{eq:localquantization}
    \forall x \in U,\quad \Op_{\hbar}^{\bX,\chi}(\sigma)f(x) := \int_{w\in G_x M} \kappa_{\sigma,x}^\hbar(w) (\chi_x f)\left(\exp_{x}^\bX(-\Ln_{x}^\bX(w))\right)d\mu_x(w),
\end{equation}
where the distribution $\kappa_{\sigma,x}^\hbar\in \cS'(G_x M)$ is defined by 
\begin{equation*}
    \kappa_{\sigma,x}^\hbar(w) := \hbar^{-Q} \kappa_{\sigma,x}(\delta_{\hbar^{-1}} w)
\end{equation*}
and the integral in Equation~\eqref{eq:localquantization} is to be understood in a distributional sense. 

\vspace{0.25cm}

If $(\bX,U)$ is a $\phi$-frame, we can also write this local quantization as
\begin{equation*}
    \Op_{\hbar}^{\bX,\chi}(\sigma)f(x) = \int_{v \in \R^n}  \kappa_{\sigma,x}^{\bX,\hbar} (v) \ (\chi_x f)(\exp^\bX_x (- v)) \, dv,
\end{equation*}
where $\kappa_{\sigma,x}^{\bX,\hbar}(v) := \hbar^{-Q} \kappa_{\sigma,x}^\bX(\delta_{\hbar^{-1}} v)$, by the definition of the smooth Haar system $\mu$.
Performing the change of variable $y=\exp^\bX_x(-v)$ in Equation~\eqref{eq:localquantization}, or more accurately considering the pullback of $dv$ via $v\mapsto \exp^\bX_x (- v)$, 
the quantization formula is also given by
\begin{equation}\label{eq_int_kernel}
	\Op^{\bX,  \chi}_\hbar(\sigma)f(x)
=\hbar^{-Q}
\int_{M}  \kappa_{\sigma,x}^{\bX} (- \delta_\hbar^{-1} \ln^\bX_x y) \ (\chi_x f)(y)\ \jac_{y} (\ln^\bX_x) \ d{\rm vol}(y),
\end{equation}
where $\jac_{y} (\ln^\bX_x)$ denotes the smooth Radon-Nykodym derivative $\frac{d \ln^\bX_x}{dy} $  of $(\exp^\bX_x (- v))^*dv$ against the volume form ${\rm vol}$.
The latter formula provides us with the integral kernel of $\Op_\hbar^{\bX,\chi}(\sigma)$, given by
\begin{equation}
    \label{eq:integral_kernel}
    (x,y)\mapsto  \hbar^{-Q} \kappa_{\sigma,x}^{\bX} (- \hbar^{-1}  \ln^\bX_x (y)) \ \chi_x (y)\ \jac_{y} (\ln^\bX_x)=:K_\hbar^\bX(x,y).
\end{equation}

\vspace{0.25cm}

If now $(\tilde{\bX},U)$ is a Darboux frame, the corresponding local semiclassical quantization is related to the Kohn-Nirenberg semiclassical quantization $\Op_{\Heis^d,\hbar}$ on the Heisenberg group $\Heis^d$ (as defined in \cite{FL, FKF19, FM, FR}) in the following way. By choosing an origin point $x_0$ in $U$, the exponential map
\begin{equation*}
    \gamma :=\exp_{x_0}^{\tilde{\bX}}: V\subset \Heis^d\rightarrow U, 
\end{equation*}
defines a strict contact diffeomorphism (up to taking $U$ smaller), where we identify the Heisenberg group $\Heis^d$ with its Lie algebra and $V$ is an open neighborhood of $0$. Observe that since $\tilde{\bX}$ is a Darboux frame, Theorem~\ref{thm:composgeomexp} is valid without any remainder term. In particular, for $v\in \Heis^d\sim\R^{2d+1}$, we can write
\begin{equation*}
    \exp_x^{\tilde{\bX}}(v)= \gamma(v*_{\Heis^d}\gamma^{-1}(x)).
\end{equation*}

In the trivialization $(x,v)\in U\times\Heis^d$ of the tangent group bundle via the local frame $\tilde{\bX}$, the smooth Haar system $\{\mu_x\}_{x\in M}$, as defined in Section~\ref{subsubsect:smoothhaarsys}, writes
\begin{equation*}
    \forall x\in U,\quad d\mu_x(v) = c(x)dv,
\end{equation*}
with $c$ a smooth positive function. Moreover, recall that we set $\kappa_{\sigma,x} = \cF_{G_x M, \mu_x}^{-1}(\sigma(x,\cdot))$ for $x\in U$. Thus, it follows that using the smooth Haar system $\{\tilde{\mu}_x\}_{x\in U}$ induced by the Darboux frame $\tilde{\bX}$, we obtain
\begin{equation}
    \label{eq:symDarboux}
    \forall v\in \R^{2d+1},\quad\kappa_{\sigma,x}^{\tilde{\bX}}(v) = c(x)^{-1}\mathcal{F}_{G_x M, \tilde{\mu}_x}^{-1}(\sigma(x,\cdot))({\rm Exp}_x^{\tilde{\bX}}(v)).
\end{equation}

With this in mind, Equation~\eqref{eq:localquantization} pulled back on $V\subset \Heis^d$ by $\gamma$ has the following expression: for any $f\in C^\infty(V)$,
\begin{equation*}
    \begin{aligned}
    \forall x\in V,\quad\left(\gamma^{-1}\circ\Op_{\hbar}^{\bX,\chi}(\sigma)\circ \gamma \right)f(x) &= \int_{\Heis^d} \kappa_{\sigma,x}^{\tilde{\bX},\hbar}(v)(\chi_x f)(v^{-1}*_{\Heis^d}x)c(x)dv\\
    &= \int_{\Hhat^d}{\rm Tr}_{\mathcal{H}_\pi}\left(\sigma^{\tilde{\bX}}(x,\hbar\cdot\pi)\mathcal{F}_{\Heis^d}(\chi_x f)(\pi)\right)d\hat{\mu}_{\Hhat^d}(\pi),
    \end{aligned}
\end{equation*}
where the last line follows from the Plancherel formula and from Equation~\eqref{eq:symDarboux}, and where $\sigma^{\tilde{\bX}}$ is defined by Equation \eqref{eq:symframe}. The term on the right-hand side of the last equation is then the usual semiclassical quantization $\Op_{\Heis^d,\hbar}$ on the Heisenberg group, as defined, for example, in \cite{FKF19}, meaning that one can sum up this discussion by writing
\begin{equation}
\label{eq:equivquantizations}
    \gamma^{-1}\circ\Op_{\hbar}^{\bX,\chi}(\sigma)\circ \gamma = \Op_{\Heis^d,\hbar}(\sigma^{\tilde{\bX}}).
\end{equation}

\subsubsection{Global semiclassical pseudodifferential calculus}
This local quantization allows for the following global definition of a semiclassical pseudodifferential operator.

\begin{definition}
    \label{def:pseudodiffM}
    Let $m\in\R\cup\{-\infty\}$. The space of \emph{semiclassical pseudodifferential operators} $\Psi_{\hbar}^m(M)$ of order $m$ is defined as the space of all $\hbar$-dependent linear operators 
    \begin{equation*}
        P = \{P_\hbar\}_{\hbar>0}: C^\infty(M)\rightarrow C^\infty(M),
    \end{equation*}
    satisfying the following properties:
    \begin{itemize}
        \item[(i)] if $\psi_1,\psi_2\in C^\infty(M)$ are two maps such that $\supp(\psi_1)\cap\supp(\psi_2) = \emptyset$, then the operator $\psi_1 P \psi_2$ is \emph{semiclassically smoothing on the Sobolev scale}, i.e.~it satisfies for all $N\in \N$,
        \begin{equation*}
            \|\psi_1 P_\hbar \psi_2 \|_{H^{-N}(M)\rightarrow H^{N}(M)} = \mathscr{O}(\hbar^\infty),
        \end{equation*}
        where the spaces $H^m(M)$, $m\in \R$, are adapted Sobolev spaces (see \cite[Section 9.5]{FFF24});
        \item[(ii)] if $(\bX,U)$ is a local Heisenberg frame with cutoff $\chi$, and if $\psi\in C_{c}^\infty(U)$, then there exists $\sigma \in S^m(\Ghat M)$ with support in $\Ghat_{|U}M$ such that 
        \begin{equation*}
            \psi P_\hbar(\psi f) = \Op_{\hbar}^{\bX,\chi}(\sigma)f,\quad f\in C^\infty(M).
        \end{equation*}
    \end{itemize}
\end{definition}

If in the second point of this definition, the symbol $\sigma$ appearing is admissible, the corresponding semiclassical pseudodifferential operator $P$ is also said to be \emph{admissible}. This makes sense since the formula of change of frames preserves the class of admissible symbols.

\vspace{0.25cm}

Introducing the notation
\begin{equation*}
    \Psi_\hbar^\infty(M) := \bigcup_{m\in\R\cup\{-\infty\}} \Psi_\hbar^m(M),
\end{equation*}
it results from \cite{FFF24} that the \emph{semiclassical pseudodifferential calculus} $\Psi_\hbar^\infty(M)$ satisfies the following properties:
\begin{enumerate}
    \item[(i)] For each $m\in\R\cup\{-\infty\}$, the spaces $\Psi_\hbar^m(M)$ have a structure of Fréchet spaces coming from the one of the class of symbols $S^m(\Ghat M)$ and from Definition~\ref{def:pseudodiffM}; moreover, the inclusions $\Psi_\hbar^m(M)\subset \Psi_\hbar^{m'}(M)$, for $m\leq m'$, is then continuous;
    \item[(ii)] The inclusions of $\Psi_\hbar^m(M)$ in the spaces of endomorphisms on $C^\infty(M)$ and $\mathcal{D}'(M)$ are both continuous;
    \item[(iii)] If $P_1 \in \Psi_\hbar^{m_1}(M)$ and $P_2 \in \Psi_\hbar^{m_2}(M)$, then $P_1\circ P_2$ is in $\Psi_\hbar^{m_1+m_2}(M)$; similarly, the adjoint $P_1^*$ is in $\Psi_\hbar^{m_1}(M)$;
    \item[(iv)] Operators in $\Psi_\hbar^m(M)$ act naturally on the scale of adapted Sobolev spaces, i.e.~they belong to the spaces of operators $\mathcal{L}(H^s(M),H^{s-m}(M))$ for any $s\in\R$.
\end{enumerate}

\subsubsection{Global quantization procedure}
We define now the notion of \emph{quantization procedure}.

\begin{definition}
    \label{def:atlas}
    A finite collection $\mathcal{A} = (\bX_\alpha,U_\alpha,\chi_\alpha,\psi_\alpha)_{\alpha\in A}$ is called an \emph{atlas of local Heisenberg frames} (or simply an atlas) if it consists in
    \begin{itemize}
        \item[(i)] a collection $(\bX_\alpha,U_\alpha)_{\alpha\in A}$ of local Heisenberg frames which covers $M$;
        \item[(ii)] a family of cutoffs $(\chi_\alpha)_{\alpha\in A}$ corresponding to the previous local frames;
        \item[(iii)] a family of smooth functions $(\psi_\alpha)_{\alpha\in A}$ on $M$ such that $\sum_{\alpha}\psi_{\alpha}^2 = 1$ and $(x,y)\in M\times M\mapsto \psi_\alpha(x)\psi_\alpha(y)$ is supported in $\{\chi_\alpha(x,y)=1\}$.
    \end{itemize}
\end{definition}

\begin{remark}
    The global quantization procedures we are going to define would still make sense for atlases of general local adapted frames. We restrict our attention however to local Heisenberg frames since they are better suited to contact manifolds and since we used such frames to define our symbol classes.
\end{remark}

Once we have given an atlas, we can define a global quantization procedure.

\begin{definition}
    \label{def:globalquantization}
    Let $\mathcal{A} = (\bX_\alpha,U_\alpha,\chi_\alpha,\psi_\alpha)_{\alpha\in A}$ be an atlas. Then, for $\sigma \in S^m(\Ghat M)$ for $m\in\R\cup\{-\infty\}$ and $f\in C^\infty(M)$, we define 
    \begin{equation*}
        \forall x\in M,\quad \Op_{\hbar}^\mathcal{A}(\sigma)f(x) := \sum_{\alpha\in A}\Op_{\hbar}^{\bX_\alpha,\chi_\alpha}(\psi_\alpha \sigma)(\psi_\alpha f)(x).
    \end{equation*}
\end{definition}

Such a procedure, even if it is not canonical, allows for a nice description of the semiclassical pseudodifferential operators.

\begin{proposition}
    \label{prop:globalquantization}
    Let $\mathcal{A}$ be an atlas and $P = \{P_\hbar\}_{\hbar>0}:C^\infty(M)\rightarrow C^\infty(M)$ a linear operator. Then, $P$ is in $\Psi_{\hbar}^m$ for some $m\in\R\cup\{-\infty\}$ if, and only if there exists a symbol $\sigma = \{\sigma_\hbar\}_{\hbar>0}\in S^m(\Ghat M)$ and a remainder $R_\hbar: C^\infty(M)\rightarrow C^\infty(M)$ such that
    \begin{equation*}
        A_\hbar = \Op_{\hbar}^\mathcal{A}(\sigma) + R_\hbar,
    \end{equation*}
    and the operator $R_\hbar$ is semiclassically smoothing on the Sobolev scale.
\end{proposition}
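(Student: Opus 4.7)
The plan is to prove both directions separately: the easy direction $\Leftarrow$ will be a direct verification of Definition~\ref{def:pseudodiffM}, and the hard direction $\Rightarrow$ will use an iterative Borel construction in the semiclassical parameter $\hbar$.

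For the $\Leftarrow$ direction, I will first observe from the integral kernel formula~\eqref{eq:integral_kernel} the factorisation $\Op_\hbar^\mathcal{A}(\sigma) = \sum_{\alpha \in A} M_{\psi_\alpha} \Op_\hbar^{\bX_\alpha,\chi_\alpha}(\sigma) M_{\psi_\alpha}$, where $M_\psi$ denotes multiplication by $\psi$. Property (i) of Definition~\ref{def:pseudodiffM} then follows from the diagonal concentration of each rescaled kernel $\hbar^{-Q}\kappa_{\sigma,x}^{\bX_\alpha,\hbar}$, which gives $\mathcal{O}(\hbar^\infty)$ bounds in the Sobolev operator norm when localised by two smooth functions of disjoint supports. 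Property (ii) is obtained by applying the change-of-frame invariance of the local calculus to each summand: for $\psi \in C_c^\infty(U)$ and a local Heisenberg frame $(\bX,U)$, one expresses $M_\psi \Op_\hbar^\mathcal{A}(\sigma) M_\psi$ as a single local quantization $\Op_\hbar^{\bX,\chi}(\tilde\sigma)$ for some $\tilde\sigma\in S^m(\Ghat M)$ supported over $U$. The smoothing term $R_\hbar$ is in $\Psi_\hbar^{-\infty}(M) \subset \Psi_\hbar^m(M)$ by definition.

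For the $\Rightarrow$ direction, I will construct by induction on $N$ a sequence of symbols $\sigma^{(k)} \in S^{m-k}(\Ghat M)$ such that $T^{(N)}_\hbar := P_\hbar - \Op_\hbar^\mathcal{A}\bigl(\sum_{k=0}^{N-1} \hbar^k \sigma^{(k)}\bigr) \in \hbar^N \Psi_\hbar^{m-N}(M)$, with $N=0$ trivial. For the inductive step, pick auxiliary cutoffs $\phi_\alpha \in C_c^\infty(U_\alpha)$ equal to $1$ on $\supp\psi_\alpha$ and apply property (ii) of Definition~\ref{def:pseudodiffM} to $\hbar^{-N}T^{(N)}_\hbar$ to obtain, for each $\alpha$, a symbol $\tau_\alpha \in S^{m-N}(\Ghat M)$ supported over $U_\alpha$ with $M_{\phi_\alpha}(\hbar^{-N}T^{(N)}_\hbar) M_{\phi_\alpha} = \Op_\hbar^{\bX_\alpha,\chi_\alpha}(\tau_\alpha)$, and set $\sigma^{(N)} := \sum_\alpha \tau_\alpha$. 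The expansion $\Op_\hbar^\mathcal{A}(\sigma^{(N)}) = \sum_{\alpha,\beta} M_{\psi_\alpha} \Op_\hbar^{\bX_\alpha,\chi_\alpha}(\tau_\beta) M_{\psi_\alpha}$ splits into a diagonal and an off-diagonal part. The diagonal ($\alpha=\beta$) reduces, thanks to $\phi_\alpha\psi_\alpha = \psi_\alpha$, to $\sum_\alpha M_{\psi_\alpha}(\hbar^{-N}T^{(N)}_\hbar) M_{\psi_\alpha}$, which via $\sum_\alpha \psi_\alpha^2 = 1$ and the identity $\sum_\alpha \psi_\alpha A \psi_\alpha = A + \sum_\alpha [\psi_\alpha,A]\psi_\alpha$ equals $\hbar^{-N}T^{(N)}_\hbar$ modulo $\hbar\Psi_\hbar^{m-N-1}(M)$ using the commutator estimate $[\psi_\alpha,\cdot] : \Psi_\hbar^s(M) \to \hbar\Psi_\hbar^{s-1}(M)$ built into the symbolic calculus. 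The off-diagonal ($\alpha\neq\beta$) terms vanish unless $\supp\psi_\alpha \cap \supp\phi_\beta \neq \emptyset$, and on such overlaps the change-of-frame invariance between $\bX_\alpha$ and $\bX_\beta$ rewrites $\Op_\hbar^{\bX_\alpha,\chi_\alpha}(\tau_\beta) = \Op_\hbar^{\bX_\beta,\chi_\beta}(\tau_\beta) + \hbar\Psi_\hbar^{m-N-1}(M)$, after which the partition-of-unity structure forces the total off-diagonal contribution to remain of order $\hbar\Psi_\hbar^{m-N-1}(M)$. Altogether $T^{(N+1)}_\hbar = T^{(N)}_\hbar - \hbar^N \Op_\hbar^\mathcal{A}(\sigma^{(N)}) \in \hbar^{N+1}\Psi_\hbar^{m-N-1}(M)$, closing the induction.

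A standard Borel summation in the Fréchet scale of symbol classes then produces an admissible $\sigma \in S^m(\Ghat M)$ with $\sigma \sim \sum_{k\geq 0}\hbar^k \sigma^{(k)}$, and consequently $P_\hbar - \Op_\hbar^\mathcal{A}(\sigma) \in \bigcap_N \hbar^N \Psi_\hbar^{m-N}(M) \subset \Psi_\hbar^{-\infty}(M)$, meaning the remainder is semiclassically smoothing on the Sobolev scale. The main obstacle will be the handling of the off-diagonal terms at each induction step, which hinges on the change-of-frame invariance between local Heisenberg frames together with sharp commutator estimates: these properties are structural features of the semiclassical pseudodifferential calculus of~\cite{BFF25} underlying Definition~\ref{def:pseudodiffM}, and the bookkeeping must be carried out to ensure the gain of one power of $\hbar$ at each step so that the Borel summation closes.
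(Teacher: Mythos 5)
The paper itself does not prove this proposition: it is stated as a consequence of the general theory of \cite{FFF24} and \cite{BFF25}, so there is no ``paper proof'' to compare against. Your $\Leftarrow$ direction is essentially sound. However, your $\Rightarrow$ direction contains a genuine gap in the treatment of the off-diagonal terms.

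With your choice $\sigma^{(N)} := \sum_\alpha \tau_\alpha$, where $M_{\phi_\alpha}(\hbar^{-N}T^{(N)}_\hbar)M_{\phi_\alpha} = \Op_\hbar^{\bX_\alpha,\chi_\alpha}(\tau_\alpha)$, the total contribution (diagonal plus off-diagonal, after applying the change-of-frame formula to convert $\Op_\hbar^{\bX_\alpha,\chi_\alpha}(\tau_\beta)$ to $\Op_\hbar^{\bX_\beta,\chi_\beta}(\tau_\beta)$ modulo $\hbar\Psi_\hbar^{m-N-1}$) is
\begin{equation*}
\Op_\hbar^{\mathcal{A}}(\sigma^{(N)}) \;=\; \sum_{\alpha,\beta} M_{\psi_\alpha\phi_\beta}\,(\hbar^{-N}T^{(N)}_\hbar)\,M_{\phi_\beta\psi_\alpha} \;+\; \hbar\,\Psi_\hbar^{m-N-1}(M) \;=\; c\,(\hbar^{-N}T^{(N)}_\hbar) \;+\; \hbar\,\Psi_\hbar^{m-N-1}(M),
\end{equation*}
where $c := \sum_\beta \phi_\beta^2$. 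Since the $\phi_\beta$ are chosen equal to $1$ on $\supp\psi_\beta$ but do not form a (squared) partition of unity, $c > 1$ wherever the supports overlap, so the off-diagonal contribution is $(c-1)(\hbar^{-N}T^{(N)}_\hbar) + O(\hbar)$, which is of order $\hbar^N$, not $\hbar^{N+1}$. The assertion that ``the partition-of-unity structure forces the total off-diagonal contribution to remain of order $\hbar\Psi_\hbar^{m-N-1}$'' is precisely the step that fails: nothing in the construction forces the $\phi_\beta$-contributions to collapse.

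The fix is to weight the glued symbol by the squared partition of unity, for instance $\sigma^{(N)} := \sum_\alpha \psi_\alpha^2\,\tau_\alpha$. Repeating the computation then yields $\sum_{\gamma,\alpha} M_{\psi_\gamma\psi_\alpha^2}\,(\hbar^{-N}T^{(N)}_\hbar)\,M_{\psi_\gamma} + \hbar\Psi_\hbar^{m-N-1}(M)$, and since $\sum_\alpha\psi_\alpha^2 = 1$ this collapses to $\hbar^{-N}T^{(N)}_\hbar + \hbar\Psi_\hbar^{m-N-1}(M)$, closing the induction. Alternatively, one can apply property (ii) directly with $\psi = \psi_\alpha$ rather than $\phi_\alpha$ and then glue the resulting symbols by $\sum_\alpha\psi_\alpha^2\tilde\tau_\alpha$ where $\tilde\tau_\alpha$ comes from the enlarged cutoff; either way, the essential point your proof misses is that the ``gluing'' of local symbols must respect the partition of unity $\sum\psi_\alpha^2 = 1$ to avoid an $O(1)$ overcount.
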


In the following sections, we suppose to have given such a global quantization procedure, simply denoted by $\Op_\hbar$.

\vspace{0.25cm}

The existence of global quantization procedures leads to the following definition of \emph{principal symbol} associated with a semiclassical pseudodifferential operator.

\begin{proposition}
    \label{prop:principalsym}
    Let $m\in\R\cup\{-\infty\}$. There exists a unique surjective map
    \begin{equation*}
        \princ:\Psi_\hbar^m(M)\rightarrow S^m(\Ghat M)/\hbar S^{m-1}(\Ghat M),
    \end{equation*}
    such that, for any quantization procedure $\Op_\hbar$ as defined by Definition~\ref{def:globalquantization}, and for any $\sigma\in S^m(\Ghat M)$, we have
    \begin{equation*}
        \princ\left(\Op_\hbar(\sigma)\right) = [\sigma] \in S^m(\Ghat M)/\hbar S^{m-1}(\Ghat M).
    \end{equation*}
    Moreover, we have the following symbolic properties: for any two semiclassical pseudodifferential operators $P_1 \in \Psi_\hbar^{m_1}(M)$ and $P_2 \in \Psi_\hbar^{m_2}(M)$, we have
    \begin{equation*}
        \princ(P_1\circ P_2) = \princ(P_1)\circ \princ(P_2)\quad\mbox{and}\quad\princ(P_1^*) = \princ(P_1)^*.
    \end{equation*}
\end{proposition}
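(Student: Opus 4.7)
The plan is to fix a single global quantization procedure $\Op_\hbar$ coming from some atlas $\mathcal{A}$, use Proposition~\ref{prop:globalquantization} to lift any $P \in \Psi_\hbar^m(M)$ to a symbol $\sigma \in S^m(\Ghat M)$ with $P = \Op_\hbar(\sigma) + R_\hbar$ where $R_\hbar$ is semiclassically smoothing on the Sobolev scale, and set
\begin{equation*}
\princ(P) := [\sigma] \in S^m(\Ghat M)/\hbar S^{m-1}(\Ghat M).
\end{equation*}
The remaining work is to show this assignment is well-defined modulo $\hbar S^{m-1}$, independent of the chosen atlas, surjective, and compatible with composition and adjoints.

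For well-definedness at a fixed atlas: if $\sigma_1, \sigma_2 \in S^m(\Ghat M)$ both represent $P$ up to a smoothing remainder, then $\Op_\hbar(\sigma_1-\sigma_2)$ is semiclassically smoothing, i.e.\ $\mathscr{O}(\hbar^\infty)$ in every operator norm $H^s(M) \to H^{s+N}(M)$. By the characterization of $S^m$-seminorms through the operator action on adapted Sobolev spaces (item (iv) of the calculus properties), every $S^m$-seminorm of $\sigma_1-\sigma_2$ is then $\mathscr{O}(\hbar^\infty)$, so $\sigma_1-\sigma_2 \in \hbar^N S^{m-N}(\Ghat M)$ for every $N$; in particular $[\sigma_1]=[\sigma_2]$ in $S^m/\hbar S^{m-1}$.

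Step 2, independence of atlas, is the main obstacle. Let $\mathcal{A}'$ be another atlas; we must show that for every $\sigma \in S^m(\Ghat M)$,
\begin{equation*}
\Op_\hbar^\mathcal{A}(\sigma) - \Op_\hbar^{\mathcal{A}'}(\sigma) \in \hbar \Psi_\hbar^{m-1}(M).
\end{equation*}
Passing to a common refinement of the two partitions of unity, this reduces to comparing two local quantizations $\Op_\hbar^{\bX,\chi}$ and $\Op_\hbar^{\bX',\chi'}$ on a common small open subset, where $(\bX,U)$ and $(\bX',U')$ are local Heisenberg frames. Since the cutoffs both equal $1$ in a neighborhood of the diagonal, their difference produces operators whose integral kernels~\eqref{eq:integral_kernel} vanish near the diagonal, hence contribute a semiclassically smoothing operator. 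The serious step is the change of frame: writing the integral kernel of $\Op_\hbar^{\bX',\chi}(\sigma)$ in terms of $\ln^\bX_x y$ requires comparing $\ln^\bX_x y$ and $\ln^{\bX'}_x y$ in $\fh^d$. Applying Theorem~\ref{thm:composgeomexp} to the composite $\exp^{\bX'} \circ \ln^\bX$ yields a smooth map $r(x;\cdot)$ that is higher order for the Heisenberg stratification, valued in $\fh^d$. Taylor-expanding $\kappa_{\sigma,x}^{\bX'}(-\hbar^{-1}\ln^{\bX'}_x y)$ as a function of $-\hbar^{-1}\ln^\bX_x y$, after the $\delta_\hbar$-rescaling and using the defining higher-order property (Definition~\ref{def:higherorder}, characterization (iii)), one sees that each additional term in the expansion gains at least one factor of $\hbar$ and drops the order of homogeneity by at least one unit. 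This produces an admissible expansion whose leading term is $\sigma$ and whose first correction lies in $\hbar S^{m-1}(\Ghat M)$, which is precisely the desired conclusion. A parallel argument handles the frame-dependence of the smooth Haar system and of $\jac_y(\ln^\bX_x)$, both of which differ from their $\bX'$-counterparts by smooth multiplicative factors equal to $1$ at $y=x$.

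Surjectivity is immediate: for any $\sigma \in S^m(\Ghat M)$ the operator $P := \Op_\hbar(\sigma)$ lies in $\Psi_\hbar^m(M)$ and satisfies $\princ(P) = [\sigma]$. The symbolic properties then follow from the local picture: writing $P_i = \Op_\hbar(\sigma_i) \mod \text{smoothing}$ and working in a local Heisenberg frame through the identification~\eqref{eq:equivquantizations} with the semiclassical Kohn-Nirenberg calculus on $\Heis^d$, the standard composition formula on the Heisenberg group shows that $\Op_\hbar(\sigma_1) \Op_\hbar(\sigma_2) = \Op_\hbar(\sigma_1 \sharp_\hbar \sigma_2) \mod \text{smoothing}$, with $\sigma_1 \sharp_\hbar \sigma_2 = \sigma_1 \sigma_2 \mod \hbar S^{m_1+m_2-1}$ at each fiber (using that the full expansion is in integer powers of $\hbar$). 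Hence $\princ(P_1\circ P_2) = \princ(P_1)\circ \princ(P_2)$, and a similar fiberwise computation on $\Heis^d$ gives the adjoint identity.
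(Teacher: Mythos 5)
The paper gives no explicit proof of this proposition (it is cited to the forthcoming [BFF25]), so your attempt is a genuine reconstruction. Your global strategy — fix one atlas, decompose via Proposition~\ref{prop:globalquantization}, show independence of the atlas through higher-order expansions, then deduce surjectivity and the symbolic rules — is the right one, and the change-of-atlas step and the composition/adjoint step are handled in the correct spirit. Two points, however, need attention.

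The serious gap is in your first well-definedness paragraph. You assert that because $\Op_\hbar^{\mathcal A}(\sigma_1-\sigma_2)$ has $\mathscr O(\hbar^\infty)$ norms on the Sobolev scale, ``by the characterization of $S^m$-seminorms through the operator action on adapted Sobolev spaces (item (iv)),'' the seminorms of $\sigma_1-\sigma_2$ are themselves $\mathscr O(\hbar^\infty)$. But property (iv) listed after Definition~\ref{def:pseudodiffM} is a \emph{forward} continuity estimate: symbol seminorms control operator norms, not the other way around. What you need here is the converse implication — a Beals-type characterization, or an explicit argument recovering the symbol from the Schwartz kernel modulo $\hbar^\infty$ (using that the kernel~\eqref{eq:integral_kernel} localizes $\kappa_{\sigma,x}^{\bX}(-\hbar^{-1}\ln_x^\bX y)$ near the diagonal, pulling back through Equation~\eqref{eq:symframe} and the Plancherel system of Section~\ref{subsubsect:charmanifold}, and invoking compactness of $M$). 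Without such an inverse estimate, the well-definedness of $\princ$ already at a fixed atlas is not established. This is exactly the kind of result the paper delegates to [BFF25], and it cannot be replaced by a reference to a one-way Sobolev bound.

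A lesser point: in the change-of-atlas paragraph you invoke Theorem~\ref{thm:composgeomexp} for the comparison $\exp^{\bX'}\circ\ln^{\bX}$, but that theorem concerns the composition $\exp^\bX\circ\exp^\bX$ within a \emph{single} Heisenberg frame. For a genuine change of frame one needs the companion statement that, for two overlapping local Heisenberg frames $(\bX,U)$ and $(\bX',U')$, the map $\ln^{\bX'}_x\circ\exp^{\bX}_x$ equals the fiberwise transition $A(x)\in U(d)\times 1$ of~\eqref{eq:transitionmaps} up to a higher-order remainder. Your idea — Taylor-expand the rescaled kernel, use the higher-order property from Definition~\ref{def:higherorder}\,(iii), observe that each correction gains a factor of $\hbar$ and drops the homogeneity order — is the correct mechanism, but you should formulate and justify the appropriate change-of-frame lemma rather than appealing directly to Theorem~\ref{thm:composgeomexp}.
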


For differential operators, the notion of principal symbols coincides with the one of principal part, as defined in \cite[Section 3.4]{FFF24}, and is then easily computable given a local adapted frame $(\bX,U)$.

\begin{remark}
    If the semiclassical pseudodifferential operator $P = \{P_\hbar\}_{\hbar>0} \in \Psi_\hbar^m(M)$ is admissible, its principal symbol $\princ (P)$ can then be identified with a true $\hbar$-independent symbol in $S^m(\Ghat M)$, i.e.~not only as an equivalence class.
\end{remark}

\subsubsection{Semiclassical functional calculus}

We now investigate the functional calculus of the subLaplacian $-\hbar^2 \Delta_{sR}$ as part of the semiclassical pseudodifferential calculus $\Psi_\hbar^\infty(M)$. 

\begin{theorem}
    \label{thm:functionalcalc}
    Let $f\in C_c^\infty(\R)$. For any $N\in \N$, there exist $Q_{f,N}(\hbar)\in \Psi_\hbar^{-\infty}(M)$ and $R_{f,N}(\hbar)$ a bounded operator on $L^2(M)$ such that
    \begin{equation*}
        f(-\hbar^2\Delta_{sR}) = Q_{f,N}(\hbar) + R_{f,N}(\hbar),\quad\mbox{with}\ R_{f,N}(\hbar) = \O_{L^2(M)\rightarrow L^2(M)}(\hbar^{N+1}).
    \end{equation*}
    Moreover, the principal symbol of $Q_{f,N}(\hbar)$ is given by
    \begin{equation*}
        \princ(Q_{f,N}(\hbar)) = f(H),
    \end{equation*}
    where $H$ denotes the principal symbol $\princ(-\hbar^2\Delta_{sR})$ of the subLaplacian.
\end{theorem}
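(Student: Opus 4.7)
The natural strategy is via the Helffer--Sjöstrand formula combined with a parametrix construction for the semiclassical resolvent inside the calculus $\Psi_\hbar^\infty(M)$.

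First, fix an almost-analytic extension $\tilde f \in C_c^\infty(\C)$ of $f$ satisfying $\overline\partial \tilde f(z) = \O(|\mathrm{Im}\, z|^K)$ for every $K\in\N$, and write
\begin{equation*}
    f(-\hbar^2\Delta_{sR}) = -\frac{1}{\pi}\int_{\C} \overline\partial \tilde f(z)\,(-\hbar^2\Delta_{sR}-z)^{-1}\, dL(z),
\end{equation*}
where $dL$ is Lebesgue measure on $\C$. The problem reduces to approximating the resolvent $(-\hbar^2\Delta_{sR}-z)^{-1}$, for $z$ off $\R_+$, by an operator in $\Psi_\hbar^{-\infty}(M)$, with explicit control of the dependence on $\mathrm{Im}\, z$.

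Second, the parametrix is built symbolically. The principal symbol $H\in S^2(\Ghat M)$ is, in any $\phi$-frame trivialization, a non-negative field of harmonic oscillators on each $\cH_\pi$, so for $z\in\C\setminus\R_+$ the fiberwise inverse $q_0(z) := (H-z)^{-1}$ is a bounded operator depending smoothly on $(x,\pi)$. A direct inspection of the difference operators $\Delta^\alpha$ applied to $q_0(z)$, combined with the harmonic oscillator structure of $H$, shows that $q_0(z)\in S^{-2}(\Ghat M)$ with seminorms growing at most polynomially in $|\mathrm{Im}\, z|^{-1}$. Iteratively, one produces symbols $q_j(z)\in S^{-2-j}(\Ghat M)$, $j\geq 1$, by solving order by order the equations imposed by the asymptotic expansion of $\Op_\hbar(\,\cdot\,)\circ(-\hbar^2\Delta_{sR}-z)$ in the symbolic calculus, so that
\begin{equation*}
    \Op_\hbar\Big(\sum_{j=0}^{N}\hbar^j q_j(z)\Big)\circ(-\hbar^2\Delta_{sR}-z) = I + \hbar^{N+1}\mathcal{R}_N(z,\hbar),
\end{equation*}
with $\|\mathcal{R}_N(z,\hbar)\|_{L^2\to L^2} = \O(|\mathrm{Im}\, z|^{-M_N})$ for some integer $M_N$.

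Third, substituting this parametrix back into the Helffer--Sjöstrand formula and integrating against $\overline\partial\tilde f$, whose infinite order of vanishing on $\R$ absorbs the negative powers of $|\mathrm{Im}\, z|$, yields
\begin{equation*}
    f(-\hbar^2\Delta_{sR}) = \Op_\hbar\Big(\sum_{j=0}^{N}\hbar^j\tilde q_j\Big) + R_{f,N}(\hbar), \quad \tilde q_j := -\frac{1}{\pi}\int_{\C}\overline\partial\tilde f(z)\,q_j(z)\,dL(z),
\end{equation*}
with $R_{f,N}(\hbar) = \O_{L^2\to L^2}(\hbar^{N+1})$. The scalar Helffer--Sjöstrand formula applied fiberwise to the harmonic oscillator $H$ identifies $\tilde q_0$ with $f(H)$, yielding the stated principal symbol. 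Since $\mathrm{supp}\, f$ is compact and $H$ has discrete spectrum going to $+\infty$ in each fiber of $\Ghat M$, $f(H)$ is Schwartz in $H$ and lies in $S^{-\infty}(\Ghat M)$; the same conclusion for the correction symbols $\tilde q_j$ places $Q_{f,N}(\hbar)$ in $\Psi_\hbar^{-\infty}(M)$.

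The main obstacle is the resolvent symbol estimates: one must verify that $q_0(z)$ and the iterative corrections $q_j(z)$ belong to $S^{-2-j}(\Ghat M)$ with seminorms blowing up only polynomially in $|\mathrm{Im}\, z|^{-1}$. This rests crucially on the harmonic oscillator structure of $H$ in a $\phi$-frame and on the stability of the classes $S^m(\Ghat M)$ under difference operators acting on resolvents of $H$. Once this is in hand, the remainder of the argument is a routine adaptation of the Dimassi--Sjöstrand functional calculus to the filtered pseudodifferential calculus of \cite{BFF25}.
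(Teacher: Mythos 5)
Your proposal takes a genuinely different route from the paper. You build a global symbolic parametrix for the resolvent directly in $\Psi_\hbar^{-\infty}(M)$, starting from the fiberwise inverse $q_0(z)=(H-z)^{-1}$ and iterating via the composition calculus, then feed the parametrix into Helffer--Sj\"ostrand. The paper instead \emph{localizes first}: it fixes an atlas of Darboux frames, pulls the subLaplacian back to open sets of $\Heis^d$, extends each pullback by the left-invariant Heisenberg subLaplacian to a globally defined operator $T_\alpha(\hbar)$ on $\Heis^d$, and invokes the semiclassical functional calculus of Fischer--Mikkelsen \cite[Theorem~6.7]{FM} to conclude directly that $f(T_\alpha(\hbar))$ is a semiclassical pseudodifferential operator on $\Heis^d$ with principal symbol $f(\tau_\alpha)$. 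The only thing left to estimate is the patching error, which the paper handles by showing (Lemma~\ref{lem:easyestimate}) that $\psi_1\circ(T_\alpha(\hbar)-z)^{-1}\circ\psi_2 = \O(\hbar^N|\mathrm{Im}\,z|^{-N-1})$ whenever $\psi_1,\psi_2$ have disjoint supports, which combined with $\overline\partial\tilde f=\O(|\mathrm{Im}\,z|^\infty)$ gives remainders of size $\O(\hbar^\infty)$. In other words, the paper offloads all the resolvent symbol estimates and the Beals-type argument to the model group $\Heis^d$, where they are already established, and only proves a soft commutator-localization bound.

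Be aware that what you flag as ``the main obstacle'' is in fact where essentially all the work in your approach would sit, and it is not proven in your write-up. You would need: (a)~that $(H-z)^{-1}\in S^{-2}(\Ghat M)$ with every seminorm bounded by a polynomial in $|\mathrm{Im}\,z|^{-1}$ uniformly over bounded $z$ --- this requires commuting the Sobolev weights $(1-\pi(\Delta_{\Heis^d}))^{s/2}$ and the difference operators $\Delta^\alpha$ through the resolvent, each commutation producing a factor $(H-z)^{-1}$ and thus a factor $|\mathrm{Im}\,z|^{-1}$, but verifying membership in $S^{-2}$ for every $\alpha$ is genuinely more delicate than in the scalar calculus because the difference operators are non-local on the group side; and (b)~that the semiclassical composition expansion in $\Psi_\hbar^\infty(M)$ (which, in the filtered calculus of \cite{FFF24,BFF25}, carries remainder terms coming from the higher-order corrections $r(x;w,v)$ of Theorem~\ref{thm:composgeomexp}) can be run with $z$-uniform constants. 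Neither is a ``routine adaptation''; they are precisely the content of the result in \cite{FM} that the paper imports. So your strategy is sound and is arguably more intrinsic, but as written it asserts the one lemma whose proof carries the weight of the theorem; the paper's reduction to Darboux charts plus \cite{FM} is the way it avoids having to prove that lemma on $\Ghat M$.
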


The proof of this theorem is postponed to Appendix~\ref{sect:functionalcalc}.

\subsubsection{Generic operator wave front set and microlocality}
\label{subsubsect:wavefrontset}

In this section, we define the notion of microlocal equality on the generic dual tangent bundle $\Ghat_\infty M$.

\begin{definition}
    \label{def:esssup}
    Let $\sigma = \{\sigma_\hbar\}_{\hbar>0}$ a semiclassical symbol in $S^{-\infty}(\Ghat M)$. We say that $\sigma$ has a \emph{generic essential support} if there exists a compact set $K\subset \Ghat_\infty M$ such that
    \begin{equation*}
        {\rm supp}(\chi)\cap K = \emptyset\quad\mbox{implies}\quad\chi\sigma_\hbar\in\hbar^\infty S^{-\infty}(\Ghat M),
    \end{equation*}
    for any $\chi\in C_c^\infty(\Ghat_\infty M)$. The smallest such compact set $K$ is called the generic essential support and is denoted by
    \begin{equation*}
        {\rm ess\,supp}_\infty(\sigma)\subset \Ghat_\infty M.
    \end{equation*}
\end{definition}

Let $P=\{P_\hbar\}_{\hbar>0}\in \Psi_\hbar^{-\infty}(M)$ a semiclassical pseudodifferential operator. We say that $P$ admits a \emph{generic operator wave front set} when, following Proposition~\ref{prop:globalquantization}, this operator can be written
\begin{equation*}
    P_\hbar = \Op_\hbar(\sigma_\hbar) + R_\hbar,
\end{equation*}
with $\sigma = \{\sigma_\hbar\}_{\hbar>0}\in S^{-\infty}(\Ghat M)$ having generic essential support. We then introduce the notation
\begin{equation}
    \label{eq:opwavefrontset}
    {\rm WF}'_\infty(P) := {\rm ess\,supp}(\sigma).
\end{equation}
Observe that this definition does not depend on the choice of global quantization $\Op_\hbar$ by the formula of change of frames given by \cite[Proposition 2.6]{BFF25} and \cite[Proposition A.5]{BFF25}, since difference operators are local operators on $\Ghat_\infty M\sim \Sigma$.

Observe moreover that the properties of the symbolic calculus given by \cite[Proposition 2.8-2.9]{BFF25} have the following consequences: if $P_1, P_2\in \Psi_\hbar^{-\infty}(M)$ have both generic operator wave front set, then we have
\begin{equation*}
    {\rm WF}'_\infty(P_1+P_2)\subset{\rm WF}'_\infty(P_1)\cup {\rm WF}'_\infty(P_2),\quad {\rm WF}'_\infty(P_1\circ P_2) \subset {\rm WF}'_\infty(P_1)\cap {\rm WF}'_\infty(P_2),
\end{equation*}
and 
\begin{equation*}
    {\rm WF}'_\infty(P_1^*) = {\rm WF}'_\infty(P_1).
\end{equation*}
This notion leads to the following definition of microlocality on $\Ghat_\infty M$.

\begin{definition}
    \label{def:microlocality}
    Let $K\subset \Ghat_\infty M$ a closed subset. For any two semiclassical pseudodifferential operators $P_1$ and $P_2$ in $\Psi_\hbar^{-\infty}(M)$, we say that these operators are microlocally equal on $K$ if we have
    \begin{equation*}
        K\cap {\rm WF}'_\infty(P_1-P_2) = \emptyset.
    \end{equation*}
\end{definition}

\begin{remark}
    From these definitions, one would also be able to define a generic wave front set for distributions depending on $\hbar$. However, we will have no use for this notion and we will not develop on it.
\end{remark}

\subsection{Landau projectors}
\label{subsect:Landauproj}

In this section, we suppose given $(M^{2d+1},\eta,g)$ a compact contact metric structure as in Section~\ref{subsect:sublaplacian} and we consider the associated semiclassical subLaplacian $-\hbar^2\Delta_{\rm sR}$, as defined by Definition~\ref{def:subLaplacian}.

\subsubsection{Principal symbol of the subLaplacian}

Recall that, given a local $\phi$-frame 
$$\bX = (X_1,\dots,X_d,Y_1,\dots,Y_d,R)$$ 
on an open subset $U$, the semiclassical subLaplacian writes 
\begin{equation*}
    -\hbar^2\Delta_{\rm sR} = \sum_{i=1}^d \left(\hbar^2 X_{i}^* X_i + \hbar^2 Y_{i}^*Y_i\right),
\end{equation*}
where the adjoint is taken with respect to the volume form $\nu$. Since the vector fields $(\hbar X_i,\hbar Y_i)_{i=1,\dots,d}$ are tangent to the contact distribution $\cD$, they corresponds to differential operators in $\Psi_\hbar^1(M)$. We deduce that the semiclassical subLaplacian is an admissible differential operator contained in $\Psi_\hbar^2(M)$ whose principal symbol is given over $\Ghat M_{|U}$ by the composition rule: 
\begin{equation*}
    \princ^2\left(-\hbar^2 \Delta_{\rm sR}\right)(x,\pi) = -\sum_{i=1}^d \left(\pi\left(\langle X_i\rangle_x\right)^2+ \pi\left(\langle Y_i\rangle_x\right)^2\right).
\end{equation*}
Recall that we simply denote this principal symbol by $H$. In the trivialization $\hat{\varphi}_U^\bX$ of the generic dual tangent bundle $\Ghat_\infty M$, the symbol $H$ is explicit: for $(x,[\pi_x^\lambda])\in \Ghat_\infty M$, we have
\begin{equation}
    \label{eq:symH}
    H(x,[\pi_x^\lambda]) = |\lambda| \sum_{i=1}^d \left(-\partial_{\xi_i}^2 + \xi_i^2\right).
\end{equation}

In order to lighten the notations, from now on we will simply denote by $(x,\lambda)$ the point $(x,[\pi_x^\lambda])$ of the generic dual tangent bundle given by the trivialization $\hat{\varphi}_U^\bX$ and write $H(\lambda)$ for the principal symbol of the subLaplacian seeing that it does not depend on the variable $x$ in this trivialization.

Observe now that this expression of $H$ is simply a sum of independent harmonic oscillators in the variable $\xi \in \langle \mathfrak{p}\rangle_x$, which we identify with $\R^d$. 

\bigskip

We now recall the spectral properties of the harmonic oscillator. The eigenfunctions of $H(\lambda)$ can be expressed in terms of the basis of Hermite functions $(e_n)_{n\in\N}$, normalized
in $L^2(\R)$ and satisfying for all $\xi\in\R$ the equation
\begin{equation*}
    -e_{n}''(\xi) + \xi^2 e_n(\xi)  = (2n+1)e_n(\xi).
\end{equation*}
Indeed, for each multi-index $\alpha\in\N^d$, the function $h_\alpha$ defined by
\begin{equation*}
    e_\alpha(\xi) := \prod_{i=1}^d e_{\alpha_i}(\xi_i),\quad \xi = (\xi_1,\dots,\xi_d)\in\R^d,
\end{equation*}
is an eigenfunction of $H(\lambda)$ for the eigenvalue $|\lambda|(2|\alpha|+d)$, that is,
\begin{equation*}
    H(\lambda)e_\alpha = |\lambda|(2|\alpha|+d)e_\alpha,\quad \alpha\in\N^d.
\end{equation*}
As the family $(e_\alpha)_{\alpha\in\N^d}$ is a Hilbert basis in $L^2(\R^d)$, these eigenvalues completely describe the spectrum of $H(\lambda)$. Thus, the entire spectrum of $H(\lambda)$ is given by the set $\{|\lambda|(2n+d)\}_{n\in\N}$ and from the description of the eigenfunctions $(e_\alpha)_{\alpha\in\N^d}$, the multiplicity of the $n$-th eigenvalue is given by
\begin{equation}
    \label{eq:multiplicity}
    {\rm mult}(|\lambda|(2n+d)) = \binom{n+d-1}{n}.
\end{equation}

\begin{remark}
    \label{rem:oddeven}
    We recall here, as a useful future reminder, that eigenfunctions of the harmonic oscillator are necessarily odd or even, depending on the parity of $n$, the integer parameter of the Landau levels.
\end{remark}

\subsubsection{Homogeneous symbol class of the Landau projectors}

For $n\in \N$, we introduce the notation $\Pi_n$, called the \emph{$n$-th Landau projector}, for the orthogonal projector onto the space of eigenfunctions $e_\alpha$, $\alpha\in\N^d$, satisfying $|\alpha| = n$. This defines a measurable symbol on $\Ghat_U M$ through the trivialization given by $\bX$. Equivalently, $\Pi_n(x,[\pi_x^\lambda])$ can be described as the spectral projector of $H(x,[\pi_x^\lambda])$ associated with the eigenvalue $|\lambda|(2n+d)$ for any $(x,[\pi_x^\lambda]) \in \Ghat_\infty M$. This definition allows us to see that $\Pi_n$ is actually globally defined as a measurable symbol on $\Ghat M$, called the \emph{$n$-th Landau symbol}. 

In reality, the Landau projectors are smooth homogeneous symbols, as defined in \cite[Section 5.5.1]{FFF24}.

\begin{proposition}
    \label{prop:Pinsmoothsym}
    For $n\in\N$, the $n$-th Landau symbol $\Pi_n$ is a homogenous symbol in $\dot{S}^0(\Ghat M)$.
\end{proposition}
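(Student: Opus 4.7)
The plan is to verify the three defining properties of $\dot{S}^0(\Ghat M)$: (i) $\Pi_n$ is globally well-defined and smooth in the base variable $x$; (ii) it satisfies order-$0$ symbol estimates fiberwise; (iii) it is $0$-homogeneous under the natural dilations on $\Ghat M$.

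For (i), I work in a local $\phi$-frame $\bX = (X_1,\dots,X_d,Y_1,\dots,Y_d,R)$ on $U \subset M$ and use the trivialization $\hat\varphi_U^\bX$. By Equation~\eqref{eq:symH}, $H(x,[\pi_x^\lambda]) = |\lambda| H_0$ with $H_0 = \sum_i(-\partial_{\xi_i}^2 + \xi_i^2)$, which does not depend on $x$. Consequently $\Pi_n(x,[\pi_x^\lambda])$ equals the spectral projector $P_n$ of $H_0$ onto the eigenspace of eigenvalue $2n+d$, and is independent of both $x$ and $\lambda$ in this trivialization. Smoothness on the base within any fixed $\phi$-frame is thus trivial. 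To patch across $\phi$-frames, I use that the transition maps between two $\phi$-frames take values in $U(d) \times 1$, cf.~Equation~\eqref{eq:transitionmaps}. The corresponding change of the Lagrangian splitting $\mathfrak{v} = \mathfrak{p}^\bX \oplus \mathfrak{q}^\bX$ is implemented on $L^2(\R^d)$ by a unitary operator (the metaplectic representation) that commutes with the isotropic harmonic oscillator $H_0$. Hence $P_n$, as the spectral projector onto an eigenspace of $H_0$, is preserved, and $\Pi_n$ is unambiguously and smoothly defined on all of $\Ghat M$.

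For (iii), I use that under the dilation $\delta_t$ on $\fh^d$ the identity $\pi_x^\lambda \circ \delta_t = \pi_x^{t^2\lambda}$ holds on $L^2(\R^d)$ up to the natural unitary $\mathcal{D}_t : f(\xi) \mapsto t^{d/2} f(t\xi)$. A direct computation yields $\mathcal{D}_t^{-1} H_0 \mathcal{D}_t = t^2 H_0$, so the eigenspaces of $H_0$ are preserved by $\mathcal{D}_t$ and $\mathcal{D}_t^{-1} P_n \mathcal{D}_t = P_n$. Translating this into the homogeneity of the symbol gives $\Pi_n \circ \delta_t = \Pi_n$, i.e., homogeneity of degree $0$, as required.

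For (ii), I need $\Pi_n(x,\cdot) \in S^0(\Hhat)$ with uniform seminorms. Since $\Pi_n(x,\cdot) = P_n$ is constant in $x$, all uniformity in $x$ is automatic and it suffices to check $P_n \in S^0(\Hhat)$. The direct route is to exhibit the convolution kernel of $P_n$ on $\Heis^d$: it is the well-known Laguerre-type kernel (see, e.g., Folland, or the heat-kernel/spectral-projection formulas for $-\Delta_{\Heis^d}$) which is a Schwartz function on $\Heis^d$. Then the difference operators $\Delta^\alpha$ act by multiplication by the monomial $v^\alpha$ on this kernel, which preserves Schwartz class, so that $\Delta^\alpha P_n \in L^\infty_{0,[\alpha]}(\Hhat)$ for every $\alpha$; this is precisely the definition of the class $S^0(\Hhat)$. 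A more intrinsic alternative is to represent $P_n = \chi_n(H_0)$ with $\chi_n \in C_c^\infty(\R)$ a bump supported near the isolated spectral point $2n+d$, and apply the spectral functional calculus for Rockland operators (e.g.\ Helffer--Sjöstrand adapted to $\Heis^d$) to obtain the Schwartz convolution kernel together with the symbolic estimates in one stroke. The main obstacle is this step (ii): the other two are essentially algebraic, but (ii) requires that the spectral projector onto a single Landau level genuinely yields a Schwartz convolution kernel on the Heisenberg group, which rests on the explicit Laguerre calculus or on a Rockland functional calculus argument.
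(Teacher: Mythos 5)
Your steps (i) and (iii) are essentially right (and indeed close to how the reference~\cite[Proposition~3.6]{FKF21} that the paper invokes handles the group case): the $x$-independence in any $\phi$-frame trivialization, the metaplectic invariance of $P_n$ under changes of $\phi$-frame because $U(d)\subset Sp(2d)$ stabilizes the isotropic oscillator, and the $0$-homogeneity because $\pi^{\lambda}\circ\delta_t=\pi^{t^2\lambda}$ while $P_n$ is $\lambda$-independent. (In the paper's Schrödinger normalization of \eqref{eq:schrodingerrep} the intertwiner $\mathcal{D}_t$ is actually unnecessary: the $\sqrt{|\lambda|}$ factors already absorb the dilation exactly.)

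Step (ii) has a genuine error, and it is a conceptual one. The class $\dot S^0(\Ghat M)$ is a \emph{homogeneous} symbol class (cf. \cite[Section~5.5.1]{FFF24}), not $S^0(\Hhat)$ together with an extra homogeneity condition, and $\Pi_n$ does \emph{not} belong to $S^0(\Hhat)$. Concretely, the convolution kernel of $\Pi_n$ is not Schwartz: since $\widehat\kappa(\pi^\lambda)=P_n$ for all $\lambda\neq 0$, Fourier inversion gives, already on the center,
\begin{equation*}
\kappa(0,0,r)=c\,\binom{n+d-1}{n}\int_{\R^*}e^{i\lambda r}\,|\lambda|^d\,d\lambda,
\end{equation*}
which is a homogeneous distribution of degree $-(d+1)$ in $r$, not a Schwartz function. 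If your claim were true, $\Pi_n$ would lie in $S^{-\infty}(\Ghat M)$; but the paper introduces the cutoff $\rho$ in \eqref{eq:Pinrho} precisely because this fails, and it is $\Pi_n^\rho$ (not $\Pi_n$) that lands in $S^{-\infty}(\Ghat M)$. Your ``more intrinsic alternative'' also breaks down: the Rockland functional calculus for $-\Delta_{\Heis^d}$ produces $\chi_n\bigl(\pi^\lambda(-\Delta_{\Heis^d})\bigr)=\chi_n\bigl(|\lambda|H_0\bigr)$, which depends nontrivially on $\lambda$ and differs from $\chi_n(H_0)=P_n$ except at $|\lambda|=1$. The correct route for (ii), as in \cite[Proposition~3.6]{FKF21}, is to verify the defining estimates of the homogeneous class $\dot S^0$, which are required only on $\Ghat_\infty M$ (i.e.\ away from $\lambda=0$), where the smoothness of $\lambda\mapsto P_n$ is trivial; no decay in $\lambda$ near $0$ or at infinity is needed for membership in the homogeneous class.
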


Consequently, for any cut-off function $\rho\in C_c^\infty(\Ghat_\infty M,[0,1])$, the symbol $\Pi_{n}^\rho$ defined by 
\begin{equation}
    \label{eq:Pinrho}
    \forall (x,[\pi_x^\lambda])\in \Ghat_\infty M,\quad \Pi_{n}^\rho(x,[\pi_x^\lambda]) := \rho(x,\lambda)\Pi_n(x,[\pi_x^\lambda]),
\end{equation}
is in the class $S^{-\infty}(\Ghat M)$, since the function $\rho$ defines a symbol in $S^{-\infty}(\Ghat M)$ by \cite[Theorem 6.5.1]{FR}. 

Since the symbols $H$ and $\Pi_n$ are independent of variable $x\in M$ in any local trivialization induced by a $\phi$-frame, the proof of Proposition~\ref{prop:Pinsmoothsym} follows directly from that of \cite[Proposition 3.6]{FKF21}.

\subsubsection{Construction of the Landau projectors} 

Let $\rho\in C_c^\infty(\Ghat_\infty M,[0,1])$ again denote a compactly supported cutoff function. Since the symbol $\Pi_n^\rho$ defined by Equation~\eqref{eq:Pinrho} is in $S^{-\infty}(\Ghat M)$, one can consider its quantization $\Op_\hbar(\Pi_n^\rho)$. In particular, we are interested in its commutator with the subLaplacian: by the composition rule of semiclassical pseudodifferential operators, the principal symbol
of the operator
\begin{equation*}
    [-\hbar^2 \Delta_{\rm sR},\Op_\hbar(\Pi_n^\rho)],
\end{equation*}
is given by $[H,\rho \Pi_n] = 0$, and thus we have
\begin{equation*}
    [-\hbar^2 \Delta_{\rm sR},\Op_\hbar(\Pi_n^\rho)] \in \hbar \Psi_\hbar^{-\infty}(M).
    \end{equation*} 
Similarly, we can write
\begin{equation*}
    \Op_\hbar(\Pi_n^\rho)\circ \Op_\hbar(\Pi_n^\rho)=\Op_\hbar(\Pi_n^{\rho^2}) + \hbar\Psi_\hbar^{-\infty}(M).
\end{equation*}

The last two equations can be interpreted as follows: the operator $\Op_\hbar(\Pi_n^\rho)$ is a projector microlocally on $\{\rho = 1\}\subset\Ghat_\infty M$, up to a term of order $\O(\hbar)$; moreover, this operator also commutes up to $\O(\hbar)$ with the subLaplacian $-\hbar^2\Delta_{\rm sR}$.

The next result shows that, up to correcting the symbol $\Pi_n^\rho$ with lower order terms, the previous remainder in $\O(\hbar)$ can be improved to any desired order in $\hbar$.

\begin{theorem}
    \label{thm:superadiabproj}
    Let $n\in \N$. There exists a unique admissible self-adjoint semiclassical pseudodifferential operator $\hat{\Pi}_n^{\rho,\hbar}$ in $\Psi_\hbar^{-\infty}(M)$, called the \emph{$n$-th Landau projector}, with principal symbol given by $\Pi_n^\rho$ such that microlocally on $\{\rho=1\}$,
    \begin{equation*}
        [-\hbar^2 \Delta_{\rm sR},\hat{\Pi}_n^{\rho,\hbar}] = 0
    \quad\mbox{and}\quad
        \hat{\Pi}_n^{\rho,\hbar} = (\hat{\Pi}_n^{\rho,\hbar})^* = \hat{\Pi}_n^{\rho,\hbar}\circ \hat{\Pi}_n^{\rho,\hbar}.
    \end{equation*}
\end{theorem}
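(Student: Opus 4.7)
The plan is, following the superadiabatic construction of \cite{BFKRV}, to realize $\hat{\Pi}_n^{\rho,\hbar}$ as the quantization $\Op_\hbar(\pi_{n,\hbar}^\rho)$ of an admissible symbol $\pi_{n,\hbar}^\rho \sim \sum_{k \geq 0} \hbar^k p_k \in S^{-\infty}(\Ghat M)$, with $p_0 = \Pi_n^\rho$, constructed iteratively so that, microlocally on $\{\rho = 1\}$, the three symbolic conditions
\begin{equation*}
    [H, \pi_{n,\hbar}^\rho]_\star \equiv 0,\qquad \pi_{n,\hbar}^\rho \star \pi_{n,\hbar}^\rho \equiv \pi_{n,\hbar}^\rho \pmod{\hbar^\infty},\qquad (\pi_{n,\hbar}^\rho)^* = \pi_{n,\hbar}^\rho,
\end{equation*}
all hold, where $\star$ denotes the symbol product dual to $\Op_\hbar$ and $H = \princ(-\hbar^2\Delta_{\rm sR})$. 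By Proposition~\ref{prop:principalsym} together with the asymptotic expansions of the symbolic calculus, such a formal symbol quantizes to an operator satisfying the two displayed identities of the statement modulo operators in $\hbar^\infty \Psi_\hbar^{-\infty}(M)$. The base case holds since $\Pi_n^\rho$ commutes with $H$ and is idempotent on $\{\rho = 1\}$.

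The central algebraic ingredient is the spectral structure of $H$: on $\Ghat_\infty M$, $H = \sum_m |\lambda|(2m+d)\Pi_m$ has pairwise distinct Landau eigenvalues. Any symbol $\tau$ then decomposes as $\tau = \sum_{m, m'} \Pi_m \tau \Pi_{m'}$, and the derivation $\mathrm{ad}_H := [H, \cdot]$ annihilates the fine-diagonal pieces ($m = m'$) while multiplying each fine-off-diagonal piece ($m \neq m'$) by the nonzero scalar $2\lambda(m - m')$. Consequently $\mathrm{ad}_H$ is bijective on the space of fine-off-diagonal symbols, which is the precise algebraic manifestation of the absence of resonances.

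For the inductive step, assume $\sigma_k := \sum_{j < k} \hbar^j p_j$ is self-adjoint and satisfies the commutator and projector conditions modulo $\O(\hbar^k)$, with residuals $\hbar^k A_k$ and $\hbar^k B_k$ in $S^{-\infty}$. Extracting the $\hbar^k$-coefficient of the exact identities $[\sigma_k, \sigma_k \star \sigma_k - \sigma_k]_\star = 0$ and $[H, \sigma_k \star \sigma_k]_\star = [H, \sigma_k]_\star \star \sigma_k + \sigma_k \star [H, \sigma_k]_\star$ yields the compatibility relations
\begin{equation*}
    [\Pi_n, B_k] = 0\quad\mbox{and}\quad [H, B_k] = \Pi_n A_k \Pi_n - (I - \Pi_n) A_k (I - \Pi_n).
\end{equation*}
The first forces $B_k$ to be block-diagonal with respect to the coarse splitting $\Pi_n \oplus (I - \Pi_n)$; comparing the $(n,n)$-block in the second then forces $\Pi_n A_k \Pi_n = 0$, and comparing the $(I - \Pi_n)\cdot(I - \Pi_n)$-block shows that $(I - \Pi_n) A_k (I - \Pi_n)$ is fine-off-diagonal. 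In particular, $A_k$ is entirely fine-off-diagonal, so $\mathrm{ad}_H^{-1}(A_k)$ is well-defined, and one verifies that
\begin{equation*}
    p_k := -\Pi_n B_k \Pi_n + \sum_{m \neq n} \Pi_m B_k \Pi_m - \mathrm{ad}_H^{-1}(A_k)
\end{equation*}
is self-adjoint (using $A_k^* = -A_k$ and $B_k^* = B_k$ inherited from the inductive hypothesis) and makes $\sigma_{k+1} := \sigma_k + \hbar^k p_k$ satisfy all three conditions modulo $\O(\hbar^{k+1})$.

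A standard Borel summation in the Fréchet space $S^{-\infty}(\Ghat M)$, carried out chart by chart via an atlas as in Definition~\ref{def:atlas}, produces the desired admissible symbol $\pi_{n,\hbar}^\rho$. Uniqueness microlocally on $\{\rho = 1\}$ follows by applying the same iterative argument to the difference of two candidates, each coefficient of which must vanish by the injectivity of the equations solved above. The main obstacle will be the verification of the compatibility relations together with their consequences for the block structure of $A_k$: these are what allow the superadiabatic construction to proceed, and they are precisely the algebraic manifestation of the absence of resonances among Landau levels, the hallmark feature of contact metric structures that distinguishes this construction from what is possible on more general filtered manifolds.
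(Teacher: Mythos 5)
Your proof is correct and implements the same superadiabatic iteration as the paper, with the same compatibility relations at each step. The only structural difference lies in how $\mathrm{ad}_H$ is inverted on the off-diagonal part. You invoke the full fine Landau-level decomposition $\tau = \sum_{m,m'}\Pi_m\tau\Pi_{m'}$ and invert $\mathrm{ad}_H$ block by block (the eigenvalue should be $2|\lambda|(m-m')$ rather than $2\lambda(m-m')$, but this is only a typo), whereas the paper works entirely within the coarse splitting $\Pi_n^\rho\oplus\Pi_n^{\rho,\perp}$: the two diagonal blocks of the new corrector, including all of the fine structure inside $\Pi_n^{\rho,\perp}$, are read off directly from the projector equation as $\mp\Pi_n^\rho R_{k+1,0}\Pi_n^\rho$ and $\Pi_n^{\rho,\perp}R_{k+1,0}\Pi_n^{\rho,\perp}$, and only the coarse off-diagonal piece requires inverting a Sylvester operator, which the paper writes as the contour integral
\begin{equation*}
    X = -\frac{1}{2\pi i}\int_\gamma (H\Pi_n^\rho - z)^{-1}\,Y\,(H\Pi_n^{\rho,\perp}-z)^{-1}\,dz.
\end{equation*}
That closed form makes it visible that $X\in S^{-\infty}(\Ghat M)$; your $\mathrm{ad}_H^{-1}(A_k)$ instead involves an infinite sum over fine Landau blocks, whose convergence in the symbol topology you should say a word about. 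The two constructions agree: using your compatibility relation $\Pi_n^\perp[H,B_k]\Pi_n^\perp = -\Pi_n^\perp A_k\Pi_n^\perp$ one finds $\Pi_m B_k\Pi_{m'} = -\tfrac{1}{2|\lambda|(m-m')}\Pi_m A_k\Pi_{m'}$ for $m,m'\neq n$, so your formula reproduces the paper's diagonal block $\Pi_n^{\rho,\perp}B_k\Pi_n^{\rho,\perp}$ without actually needing $\mathrm{ad}_H^{-1}$ there. Finally, you obtain self-adjointness step by step via $A_k^* = -A_k$ and $B_k^* = B_k$, while the paper derives it a posteriori from uniqueness (the adjoint satisfies the same defining system); both arguments are sound, and yours is arguably the cleaner of the two.
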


\begin{proof}
    The construction of the projector $\hat{\Pi}_n^{\rho,\hbar}$ is performed recursively, following the proof of \cite[Theorem 1.2]{BFKRV}. We are looking for semiclassical pseudodifferential operators $(\hat{\Pi}_n^{\rho,k})_{k\in\N}$ such that 
    \begin{equation*}
        \hat{\Pi}_n^{\rho,0} = \Op_\hbar(\Pi_n^{\rho})\quad \mbox{and}\quad \hat{\Pi}_n^{\rho,k} \in \Psi_\hbar^{-\infty}(M),\ k\geq 1,
    \end{equation*}
    and for which the following holds true:
    \begin{equation}
    \begin{aligned}\label{eq:recursiveeq}
	&\hat{\Pi}_n^{\rho,[k]}\circ \hat{\Pi}_n^{\rho,[k]} -\hat{\Pi}_n^{\rho,[k]} =\hbar^{k+1}R_{k+1}\quad\mbox{microlocally on }\ \{\rho=1\},\\
	&[-\hbar^2 \Delta_{\rm sR},\hat{\Pi}_n^{\rho,[k]}]=\hbar^{k+1}T_{k+1},
    \end{aligned}
    \end{equation}
    for some semiclassical pseudodifferential operators $R_{k+1}$ and $T_{k+1}$ belonging to $\Psi_\hbar^{-\infty}(M)$, and where we introduced the notation
    \begin{equation*}
        \hat{\Pi}_n^{\rho,[k]} = \hat{\Pi}_n^{\rho,0} + \hbar \hat{\Pi}_n^{\rho,1}+\cdots+ \hbar^k \hat{\Pi}_n^{\rho,k}.
    \end{equation*}
    The existence of the desired operator $\hat{\Pi}_n^{\rho,\hbar}$ will follow from a standard Borel summation procedure.

    To begin with, observe that the relations~\eqref{eq:recursiveeq} hold for $k=0$ as discussed in the beginning of the section, simply by composition of principal symbols. Now, assume that we have constructed $\hat{\Pi}_n^{\rho,0},\hat{\Pi}_n^{\rho,1},\dots, \hat{\Pi}_n^{\rho,k}$ for some $k\geq 0$ such that the relations~\eqref{eq:recursiveeq} hold.
    Introducing the notation $\Pi_n^{\rho,k+1}\in S^{-\infty}(\Ghat M)$, we are looking for a symbol that satisfies
    \begin{equation}
        \label{eq:recursivePik+1}
          \Pi_n^{\rho}\,\Pi_n^{\rho,k+1} - \Pi_n^{\rho,k+1} \Pi_n^{\rho,\perp}+ R_{k+1,0} = 0\quad \mbox{and}\quad [H,\Pi_n^{\rho,k+1}]+ T_{k+1,0} = 0,
    \end{equation}
    where we denoted $\princ\left(R_{k+1}\right)$ and $\princ(T_{k+1})$ respectively by $R_{k+1,0}$ and $T_{k+1,0}$, and where we set $\Pi_n^{\rho,\perp} = {\rm Id}-\Pi_n^\rho$.

    The first relation in~\eqref{eq:recursivePik+1} defines the diagonal part of $\Pi_n^{\rho,k+1}$ on $\{\rho=1\}$ in the following sense:
    \begin{equation}
        \label{eq:Pi_k+1_diag}
        \Pi_n^{\rho}\,\Pi_n^{\rho,k+1}\Pi_n^{\rho} = -\Pi_n^{\rho}R_{k+1,0}\Pi_n^{\rho}\quad\mbox{and}\quad \Pi_n^{\rho,\perp}\Pi_n^{\rho,k+1}\Pi_n^{\rho,\perp} = \Pi_n^{\rho,\perp}R_{k+1,0}\Pi_n^{\rho,\perp},
    \end{equation}
    and it is enough to satisfy the first relation in~\eqref{eq:recursivePik+1} if and only if one can check the following first compatibility relation 
    \begin{equation}\label{eq:comp_1}
        [\Pi_n^{\rho},R_{k+1,0}]=0.
    \end{equation}
    The second relation in~\eqref{eq:recursivePik+1} determines the anti-diagonal part of $\Pi_n^{\rho,k+1}$: indeed, it writes 
    \begin{equation}\label{eq:rec_k}
        [H,\Pi_n^{\rho} \Pi_n^{\rho,k+1}\Pi_n^{\rho,\perp}]=-\Pi_n^{\rho} 
        T_{k+1,0}\Pi_n^{\rho,\perp}\quad\mbox{and}\quad [H,\Pi_n^{\rho,\perp} \Pi_n^{\rho,k+1}\Pi_n^{\rho}]=-\Pi_n^{\rho,\perp} T_{k+1,0}\Pi_n^{\rho}.
    \end{equation}
    This system can be solved if and only if 
    \begin{equation}\label{eq:comp_2}
        \Pi_n^{\rho}[H,R_{k+1,0}]\Pi_n^{\rho}=\Pi_n^{\rho} T_{k+1,0}\Pi_n^{\rho}\quad\mbox{and}\quad
	   \Pi_n^{\rho,\perp}[H,R_{k+1,0}]\Pi_n^{\rho,\perp}=-\Pi_n^{\rho,\perp} T_{k+1,0}\Pi_n^{\rho,\perp}\,.
	\end{equation}
    Thus, Equations~\eqref{eq:comp_1} and~\eqref{eq:comp_2} are compatibility relations. 

    Postponing the verification of these compatibility conditions, we can reformulate the first equation in Equation~\eqref{eq:rec_k} as follows: let $Y = -\Pi_n^{\rho} 
    T_{k+1,0}\Pi_n^{\rho,\perp}$, we are looking for $X = \Pi_n^{\rho} \Pi_{k+1}\Pi_n^{\rho,\perp}$ such that
    \begin{equation*}
        (H\Pi_n^{\rho}) X - X (H \Pi_n^{\rho,\perp}) = Y.
    \end{equation*}
    This problem is solved by the Sylvester’s Theorem since the spectral gap between eigenvalues of $H$ is uniformly bounded by below on the set $\{\rho = 1\}$. Moreover, following \cite[Theorem A.1]{BFKRV}, we have the following formula for the solution $X$: locally around any point $(x,\lambda)\in \Ghat_\infty M$, we can write
    \begin{equation*}
        X = -\frac{1}{2\pi i} \int_\gamma (H\Pi_n^\rho-z)^{-1} Y(H\Pi_n^{\rho,\perp}-z)^{-1} \,dz,
    \end{equation*}
    where $\gamma$ is a closed contour in the plane with winding number one around the spectrum of $\Pi_n^\rho$ and zero around the one of $\Pi_n^{\rho,\perp}$. Such a closed curve $\gamma$ exists thanks to the spectral gap between eigenvalues of the symbol $H$ in a neighborhood of $(x,\lambda)\in \supp(\rho)$. This allows us to conclude that $X$ is a symbol in $S^{-\infty}(\Ghat M)$ and thus that Equations~\eqref{eq:Pi_k+1_diag} and~\eqref{eq:rec_k} define uniquely the symbol $\Pi_n^{\rho,k+1}$, and thus a semiclassical pseudodifferential operator by setting
    \begin{equation*}
        \hat{\Pi}_n^{\rho,k} = \Op_\hbar(\Pi_n^{\rho,k})\in\Psi_\hbar^{-\infty}(M).
    \end{equation*}

    It remains to verify that the compatibility relations are satisfied. Let us start with Equation~\eqref{eq:comp_1}. We deduce from the first relation of Equation~\eqref{eq:recursiveeq} that 
    \begin{equation*}
        \begin{aligned}
            \hat{\Pi}_n^{\rho,[k]} R_{k+1} (1-\hat{\Pi}_n^{\rho,[k]}) 
            &= \hbar^{-k-1} \hat{\Pi}_n^{\rho,[k]}( \hat{\Pi}_n^{\rho,[k]}\circ\hat{\Pi}_n^{\rho,[k]} - \hat{\Pi}_n^{\rho,[k]})(1-\hat{\Pi}_n^{\rho,[k]})\\
            &= -\hbar^{-k-1} (\hat{\Pi}_n^{\rho,[k]}\circ \hat{\Pi}_n^{\rho,[k]}-\hat{\Pi}_n^{\rho,[k]})^2 = -\hbar^{k+1} R_{k+1}^2,
        \end{aligned}
    \end{equation*}
    whence $\Pi_n^\rho R_{k+1,0}(1-\Pi_n^\rho) = 0$. One argues similarly with $(1-\Pi_n^\rho)R_{k+1,0}\Pi_n^\rho$.
    To prove Equation~\eqref{eq:comp_2}, we use Equation~\eqref{eq:recursiveeq} and write
    \begin{equation*}
        \begin{aligned}
            \hat{\Pi}_n^{\rho,[k]} T_{k+1} \hat{\Pi}_n^{\rho,[k]}
            &= \hbar^{-k-1}\hat{\Pi}_n^{\rho,[k]} [-\hbar^2 \Delta_{sR}, \hat{\Pi}_n^{\rho,[k]}]\hat{\Pi}_n^{\rho,[k]}\\
            &= \hbar^{-k-1} \hat{\Pi}_n^{\rho,[k]}\left(\hat{\Pi}_n^{\rho,[k]}(-\hbar \Delta_{sR})(\hat{\Pi}_n^{\rho,[k]})^2 - (\hat{\Pi}_n^{\rho,[k]})^2(-\hbar^2\Delta_{sR})\hat{\Pi}_n^{\rho,[k]}\right)\hat{\Pi}_n^{\rho,[k]}\\
            &= \hat{\Pi}_n^{\rho,[k]} (-\hbar^2\Delta_{sR})R_{k+1} - R_{k+1}(-\hbar^2\Delta_{sR})\hat{\Pi}_n^{\rho,[k]},
        \end{aligned}
    \end{equation*}
    whence $\Pi_n^\rho H R_{k+1,0} - R_{k+1,0}H\Pi_n^\rho = \Pi_n^\rho T_{k+1,0}\Pi_n^\rho$, which implies
    \begin{equation*}
        \Pi_n^\rho [H,R_{k+1,0}]\Pi_n^\rho = \Pi_n^\rho T_{k+1,0}\Pi_n^\rho,
    \end{equation*}
    since $[\Pi_n^\rho,R_{k+1,0}] = 0$. One argues similarly for the other relation. 
    This concludes the induction argument. By standard Borel summation procedure, we have the existence of a semiclassical pseudodifferential operator $\hat{\Pi}_n^{\rho,\hbar} \in \Psi_\hbar^{-\infty}(M)$ such that
    \begin{equation*}
        \hat{\Pi}_n^{\rho,\hbar} \sim \sum_{k\geq 0} \hbar^k \hat{\Pi}_n^{\rho,k}.
    \end{equation*}
    Observe that, since for every step $k\geq 0$, the symbol $\Pi_n^{\rho,k+1}$ is uniquely determined by the operator $\hat{\Pi}_n^{\rho,[k]}$ and by Equations~\eqref{eq:recursivePik+1}, the final operator $\hat{\Pi}_n^\rho$ is uniquely determined microlocally on $\{\rho=1\}$.
    Finally, in order to end the proof, we check that we have
    \begin{equation}
        \label{eq:projmicrolocal}
        (\hat{\Pi}_n^{\rho,\hbar})^* = \hat{\Pi}_n^{\rho,\hbar},
    \end{equation}
    microlocally on $\{\rho=1\}$ by the following uniqueness argument: indeed, the semiclassical pseudodifferential operator $(\hat{\Pi}_n^{\rho,\hbar})^*$ has principal symbol $\Pi_n^\rho$, and by self-adjointness of the subLaplacian $-\hbar^2\Delta_{sR}$, it also satisfies
    \begin{equation*}
        [-\hbar^2 \Delta_{sR},(\hat{\Pi}_n^{\rho,\hbar})^*] = 0 \quad\mbox{microlocally on}\ \{\rho=1\},
    \end{equation*}
    and is also trivially microlocally a projector. Thus, Equation~\eqref{eq:projmicrolocal} holds.
\end{proof}

\begin{remark}
    Since this recursive construction of the Landau projectors $\hat{\Pi}_n^{\rho,\hbar}$, $n\geq 0$, once having singled out a particular quantization procedure $\Op_\hbar$, entirely follows the proof of \cite[Theorem 1.2]{BFKRV}, the propositions \cite[Proposition 1.3]{BFKRV} and \cite[Proposition 2.5]{BFKRV}, also extends to our setting. In particular, we have that for $n,m \geq 0$ two distinct integers the corresponding Landau projectors are microlocally orthogonal:
    \begin{equation*}
        \hat{\Pi}_n^{\rho,\hbar}\circ \hat{\Pi}_m^{\rho,\hbar} = 0\quad\mbox{microlocally on}\ \{\rho = 1\}.
    \end{equation*}
    Similarly, for $f\in C_c^\infty(\R)$, we also have
    \begin{equation*}
         [f(-\hbar^2\Delta_{sR}),\hat{\Pi}_n^{\rho,\hbar}] = 0\quad\mbox{microlocally on}\ \{\rho=1\}.
     \end{equation*}
     However, we will not make use of these two further properties in the following sections.
\end{remark}

\section{Egorov theorem}
\label{sect:egorov}

This section is dedicated to the proof of an Egorov theorem for the subLaplacian when restricted to each Landau level, i.e.~the ranges of the Landau projectors.

\subsection{Toeplitz operators}

Let $\rho\in C_c^\infty(\Ghat_\infty M,[0,1])$ be an admissible cutoff function. We introduce particular spaces of observables in $\Psi_\hbar^{-\infty}(M)$ corresponding to each Landau level.

\begin{definition}
    \label{def:gentoeplitz}
    For $n\geq 0$, the space of Toeplitz operators associated with the $n$-th Landau level, denoted by $\mathscr{T}_n^\rho(M)$, is the subspace of $\Psi_\hbar^{-\infty}(M)$ composed of semiclassical pseudodifferential operators of the form
    \begin{equation*}
        \hat{\Pi}_n^{\rho,\hbar}\circ A \circ\hat{\Pi}_n^{\rho,\hbar},
    \end{equation*}
    for $A\in\Psi_\hbar^{\infty}(M)$ which is admissible and with its wavefront set contained in $\{\rho = 1\}$.
\end{definition}

The denomination of Toeplitz operators has been introduced in a similar setting by various authors, e.g.\ \cite{Ep} and \cite{Tay}, where the role of the Landau projectors is played by similar operators called generalized Szegö projectors.

\vspace{0.25cm}

Working with the space of observables $\mathscr{T}_n^\rho(M)$, we are led to consider the following vector bundle over $\Ghat_\infty M\sim\Sigma$.

\begin{definition}
    \label{def:fibreLandau}
    For $n\geq 0$, we define the \emph{$n$-th Landau bundle} $\mathscr{L}_n(N\Sigma)$ as the smooth bundle over $\Sigma$ obtained as the range of the smooth field of orthogonal Landau projectors $\Pi_n$ acting on the Fock Hilbert bundle $\mathscr{F}(N\Sigma)$:
    \begin{equation*}
        \mathscr{L}_n(N\Sigma) = \Pi_n \mathscr{F}(N\Sigma).
    \end{equation*}
\end{definition}

This bundle is of use for the following particular reason: for any observable in $\mathscr{T}_n^\rho(M)$ of the form $\hat{\Pi}_n^{\rho,\hbar}A\hat{\Pi}_n^{\rho,\hbar}$, the notion of principal symbol as introduced in Proposition~\ref{prop:principalsym}, can then naturally be reinterpreted as a section of the bundle ${\rm End}(\mathscr{L}_n(N\Sigma))$.

\vspace{0.25cm}

The following crucial result clarifies the role played by the Landau projectors in the understanding of the relation between the subLaplacian and the Reeb vector field on the contact manifold $(M,\eta)$.

\begin{proposition}
    \label{prop:opeffectif}
    For $n\geq 0$, there exists a unique Toeplitz operator $\Theta_n\in\mathscr{T}_n^\rho(M)$ satisfying, microlocally on $\{\rho = 1\}$,
    \begin{equation}
        \label{eq:effectiflandau}
        -\hbar^2 \Delta_{sR}\,\hat{\Pi}_n^{\rho,\hbar} = \hat{\Pi}_n^{\rho,\hbar}\left((2n+d)\hbar^2|R|\right)\hat{\Pi}_n^{\rho,\hbar} + \hbar^2 \Theta_n.
    \end{equation}
    We denote by $\theta_n\in C^\infty(\Ghat_\infty M,{\rm End}(\mathscr{L}_n(N\Sigma)))$ the principal symbol of $\Theta_n$.
\end{proposition}

This result can be interpreted as a global normal form of the subLaplacian when restricted to a Landau level.

\begin{remark}
    Since the remainder $\Theta_n$ is uniquely determined microlocally on $\{\rho = 1\}$, a sequence of cutoff functions $\rho$ converging to $1$ can be used to uniquely define the principal symbol $\theta_n$ as a section on the whole characteristic manifold $\Sigma$. Thus, we do not mark the dependence on the cutoff $\rho$ in its notation.
\end{remark}

\begin{proof}[Proof of Proposition~\ref{prop:opeffectif}]
    By definition of the space $\mathscr{T}_n^\rho(M)$, the proposition can be deduced from the following result:
    \begin{equation}
        \label{eq:noorder1}
        \hat{\Pi}_n^{\rho,\hbar}\left(-\hbar^2\Delta_{sR} - (2n+d)\hbar^2|R|\right)\hat{\Pi}_n^{\rho,\hbar} \in \hbar^2\Psi_\hbar^{-\infty}(M)\quad\mbox{microlocally on}\ \{\rho=1\}.
    \end{equation}
    Note that by simple symbolic calculus, by definition of $\Pi_n^\rho$, we know that this operator is at least in the space $\hbar\Psi_\hbar^{-\infty}(M)$.
    
    Following the definition of the pseudodifferential calculus, one can localize on $M$ in order to make use of a frame and a local quantization to compute this pseudodifferential operator. We choose to do so with respect to a Darboux frame $\tilde{\bX} = (\tilde{X}_1,\dots,\tilde{X}_d,\tilde{Y}_1,\dots,\tilde{Y}_d,R)$. In such a frame, as Section~\ref{subsubsect:locsemiquantiz} and Equation~\eqref{eq:equivquantizations} demonstrate, the local quantization given by Equation~\eqref{eq:localquantization} then coincides with the semiclassical quantization $\Op_{\Heis^d,\hbar}$ on the Heisenberg group $\Heis^d$.

    This allows us to write these different operators as follows 
    \begin{equation*}
        -\hbar^2 \Delta_{sR} = \Op_{\Heis^d,\hbar}(\tilde{H}_0 + \hbar \tilde{H}_1) \quad\mbox{and}\quad \hbar^2|R| = \Op_{\Heis^d,\hbar}(\lambda),
    \end{equation*}
    where $\tilde{H}_0$ and $\tilde{H}_1$ are respectively in $S^2(\Heis^d\times\Hhat^d)$ and $S^1(\Heis^d\times\Hhat^d)$, as well as
    \begin{equation*}
        \hat{\Pi}_n^{\rho,\hbar} = \Op_{\Heis^d,\hbar}(\tilde{\Pi}_n^0 + \hbar\tilde{\Pi}_n^1+\hbar^2 \tilde{\Pi}_n^{\geq2}),
    \end{equation*}
    with $\tilde{\Pi}_n^0$, $\tilde{\Pi}_n^1$ and $\tilde{\Pi}_n^{\geq2}$ being smoothing symbols on $\Heis^d$. Moreover, by definition, we have the following relation:
    \begin{equation*}
        \tilde{H}_0 \tilde{\Pi}_n^0 = (2n+d)\tilde{\Pi}_n^0.
    \end{equation*}

    In order to proceed with the proof, we need to say more about the principal symbol $\tilde{H}_0$ of the subLaplacian, computed with respect to the Darboux frame $\tilde{\bX}$. Introducing the notation $(\tilde{g}_{i,j}(x))_{1\leq i,j\leq 2d}$, $x\in\Heis^d$, for the matrix associated to the inner product $g_{\cD}(x)$ computed in the left-invariant basis $(\tilde{V}_1,\dots,\tilde{V}_d) = (\tilde{X}_1,\dots,\tilde{X}_d,\tilde{Y}_1,\dots,\tilde{Y}_d)$. The symbol $\tilde{H}_0$ is then given by
    \begin{equation*}
        \tilde{H}_0(x,\tilde{\pi}^\lambda) = \sum_{1\leq i,j\leq 2d} \tilde{g}_{i,j}(x)\,\tilde{\pi}^\lambda(\tilde{V}_i)\tilde{\pi}^\lambda(\tilde{V}_j).
    \end{equation*}
    Thus, it is a second order differential operator acting on $\cH_{\tilde{\pi}^\lambda}^{-\infty} = L^2(\tilde{\mathfrak{p}}_x)$. Moreover, it can be written as a pseudodifferential operator whose Weyl symbol is given by the quadratic form associated to the inner product $g_{\cD}(x)$ acting on $\cD_x \sim \tilde{\mathfrak{v}}_x$. Here, the vector space $\tilde{\mathfrak{v}}_x$ is seen as a symplectic space with symplectic form $(d\eta)_{|\cD,x}$, split as the sum of two Lagrangian subspaces $\tilde{\mathfrak{v}}_x = \tilde{\mathfrak{p}}_x\oplus\tilde{\mathfrak{q}}_x$. In particular, since the inner product $g_\cD$ comes from an adapted complex structure given by the endomorphism $\phi_{|\cD}$, it follows from the construction of $\phi$-frames in Section~\ref{subsubsect:phiframe} or equivalently from Williamson's theorem (see \cite[Section 10]{Ep} and \cite[Proposition 32]{MAG}) that one can find symplectic coordinates $(\xi,\zeta)\in\R^{2d}$ such that the quadratic form associated to $g_\cD(x)$ can be written as
    \begin{equation*}
        |\lambda|\sum_{i=1}^d  (\xi_i^2 + \zeta_i^2).
    \end{equation*}
    We conclude that, up to conjugation with a metaplectic unitary operator acting on $L^2(\tilde{\mathfrak{p}}_x)$, the symbol $\tilde{H}_0(x,\tilde{\pi}^\lambda)$ can be reduced to a sum of harmonic oscillators.

    Now observe that by the composition formula for the Heisenberg quantization (see for example \cite[Theorem 5.1]{FM}), we can write
    \begin{equation*}
        \left(-\hbar^2\Delta_{sR} - (2n+d)\hbar^2|R|\right)\hat{\Pi}_n^{\rho,\hbar} = \hbar\Op_{\Heis^d,\hbar}\left(T_1\right) + \hbar^2 \Psi_\hbar^{-\infty}(\Heis^d),
    \end{equation*}
    where 
    \begin{equation*}
        T_1 = (\tilde{H}_0-(2n+d)|\lambda|)\tilde{\Pi}_n^1 + \tilde{H}_1\tilde{\Pi}_n^0 + \Delta_{\tilde{v}} (\tilde{H}_0-(2n+d)|\lambda|) \cdot \tilde{V}\tilde{\Pi}_n^0.
    \end{equation*}
    Thus, using that the operator $\hat{\Pi}_n^{\rho,\hbar}$ is microlocally a projector on $\{\rho=1\}$, in order to prove Equation~\eqref{eq:noorder1}, it is enough to prove that we have
    \begin{equation*}
        \tilde{\Pi}_n^0 T_1 \tilde{\Pi}_n^0 = 0\quad\mbox{on}\  \{\rho=1\}.
    \end{equation*}
    In order to simplify the expression of the symbol $T_1$, we need to make use of the symbolic equation~\eqref{eq:recursivePik+1} satisfied by $\tilde{\Pi}_n^1$ as the first corrector of $\tilde{\Pi}_n^0$ in the construction of the Landau projector using the Heisenberg quantization $\Op_{\Heis^d,\hbar}$ associated to the Darboux frame $\tilde{\bX}$. Explicitly, one finds, once again by the composition formula for semiclassical pseudodifferential operators, that
    \begin{equation*}
        [\tilde{H}_0,\tilde{\Pi}_n^1] + [\tilde{H}_1,\tilde{\Pi}_n^0] + \Delta_{\tilde{v}} \tilde{H}_0\cdot \tilde{V}\tilde{\Pi}_n^0 - \Delta_{\tilde{v}} \tilde{\Pi}_n^0\cdot \tilde{V}\tilde{H}_0 = 0.
    \end{equation*}
    Thus, on $\{\rho=1\}$ we have
    \begin{equation*}
        \tilde{\Pi}_n^0 T_1 \tilde{\Pi}_n^0 = \tilde{\Pi}_n^0\tilde{H}_1 \tilde{\Pi}_n^0 + \tilde{\Pi}_n^0\left(\Delta_{\tilde{v}} \tilde{\Pi}_n^0\cdot \tilde{V}\tilde{H}_0\right) \tilde{\Pi}_n^0.
    \end{equation*}
    One then concludes as follows: up to metaplectic conjugation as described above, one is reduced to the case where $\tilde{H}_0$ is a sum of harmonic oscillators and $\tilde{\Pi}_n^0$ is a projector on the $n$-th Landau level. By Remark~\ref{rem:oddeven}, the range of $\tilde{\Pi}_n^0$ is composed of either odd or even functions depending on $n$. Since the symbol $\Tilde{H}_1$ is a sum of first order homogeneous differential operators and of multiplication operators by first order homogeneous polynomials, it follows from a parity argument that we have $\tilde{\Pi}_n^0\tilde{H}_1 \tilde{\Pi}_n^0 = 0$.
    By the same argument, since the difference operators $\Delta_{\tilde{v}}$ are commutators with first order homogeneous differential operators or multiplication operators by first order homogeneous polynomials (see \cite[Section 6.3.3]{FR}) and since we have
    \begin{equation*}
        \tilde{V}\tilde{H}_0(x,\tilde{\pi}^\lambda) = \sum_{1\leq i,j\leq 2d} \tilde{V}\tilde{g}_{i,j}(x)\,\tilde{\pi}^\lambda(\tilde{V}_i)\tilde{\pi}^\lambda(\tilde{V}_j), 
    \end{equation*}
    we also deduce that $\tilde{\Pi}_n^0\left(\Delta_{\tilde{v}} \tilde{\Pi}_n^0\cdot \tilde{V}\tilde{H}_0\right) \tilde{\Pi}_n^0 = 0$, and this ends the proof.
\end{proof}

In order to make use of the previous proposition to compare the quantum propagation of the subLaplacian on each Landau level with the one of the Reeb vector field, we are led to rewrite Equation~\eqref{eq:effectiflandau} as follows: for $n\geq 0$, we have
\begin{equation}
    \label{eq:effectivesubLapequality}
    -\hbar^2 \Delta_{\rm sR}\,\hat{\Pi}_n^{\rho,\hbar} = -\hbar^2\Delta_{\rm sR}^{{\rm eff},n}\,\hat{\Pi}_n^{\rho,\hbar},
\end{equation}
microlocally on $\{\rho = 1\}$, where we introduce the \emph{effective subLaplacian} $\Delta_{\rm sR}^{{\rm eff},n}$ defined by
\begin{equation*}
    -\hbar^2\Delta_{\rm sR}^{{\rm eff},n} = (2n+d)\hbar^2|R| + \hbar^2\tilde{\Theta}_n^\hbar,
\end{equation*}
and where we set
\begin{equation*}
    \tilde{\Theta}_n^\hbar = \Theta_n^\hbar + \frac{2n+d}{\hbar^2}\left[\hat{\Pi}_n^{\rho,\hbar},\hbar^2 |R|\right]\in \Psi_\hbar^{-\infty}(M).
\end{equation*}

We stress that the fact that the commutator $\left[\hat{\Pi}_n^{\rho,\hbar},\hbar^2|R|\right]$ on the right-hand side of the previous equation defines a semiclassical pseudodifferential operator in $\hbar^2\Psi_\hbar^{-\infty}(M)$ follows from a straightforward computation in a local Darboux frame: denoting by $\tilde{\Pi}_n^{\rho,0}$ the principal symbol of $\hat{\Pi}_n^{\rho,\hbar}$ in the trivialization given by this frame, we can apply the symbolic calculus associated with the quantization of the Heisenberg group, as in \cite[Theorem 5.7]{FM}, to obtain that, locally,
\begin{equation*}
    \left[\hat{\Pi}_n^{\rho,\hbar},\hbar^2 |R|\right] = \hbar^2\Op_{\Heis^d,\hbar}(\vec{|\lambda|} \tilde{\Pi}_n^{\rho,0}) + \hbar^3\Psi_\hbar^{-\infty}(M),
\end{equation*}
where we used the fact that the action of the difference operators of the first stratum $\Delta_{x_j}, \Delta_{y_j}$ as defined in \cite[Section 6.3]{FR}, have trivial actions on the symbol $|\lambda|$ of the Reeb vector field. In particular, we observe that the principal symbol of this commutator is anti-diagonal with respect to $\Pi_n$, meaning that if we denote by $\tilde{\theta}_n$ the principal symbol of $\tilde{\Theta}_n^\hbar$, we have
\begin{equation}
    \label{eq:relationtildethetatheta}
    \Pi_n \tilde{\theta}_n\Pi_n = \theta_n.
\end{equation}

We wish to emphasize here that the \emph{effective subLaplacian} $-\hbar^2\Delta_{\rm sR}^{{\rm eff},n}$ on the $n$-th Landau level is no longer a self-adjoint operator but will, however, be crucial for the proof of our Egorov theorem in the next section.

\subsection{Egorov theorem for Toeplitz operators}

Proposition~\ref{prop:opeffectif} leads us to consider a particular geometric flow on the Landau bundles that we now explicit.

\subsubsection{Partial connections on the Landau bundles}
\label{subsubsect:partialconnectLandau}

Indeed, the result of Proposition~\ref{prop:opeffectif} tells us that the subLaplacian $-\hbar^2\Delta_{sR}$ acts effectively on the $n$-th Landau level, $n\geq 0$, as does the  operator $\hbar^2|R| + \hbar^2\Theta_n$. This effective operator defines a geometric flow $\Phi^n_t$ on the $n$-th Landau bundle $\mathscr{L}_n(N\Sigma)$ as follows: the flow lines $(\sigma(t),v(t))$ consist of the orbits $\sigma(t)\in\Sigma$ of the Hamiltonian vector field $\vec{\lambda}\in\Gamma(\Sigma)$ as defined by Proposition~\ref{prop:hamliftReeb} and such that $v(t)\in\mathscr{L}_n(N_{\sigma(t)}\Sigma)$ satisfies 
\begin{equation*}
    \frac{d}{dt}v(t) = i\theta_n(\sigma(t))v(t).
\end{equation*}
Note that by Proposition~\ref{prop:hamliftReeb}, $\Phi^n_t$ lifts the Reeb flow on $M$ to the $n$-th Landau bundle.

Observe that the induced flow $(\Phi^n_t)^*$ on $C^\infty(\Sigma,\mathscr{L}_n(N\Sigma))$ defines a partial connection on the space of section of $\mathscr{L}_n(N\Sigma)$. Hence, there is also a flow ${\rm Ad}(\Phi^n_t)$ on the bundle ${\rm End}(\mathscr{L}_n(N\Sigma))$, which in turn defines an action of $\R$ on $C^\infty(\Sigma,{\rm End}(\mathscr{L}_n(N\Sigma)))$, denoted by ${\rm Ad}(\Phi^n_t)^*$.

It is straightforward to check that if $\sigma\in C^\infty(\Sigma,{\rm End}(\mathscr{L}_n(N\Sigma)))$, for any local trivialization given, for example, by a $\phi$-frame $(\bX,U)$, we have
\begin{equation}
    \label{eq:connectionLandau}
    \frac{d}{dt}{\rm Ad}(\Phi^n_t)^*\sigma = \vec{\lambda}\sigma + i[\theta_n,\sigma].
\end{equation}

\subsubsection{The Egorov Theorem}

We are now able to state an Egorov theorem for each class of Toeplitz operators.

\begin{theorem}
    \label{thm:Egorov}
    Let $n\geq 0$ and fix $T\geq 0$. For any Toeplitz operator $B\in \mathscr{T}_n^\rho(M)$ with a principal symbol denoted by $\beta\in C^\infty(\Ghat_\infty M, \mathscr{L}_n(N\Sigma))$, the operators defined by
    \begin{equation*}
        \tilde{B}(t) := e^{it\Delta_{sR}} \circ B\circ e^{-it\Delta_{sR}},
    \end{equation*}
    form a family of Toeplitz operators in $\mathscr{T}_n^\rho(M)$ whose principal symbols $(\tilde{\beta}(t))_{t\in[-T,T]}$ satisfy
    \begin{equation*}
        \forall\,t\in[-T,T],\quad \tilde{\beta}(t) = {\rm Ad}(\Phi^n_t)^* \beta.
    \end{equation*}
    In particular, we have
    \begin{equation*}
        \forall\,t\in[-T,T],\quad{\rm WF}'_\infty(\tilde{B}(t))\subset \{\rho = 1\}.
    \end{equation*}
\end{theorem}

\begin{proof}
    We start by recalling that $\Op_\hbar$ denotes a fixed global quantization procedure on $M$. This allows us to write the Toeplitz operator as follows:
    \begin{equation*}
        B = \hat{\Pi}_n^{\rho,\hbar}(\Op_\hbar(\beta_\hbar) + R^\hbar)\hat{\Pi}_n^{\rho,\hbar},
    \end{equation*}
    where $\beta_\hbar = \beta + \O(\hbar)$ is in $S^{-\infty}(\Ghat M)$ and $R^\hbar$ is semiclassically smoothing on the Sobolev scale; moreover, we have ${\rm ess\,supp}(\beta_\hbar) \subset\{\rho = 1\}$.
    It is enough to construct a smooth family $t\in[-T,T]\mapsto \tilde{\beta}_\hbar(t)\in S^{-\infty}(\Ghat M)$ of symbols contained in $C^\infty(\Ghat_\infty M, \mathscr{L}_n(N\Sigma))$ such that $\tilde{\beta}_\hbar(0) = \beta_\hbar$ and satisfying
    \begin{equation}
        \label{eq:egorovnecessaryeq}
        \hat{\Pi}_n^{\rho,\hbar} \Op_\hbar(\partial_t\tilde{\beta}_\hbar(t))\hat{\Pi}_n^{\rho,\hbar} = \frac{i}{\hbar^2}\left[-\hbar^2\Delta_{sR},\hat{\Pi}_n^{\rho,\hbar} \Op_\hbar(\tilde{\beta}_\hbar(t))\hat{\Pi}_n^{\rho,\hbar}\right] + \tilde{R}^\hbar(t),
    \end{equation}
    where $\tilde{R}^\hbar(t)$ depends smoothly on $t$ and is semiclassically smoothing on the Sobolev scale, and where, furthermore, the essential support of $\tilde{\beta}_\hbar(t)$ is contained in $\{\rho =1\}$ at all times. Then, by simple integration in time and the use of Duhamel's formula associated with the propagator $e^{it\Delta_{sR}}$, we obtain that $\tilde{B}(t)$ coincides with $\hat{\Pi}_n^{\rho,\hbar} \Op_\hbar(\tilde{\beta}_\hbar(t))\hat{\Pi}_n^{\rho,\hbar}$ up to a semiclassically smoothing remainder.

    Now observe that using Equation~\eqref{eq:effectivesubLapequality}, the commutator in Equation~\eqref{eq:egorovnecessaryeq} can be rewritten as follows:
    \begin{multline*}
            \left[-\hbar^2\Delta_{sR},\hat{\Pi}_n^{\rho,\hbar} (\Op_\hbar(\tilde{\beta}_\hbar(t))\hat{\Pi}_n^{\rho,\hbar}\right] = -\hbar^2\Delta_{sR} \hat{\Pi}_n^{\rho,\hbar} \,\Op_\hbar(\tilde{\beta}_\hbar(t)\,\hat{\Pi}_n^{\rho,\hbar} - \hat{\Pi}_n^{\rho,\hbar} \,\Op_\hbar(\tilde{\beta}_\hbar(t)\,\hat{\Pi}_n^{\rho,\hbar}(-\hbar^2\Delta_{sR})\\
            = \hat{\Pi}_n^{\rho,\hbar} (-\hbar^2\Delta_{\rm sR}^{{\rm eff},n})^*\,\Op_\hbar(\tilde{\beta}_\hbar(t))\,\hat{\Pi}_n^{\rho,\hbar} - \hat{\Pi}_n^{\rho,\hbar} \,\Op_\hbar(\tilde{\beta}_\hbar(t))\, (-\hbar^2\Delta_{\rm sR}^{{\rm eff},n})\hat{\Pi}_n^{\rho,\hbar} + \O(\hbar^\infty)\\
            = \hat{\Pi}_n^{\rho,\hbar}((-\hbar^2\Delta_{\rm sR}^{{\rm eff},n})^*\,\Op_\hbar(\tilde{\beta}_\hbar(t)) - \Op_\hbar(\tilde{\beta}_\hbar(t))\, (-\hbar^2\Delta_{\rm sR}^{{\rm eff},n}))\hat{\Pi}_n^{\rho,\hbar}+\O(\hbar^\infty).
    \end{multline*}
    Thus, at the level of the principal symbols, using Equation~\eqref{eq:relationtildethetatheta} and any local trivialization given by a Darboux frame, Equation~\eqref{eq:egorovnecessaryeq} becomes
    \begin{equation}
        \label{eq:firsteqegorov}
        \partial_t \tilde{\beta}(t) = \vec{\lambda}\tilde{\beta}(t) + i[\theta_n,\tilde{\beta}(t)],
    \end{equation}
    where we introduce the notation $\tilde{\beta}$ for the leading term of $\tilde{\beta}_\hbar$.
    This leads us to define $\tilde{\beta}(t)$ by
    \begin{equation*}
        \tilde{\beta}(t) = {\rm Ad}(\Phi_t^n)^*\beta.
    \end{equation*}
    Observe that $t\mapsto\tilde{\beta}(t)$ satisfies the requirement of being a section of $\mathscr{L}_n(N\Sigma)$ with support in $\{\rho = 1\}$.
    
    Now, suppose that for some $N\geq 0$, a family $\tilde{\beta}(t),\tilde{\beta}_1(t),\dots,\tilde{\beta}_N(t)$ has been constructed such that 
    \begin{equation*}
        \hat{\Pi}_n^{\rho,\hbar} \Op_\hbar(\partial_t\tilde{\beta}_\hbar^{\leq N}(t))\hat{\Pi}_n^{\rho,\hbar} - \frac{i}{\hbar^2}\left[-\hbar^2\Delta_{sR},\hat{\Pi}_n^{\rho,\hbar} \Op_\hbar(\tilde{\beta}_\hbar^{\leq N}(t))\hat{\Pi}_n^{\rho,\hbar}\right] = \hbar^{N+1} \tilde{R}_N^\hbar(t)\in \hbar^{N+1}\Psi_\hbar^{-\infty}(M),
    \end{equation*}
    where we set $\tilde{\beta}_\hbar^{\leq N}(t) = \tilde{\beta}(t) + \hbar \tilde{\beta}_1(t)+\dots+\hbar^N\tilde{\beta}_N(t)$, and satisfying, moreover, $\tilde{\beta}_1(0) = \dots = \tilde{\beta}_N(0) = 0$. We point out that the wavefront set of $\tilde{R}_N^\hbar(t)$ is contained in $\{\rho = 1\}$ by simple application of the results obtained in Section~\ref{subsubsect:wavefrontset}. To perform our induction, we are now looking for a symbol $\tilde{\beta}_{N+1}(t)$ that allows us to go one degree further in $\hbar$; using the same computations that led to Equation~\eqref{eq:firsteqegorov}, this requirement translates into the following condition:
    \begin{equation*}
        \partial_t \tilde{\beta}_{N+1}(t) = \vec{\lambda}\tilde{\beta}_{N+1}(t) + i[\theta_n,\tilde{\beta}_{N+1}(t)] - \tilde{r}_n(t),
    \end{equation*}
    where $\tilde{r}_n$ denotes the principal symbol of $\tilde{R}_N$, which is also a section of $\mathscr{L}_n(N\Sigma)$. Using Duhamel's formula for the flow ${\rm Ad}(\Phi_t^n)^*$, this equation can be solved for the initial condition $\tilde{\beta}_{N+1}(0) = 0$.

    By Borel summation, we can define the desired symbol $\tilde{\beta}_\hbar(t)$ as $\tilde{\beta}(t)+ \sum_{k\geq 1}\hbar^k\tilde{\beta}_k(t)$, which is easily seen to satisfy all requirements. This concludes our proof.
\end{proof}

\section{Quantum ergodicity}
\label{sect:quantumergodicity}

The proof of Theorem~\ref{thm:main} follows the general strategy of proving quantum ergodicity in the elliptic setting and begins with investigating the microlocal Weyl law for observables in $\Psi_\hbar^{-\infty}(M)$.

\subsection{Microlocal Weyl laws}

We recall the following notations concerning the eigenvalue problem for the subLaplacian. The self-adjoint operator $(-\Delta_{sR},{\rm Dom}(-\Delta_{sR}))$ has a discrete point spectrum given by the sequence 
\begin{equation*}
    0\leq \delta_0\leq \delta_1\leq \dots\leq\delta_k\leq\dots\rightarrow +\infty,
\end{equation*} 
(with repetitions according to multiplicity). Moreover, there exists an orthonormal basis $(\varphi_k)_{k\in\N}$, such that for every $k\in \N$,
\begin{equation*}
    -\Delta_{sR}\varphi_k = \delta_k \varphi_k \quad\mbox{and}\quad \|\varphi_k\|_{L^2(M)} = 1.
\end{equation*}

Thus, reintroducing the semiclassical parameter $\hbar>0$ in front of the subLaplacian, we can write
\begin{equation*}
    \forall k\in \N,\quad -\hbar^2\Delta_{sR}\varphi_k = \delta_k(\hbar)\varphi_k\quad\mbox{where}\ \delta_k(\hbar) = \hbar^2\delta_k.
\end{equation*}

We then have the following theorem concerning the microlocal repartition of the eigenvalues, to be compared with the results of \cite{Tay20}.

\begin{theorem}
    \label{thm:microlocalweyllaw}
    Let $B\in \Psi_\hbar^{-\infty}(M)$ be an admissible semiclassical pseudodifferential operator. Then, for any two real numbers $a < b$, we have the following asymptotic
    \begin{equation*}
        \hbar^{Q} \sum_{a\leq \delta_k(\hbar)\leq b}\langle B\varphi_k,\varphi_k\rangle \Tend{\hbar}{0} {\rm vol}(M)\int_{\Ghat M} {\rm Tr}\left({\bm 1}_{[a,b]}(H)(x,\pi)\beta(x,\pi)\right) d\mu_{\Ghat M}(x,\pi),
    \end{equation*}
    where $\beta \in S^{-\infty}(\Ghat M)$ denotes the principal symbol of $B$ and we recall that $Q = 2d+2$ denotes the \emph{homogeneous dimension} of $(M,\eta)$.
\end{theorem}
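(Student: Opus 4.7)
The approach is the standard two-step one for a semiclassical Weyl law: first establish a trace formula for operators in $\Psi_\hbar^{-\infty}(M)$, and then apply it with $A = f(-\hbar^2\Delta_{\rm sR}) B$ for a smooth spectral cutoff $f$ approaching $\mathbf{1}_{[a,b]}$, using Theorem~\ref{thm:functionalcalc} to place $f(-\hbar^2\Delta_{\rm sR})$ in the calculus with principal symbol $f(H)$.

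\textbf{Step 1 (trace formula).} I claim that for any admissible $A \in \Psi_\hbar^{-\infty}(M)$ with principal symbol $\alpha$,
\begin{equation*}
\operatorname{Tr}_{L^2(M)}(A) = \hbar^{-Q}\,\mathrm{vol}(M) \int_{\Ghat M} \operatorname{Tr}_{\mathcal{H}_\pi}\bigl(\alpha(x,\pi)\bigr)\,d\hat\mu_{\Ghat M}(x,\pi) + O(\hbar^{-Q+1}).
\end{equation*}
Pick an atlas $(\bX_\alpha, U_\alpha, \chi_\alpha, \psi_\alpha)$ of $\phi$-frames and the associated global quantization $\Op_\hbar$, so that, up to a semiclassically smoothing remainder, $A = \sum_\alpha \Op_\hbar^{\bX_\alpha,\chi_\alpha}(\psi_\alpha^2\sigma)$. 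By the integral-kernel formula~\eqref{eq:integral_kernel}, the trace reduces to the integral on the diagonal, whose value at $x$ is $\hbar^{-Q}\kappa_{\sigma,x}^{\bX_\alpha}(0)\,\psi_\alpha^2(x)\,\chi_\alpha(x,x)\,\jac_x(\ln_x^{\bX_\alpha})$. The cutoff is $1$ on the diagonal, and for a $\phi$-frame (being $g$-orthonormal) the jacobian at $y=x$ equals $1$ with a smooth $O(\hbar)$ correction after rescaling. Plancherel's inversion~\eqref{eq:fourierinversion} and the compatibility~\eqref{eq:compatiblesystems} of the Haar and Plancherel systems then yield $\kappa_{\sigma,x}^{\bX_\alpha}(0) = \int_{\Ghat_x M}\operatorname{Tr}_{\mathcal{H}_\pi}(\sigma(x,\pi))\,d\hat\mu_{\Ghat_xM}(\pi)$, and summing the partition of unity $\sum_\alpha \psi_\alpha^2 = 1$, together with the conversion $d\mathrm{vol} = \mathrm{vol}(M)\,d\nu$, produces the claimed leading order and error.

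\textbf{Step 2 (from smooth to sharp cutoffs).} For $f \in C_c^\infty(\R)$, Theorem~\ref{thm:functionalcalc} gives $f(-\hbar^2\Delta_{\rm sR}) = Q_{f,N}(\hbar) + R_{f,N}(\hbar)$ with $Q_{f,N}(\hbar) \in \Psi_\hbar^{-\infty}(M)$ of principal symbol $f(H)$ and $R_{f,N}(\hbar) = O_{L^2\to L^2}(\hbar^{N+1})$. Since Step 1 applied to $|B|$ yields a trace-norm bound $\|B\|_{\mathcal{L}^1(L^2)} = O(\hbar^{-Q})$, we obtain
\begin{equation*}
\sum_k f(\delta_k(\hbar))\langle B\varphi_k,\varphi_k\rangle = \operatorname{Tr}\bigl(Q_{f,N}(\hbar)B\bigr) + O(\hbar^{N+1-Q}) = \hbar^{-Q}\mathrm{vol}(M)\int_{\Ghat M}\operatorname{Tr}\bigl(f(H)\beta\bigr)\,d\hat\mu_{\Ghat M} + o(\hbar^{-Q}),
\end{equation*}
by applying Step 1 to the admissible operator $Q_{f,N}(\hbar)B$ of principal symbol $f(H)\beta$ and choosing $N$ large. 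To handle $\mathbf{1}_{[a,b]}$, take $f_-,f_+\in C_c^\infty(\R)$ with $f_-\leq\mathbf{1}_{[a,b]}\leq f_+$ and $\int(f_+-f_-)\leq\epsilon$. A first application of the above to $B = \operatorname{Id}$ (or to a nonnegative $B$) shows that the number of eigenvalues in any $\epsilon$-window is $O(\epsilon\hbar^{-Q})$; decomposing a general $B$ into real and imaginary parts, then each self-adjoint piece into positive and negative parts (e.g.\ via $B_\pm = (|B|\pm B)/2$, treating each as a bounded operator with $\|B_\pm\|\leq\|B\|$), we obtain the sandwich
\begin{equation*}
\Bigl|\,\sum_k\bigl(f_+-\mathbf{1}_{[a,b]}\bigr)(\delta_k(\hbar))\langle B\varphi_k,\varphi_k\rangle\Bigr| \leq C\|B\|_{L^2\to L^2}\,\epsilon\,\hbar^{-Q} + o(\hbar^{-Q}),
\end{equation*}
and similarly for $f_-$. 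Taking $\hbar \to 0$ and then $\epsilon \to 0$ concludes, using that the level sets $\{H = a\}$ and $\{H = b\}$ have $\hat\mu_{\Ghat M}$-measure zero (in a $\phi$-frame the spectrum of $H$ on the $\alpha$-th Hermite mode is $|\lambda|(2|\alpha|+d)$, so these levels are smooth hypersurfaces).

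\textbf{Main obstacle.} The delicate point is Step 1: matching the Plancherel normalization of the $\widehat{G_xM}$'s used in the quantization~\eqref{eq:localquantization} with the measure $\hat\mu_{\Ghat M}$, tracking the $\mathrm{vol}(M)$ factor coming from the normalization $\nu = \mathrm{vol}/\mathrm{vol}(M)$, and confirming that the $O(\hbar^{-Q+1})$ remainder is uniform with respect to the atlas. The passage from smooth cutoffs to the sharp indicator, by contrast, is routine once the Weyl counting bound in $\epsilon$-windows is in hand, which is itself a special case of Step 1 applied to a well-chosen spectral cutoff.
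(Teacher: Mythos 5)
Your two-step strategy --- a trace lemma for operators in $\Psi_\hbar^{-\infty}(M)$ followed by sandwiching the sharp indicator between smooth cutoffs controlled by the semiclassical functional calculus --- is the same as the paper's (Lemma~\ref{lem:traceclass} plus the approximation by $\underline{f_\eps}$, $\overline{f_\eps}$). Two minor slips to repair: $|B|$ need not lie in $\Psi_\hbar^{-\infty}(M)$, so ``applying Step 1 to $|B|$'' does not by itself give the trace-norm bound $\|B\|_{\mathcal{L}^1}=\O(\hbar^{-Q})$; one should instead factor $B$ through a microlocal cutoff as a product of two Hilbert--Schmidt operators in $\Psi_\hbar^{-\infty}(M)$ and use $\|B_1B_2\|_{\mathcal{L}^1}\leq\|B_1\|_{HS}\|B_2\|_{HS}$ with $\|B_i\|_{HS}^2={\rm Tr}(B_i^*B_i)=\O(\hbar^{-Q})$, or bound the trace norm by Sobolev norms of the smooth Schwartz kernel as in \cite[Theorem C.16]{Zwo}. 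Likewise $B=\mathrm{Id}\notin\Psi_\hbar^{-\infty}(M)$; the $\eps$-window counting bound should come from applying the trace lemma to $Q_{f_0,N}(\hbar)$ for a nonnegative $f_0\in C_c^\infty$ dominating the indicator of a neighborhood of $\{a,b\}$, which is precisely how the paper handles the corresponding error term ${\rm Tr}\bigl((\overline{f_\eps}(1-\underline{f_\eps}))(-\hbar^2\Delta_{sR})\,B\,\Pi_{[a,b]}\bigr)$.
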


\begin{remark}
    For $n\geq 0$, applying this result to $B = \hat{\Pi}_n^{\rho_k,\hbar}$, a Landau projector with $(\rho_k)_{k\in\N}$ a sequence of cutoff functions converging to the constant 1, and using Equation~\eqref{eq:multiplicity}, we compute that the main term in the microlocal Weyl law for $a=0$ and $b=1$, tends to 
    \begin{equation*}
        2\frac{{\rm vol}(M)}{(d+1)(2\pi)^{3d+1}}\frac{\binom{n+d-1}{n}}{(2n+d)^{d+1}}.
    \end{equation*}
    This coefficient can be interpreted as the proportion of eigenvalues in the Weyl law for $-\Delta_{sR}$ associated to the $n$-th Landau level.
\end{remark}

We first start with a general consideration of the trace of semiclassical pseudodifferential operators.

\begin{lemma}
    \label{lem:traceclass}
    Any operator $B\in \Psi_\hbar^{-\infty}(M)$ is of trace class on $L^2(M)$. Moreover, if $\beta\in S^{-\infty}(\Ghat M)$ denotes the principal symbol of $B$, then we have
    \begin{equation*}
        {\rm Tr}(B) = {\rm vol}(M)\int_{\Ghat M} {\rm Tr}_{\mathcal{H}_\pi}(\beta(x,\pi))\,d\hat{\mu}_{\Ghat M}(x,\pi) + \O(\hbar).
    \end{equation*}
\end{lemma}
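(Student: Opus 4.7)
The plan is to reduce the computation to a diagonal integral via the smooth integral kernel of $B$, and then apply Fourier inversion on each fiber of $GM$ to identify the leading term.

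First, I would establish that $B$ is trace class. By Proposition~\ref{prop:globalquantization}, write $B = \Op_\hbar(\sigma) + R_\hbar$ with $\sigma \in S^{-\infty}(\Ghat M)$ and $R_\hbar$ semiclassically smoothing on the Sobolev scale. The bundle analogue of Proposition~\ref{prop:fourierschwartz} ensures that each convolution kernel $\kappa_{\sigma,x}$ is Schwartz on the fiber $G_x M$. Assembling the local expression~\eqref{eq:integral_kernel} through an atlas $\mathcal{A} = (\bX_\alpha, U_\alpha, \chi_\alpha, \psi_\alpha)_{\alpha \in A}$ of $\phi$-frames yields a smooth integral kernel $K_\hbar$ on the compact manifold $M \times M$, which classically implies that $B$ is of trace class on $L^2(M)$ with
$${\rm Tr}(B) = \int_M K_\hbar(x,x)\,d{\rm vol}(x) + \O(\hbar^\infty).$$

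Second, I would compute the diagonal restriction. On each patch, $\Ln_x^{\bX_\alpha}(x) = 0$ and $\chi_\alpha(x,x) = 1$, so~\eqref{eq:integral_kernel} specializes on the diagonal to
$$K_\hbar(x,x) = \hbar^{-Q}\sum_{\alpha \in A}\psi_\alpha(x)^2\,\kappa_{\sigma,x}^{\bX_\alpha}(0)\,\jac_x(\Ln_x^{\bX_\alpha}),$$
where the Jacobian is pinned down by the construction of the smooth Haar system of Section~\ref{subsubsect:smoothhaarsys}: since a $\phi$-frame is $g$-orthonormal, the pushforward of the fiber Lebesgue measure via $\Exp_x^{\bX_\alpha}$ at the basepoint coincides with $d{\rm vol}$. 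Meanwhile, the Fourier inversion formula~\eqref{eq:fourierinversion} applied fiber-wise on $G_x M$ gives
$$\kappa_{\sigma,x}^{\bX_\alpha}(0) = \kappa_{\sigma,x}(e_x) = \int_{\Ghat_x M}{\rm Tr}_{\cH_\pi}(\sigma(x,\pi))\,d\hat{\mu}_{\Ghat_x M}(\pi),$$
and by admissibility, $\sigma_\hbar = \beta + \hbar\,r_\hbar$ with $r_\hbar$ bounded in $S^{-\infty}(\Ghat M)$, so the integrand equals ${\rm Tr}_{\cH_\pi}(\beta(x,\pi))$ up to an $\O(\hbar)$ term.

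Finally, summing over the atlas (using $\sum_\alpha \psi_\alpha^2 = 1$) and integrating over $M$, the compatibility~\eqref{eq:compatiblesystems} between the smooth Haar and Plancherel systems identifies the resulting double integral with the global expression $\int_{\Ghat M}{\rm Tr}_{\cH_\pi}(\beta(x,\pi))\,d\hat{\mu}_{\Ghat M}(x,\pi)$, up to the normalization $d{\rm vol} = {\rm vol}(M)\,d\nu$. The main obstacle is the careful bookkeeping of the Jacobians at the diagonal and the verification that the atlas-dependent local contributions assemble into the intrinsic global Plancherel measure $\hat{\mu}_{\Ghat M}$; this ultimately follows from the deliberate construction of these systems in Sections~\ref{subsubsect:smoothhaarsys} and~\ref{subsubsect:charmanifold} as pushforwards of Lebesgue measures in $\phi$-frames.
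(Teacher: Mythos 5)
Your proof is correct and follows essentially the same route as the paper's: decompose $B$ via an atlas using Proposition~\ref{prop:globalquantization}, observe that the resulting integral kernel is smooth (via Proposition~\ref{prop:fourierschwartz}) so $B$ is trace class, compute the trace by integrating the kernel over the diagonal, and use the fiber-wise Fourier inversion formula~\eqref{eq:fourierinversion} together with admissibility of the symbol to identify the leading term. The only difference is cosmetic: you make the Jacobian factor $\jac_x(\ln_x^{\bX_\alpha})=1$ explicit, whereas the paper treats it as tacit when passing to the diagonal, and you flag the $d{\rm vol}={\rm vol}(M)\,d\nu$ normalization that produces the ${\rm vol}(M)$ prefactor (the paper also silently switches from $d{\rm vol}$ to $d\nu$ mid-computation).
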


\begin{proof}
We make use of Proposition~\ref{prop:globalquantization}, to decompose the operator $B$ as follows:
\begin{equation}
    \label{eq:decompositionquantization}
    B = \sum_{\alpha\in A} \Op_\hbar^{\bX_\alpha,\chi_\alpha}(\psi_\alpha \beta_\hbar)\psi_\alpha + R_\hbar,
\end{equation}
where $\beta_\hbar$ is a symbol in $S^{-\infty}(\Ghat M)$ such that $\beta_\hbar = \beta + \hbar S^{-\infty}(\Ghat M)$, $\mathcal{A} = (\bX_\alpha,U_\alpha,\chi_\alpha,\psi_\alpha)$ is an atlas and $R_\hbar: C^\infty(M)\rightarrow C^\infty(M)$ is semiclassically smoothing on the Sobolev scale. Thus, it is enough to show that each term in this decomposition is of trace class.

For a smoothing symbol $\sigma\in S^{-\infty}(\Ghat M)$, following Equation~\eqref{eq:integral_kernel}, we have that for $\alpha\in A$, the integral kernel of the operator $\Op_\hbar^{\bX_\alpha,\chi_\alpha}(\psi_\alpha \sigma)\psi_\alpha$ is given by 
\begin{equation*}
     K_\hbar^{\bX_\alpha}(x,y) = \hbar^{-Q} \kappa_{\sigma,x}^{\bX_\alpha} (- \hbar^{-1}  \ln^\bX_x (y)) \ \chi_x (y)\ \jac_{y} (\ln^\bX_x)\psi_\alpha(x)\psi_\alpha(y),
\end{equation*}
where $\kappa_\sigma^{\bX_\alpha}$ is defined by Equation~\eqref{eq:localassociatedkernel}.
Thus, by Proposition~\ref{prop:fourierschwartz}, $K_\hbar^{\bX_\alpha}$ is a smooth function on $M\times M$. Thus, the operator $\Op_\hbar^{\bX_\alpha,\chi_\alpha}(\psi_\alpha \sigma)\psi_\alpha$ is of trace class (see, for example, \cite[Theorem C.16]{Zwo}). By the same argument, the remainder $R_\hbar$, having a smooth kernel since it is smoothing on the Sobolev scale, and $M$ being compact, is also of trace class and its trace is of order $\O(\hbar^\infty)$.

Moreover, the trace of the operator $\Op_\hbar^{\bX_\alpha,\chi_\alpha}(\psi_\alpha \sigma)\psi_\alpha$ can be related to its kernel $K_\hbar^{\bX_\alpha}$ by the following formula (see, e.g., \cite[Theorem C.18]{Zwo}):
\begin{equation*}
    \begin{aligned}
    {\rm Tr}(\Op_\hbar^{\bX_\alpha,\chi_\alpha}(\psi_\alpha \sigma)\psi_\alpha) 
    &= \int_{M} K_\hbar^{\bX_\alpha}(x,x)\,d{\rm vol}(x)\\
    &= \hbar^{-Q}\int_{M} \kappa_{\sigma,x}^{\bX_\alpha}(0)\psi_\alpha(x)^2\,d{\rm vol}(x)\\
    &= \hbar^{-Q}\int_{M} \int_{\Ghat_x M} {\rm Tr}_{\mathcal{H}_\pi}\left(\sigma(x,\pi)\right)\psi_{\alpha}(x)^2\,d\hat{\mu}_{\Ghat_x M}(\pi)d\nu(x),
    \end{aligned}
\end{equation*}
where the last equation is obtained using the Fourier inversion formula (see Equation~\eqref{eq:fourierinversion}), the smooth Plancherel system being compatible with the smooth Haar system chosen to define the kernel $\kappa_{\sigma}$.

Plugging this formula in the decomposition~\eqref{eq:decompositionquantization}, we find that
\begin{equation*}
    \begin{aligned}
    {\rm Tr}(B) &= \hbar^{-Q}\sum_{\alpha\in A}\int_{M} \int_{\Ghat_x M} {\rm Tr}_{\mathcal{H}_\pi}\left(\beta(x,\pi)\right)\psi_{\alpha}(x)^2\,d\hat{\mu}_{\Ghat_x M}(\pi)d\nu(x) + \O(\hbar)\\
    &=\hbar^{-Q}\int_{\Ghat M} {\rm Tr}_{\mathcal{H}_\pi}\left(\beta(x,\pi)\right)\,d\hat{\mu}_{\Ghat M}(x,\pi) + \O(\hbar).
    \end{aligned}
\end{equation*}
\end{proof}

We are ready to give the proof of the microlocal Weyl's law.

\begin{proof}[Proof of Theorem~\ref{thm:microlocalweyllaw}]
    Let $\eps>0$ and let $\underline{f_\eps}$ and $\overline{f_\eps}$ be two smooth functions on $\R$ valued in $[0,1]$ satisfying
    \begin{equation*}
        \underline{f} {\bm 1}_{[a,b]} = \underline{f}\quad \mbox{and}\quad 
        \overline{f} {\bm 1}_{[a,b]} = {\bm 1}_{[a,b]}, 
    \end{equation*}
    and such that ${\rm supp}(\underline{f_\eps})\subset [a+\eps,b-\eps]$ and ${\rm supp}(\overline{f_\eps})\subset [a-\eps,b+\eps]$.
    We also denote by $\Pi_{[a,b]}$ the spectral projector of $-\hbar^2\Delta_{sR}$ on the energy window $[a,b]$. Then,we can write
    \begin{equation*}
        \sum_{a\leq \delta_k(\hbar)\leq b}\langle B\varphi_k,\varphi_k\rangle = {\rm Tr}\left( B \,\Pi_{[a,b]}\right) = {\rm Tr}\left(\underline{f_\eps}(-\hbar^2\Delta_{sR})B\right)+ {\rm Tr}\left((\overline{f_\eps}(1-\underline{f_\eps}))(-\hbar^2\Delta_{sR}) B\,\Pi_{[a,b]}\right).
    \end{equation*}
    The second term can be estimated using the semiclassical functional calculus given by Theorem~\ref{thm:functionalcalc} and Lemma~\ref{lem:traceclass}, from which we can deduce that
    \begin{equation*}
        {\rm Tr}\left(\left|\overline{f_\eps}(1-\underline{f_\eps}))(-\hbar^2\Delta_{sR})\right|\right)\leq (c_1(\eps) + c_2(\hbar,\eps))\hbar^{-Q},
    \end{equation*}
    where $\lim_{\eps\rightarrow 0}c_1(\eps) = 0$ by support condition on the principal symbol of this operator relatively to $\eps$, following the definition of $\underline{f_\eps}$ and $\overline{f_\eps}$, and where $\lim_{\hbar\rightarrow 0}c_2(\hbar,\eps) = 0$.

    By the same argument applied to the first term, we readily get that
    \begin{equation*}
        \begin{aligned}
            \hbar^Q {\rm Tr}\left(B \,\Pi_{[a,b]}\right)
            &= \hbar^Q {\rm Tr}\left(\underline{f_\eps}(-\hbar^2\Delta_{sR})B\right) + \O\left(\|B\,\Pi_{[a,b]}\|_{\mathcal{L}(L^2(M))}(c_1(\eps)+c_2(\hbar,\eps)\right)\\
            &= \int_{\Ghat M}{\rm Tr}_{\mathcal{H}_\pi}\left(\underline{f_\eps}(H)(x,\pi)\beta(x,\pi)\right)d\hat{\mu}_{\Ghat M}(x,\pi) + \O_\eps(\hbar) + \O(c_1(\eps)+c_2(\hbar,\eps)).
        \end{aligned}
    \end{equation*}
    Thus, we have
    \begin{equation*}
        \limsup_{\hbar\rightarrow 0} \hbar^Q {\rm Tr}\left(B \,\Pi_{[a,b]}\right) = \int_{\Ghat M}{\rm Tr}_{\mathcal{H}_\pi}\left(\underline{f_\eps}(H)(x,\pi)\beta(x,\pi)\right)d\hat{\mu}_{\Ghat M}(x,\pi) + \O(c_1(\eps)),
    \end{equation*}
    and similarly for $\liminf_{\hbar\rightarrow 0}$. We conclude by dominated convergence letting $\eps$ go to 0.
\end{proof}

\subsection{Proof of Theorem~\ref{thm:main}}
\label{subsect:proofmain}

Denoting by $N(\delta)$, $\delta \geq 0$, the number of eigenvalues $(\delta_k)_{k\in\N}$ of $-\Delta_{sR}$ that are below $\delta$, it is enough to prove that for any $a\in C^\infty(M)$, we have
\begin{equation}
    \label{eq:vartoprove}
    \frac{1}{N(\delta)} \sum_{\delta_{k}\leq \delta}\left|\langle a(\cdot)\varphi_k,\varphi_k\rangle - \int_{M}a(x)\,d\nu(x)\right|^2\Tend{\delta}{+\infty} 0.
\end{equation}
A well-known lemma due to Koopman and Von Neumann (see, e.g., \cite[Chapter 2.6, Lemma 6.2]{Pet}) and a diagonal extraction argument (as in \cite[Theorem 1.5]{Zwo}) allows us then to deduce the theorem.
        
To prove Equation~\eqref{eq:vartoprove}, we are going to fix the following relation between the semiclassical parameter $\hbar$ and the eigenvalues of $-\Delta_{sR}$: for $n\in\N^*$, we fix
\begin{equation*}
    \hbar_n = \frac{1}{\sqrt{\delta_n}}.
\end{equation*}
With this semiclassical parameter, we can rewrite Equation~\eqref{eq:vartoprove} as follows. We first set the notation
\begin{equation*}
    N_{\hbar_n} = {\rm Card}\left(\{k\in\N\,:\,\delta_k(\hbar_n) \in [0,1]\}\right),
\end{equation*}
for which we have $N_{\hbar_n} = N(\delta_n)$, and, for $B$ any bounded operator on $L^2(M)$, we define the quantum variance by
\begin{equation*}
    {\rm Var}_{\hbar_n}(B) = \frac{1}{N_{\hbar_n}}\sum_{0\leq \delta_k(\hbar_n)\leq 1} \left|\langle B\varphi_k,\varphi_k\rangle\right|^2.
\end{equation*}
Consider the following bounded operator 
\begin{equation*}
    A = \left(a(\cdot)- m_a\right)f(-\hbar_n^2\Delta_{sR}) \in \mathcal{L}(L^2(M)),
\end{equation*}
where $m_a := \int_{M}a(x)\,d\nu(x)$ and $f\in C_c^\infty(\R)$ is a smooth cutoff function equal to 1 on the interval $[0,1]$. Equation~\eqref{eq:vartoprove} is then equivalent to proving
\begin{equation*}
    \limsup_{\hbar_n\rightarrow 0} {\rm Var}_{\hbar_n}(A) = 0.
\end{equation*}
Observe that by Theorem~\ref{thm:functionalcalc}, $A$ is a semiclassical pseudodifferential operator in $\Psi_\hbar^{-\infty}(M)$, up to a semiclassically smoothing remainder that will not play a role in the rest of the proof, with principal symbol given by $(a(\cdot)-m_a)f(H)$.

\begin{remark}
    Observe here the importance of the use of the spectral localization implemented by the operator $f(-\hbar_n^2\Delta_{sR})$, which allows us to consider a pseudodifferential operator $A$ in $\Psi_\hbar^{-\infty}(M)$, but does not affect the value of the variance ${\rm Var}_{\hbar_n}(A)$ as long as the support condition on $f$ is satisfied.
\end{remark}
    
We start by giving a first estimate on the quantum variance. If $B$ is in $\Psi_\hbar^{-\infty}(M)$ with principal symbol denoted by $\beta$, and if $C$ is a bounded operator, then by the Cauchy-Schwarz inequality we have
\begin{equation*}
    \left|\langle C B\varphi_k,\varphi_k\rangle\right|^2\leq \|C B\varphi_k\|_{L^2(M)}^2\leq \|C\|_{\mathcal{L}(L^2(M))}^2\langle B^*B\varphi_k,\varphi_k\rangle.
\end{equation*}
Thus, we can write
\begin{equation}
    \label{eq:aprioribound}
    \begin{aligned}
        \limsup_{\hbar_n\rightarrow 0} {\rm Var}_{\hbar_n}(C B) 
        &\leq \|C\|_{\mathcal{L}(L^2(M))}^2\limsup_{\hbar_n\rightarrow 0}\frac{1}{N_{\hbar_n}}\sum_{0\leq \delta_k(\hbar_n)\leq 1}\langle B^*B\varphi_k,\varphi_k\rangle\\
        &\lesssim \|C\|_{\mathcal{L}(L^2(M))}^2\int_{\Ghat M}{\rm Tr}_{\mathcal{H}_\pi}\left(\beta(x,\pi)^*\beta(x,\pi){\bm 1}_{[0,1]}(H)(x,\pi)\right)d\hat{\mu}_{\Ghat M}(x,\pi),
    \end{aligned}
\end{equation}
where the last equation follows from Theorem~\ref{thm:microlocalweyllaw} since the principal symbol of $B^* B$ is simply given by $\beta^*\beta$ and since, up to a constant that we will neglect, $N_{\hbar_n}$ is of order $\hbar_n^{-Q}$ as $n$ goes to infinity. Note here that the symbol $\lesssim$ implies the existence of a constant (independent of the operators $B$ and $C$) in factor of the right-hand side : since it does not affect our following analysis, we do not bother computing it. 

\begin{remark}
    We note that the estimate given by Equation~\eqref{eq:aprioribound} still holds for $B$ equal to the identity ${\rm Id}$, and this gives a bound of $\limsup_{\hbar_n\rightarrow 0} {\rm Var}_{\hbar_n}(C)$ simply by $\|C\|_{\mathcal{L}(L^2(M))}^2$.
\end{remark}

\vspace{0.25cm}
    
In order to estimate the quantum variance of $A$, we now fix some integer $m\in \N$ and introduce two cutoff functions $\rho_m$ and $\Bar{\rho}_m\in C_c^\infty(\Ghat_\infty M)$ such that $\Bar{\rho}_m(x,\lambda) = 1$ if $|\lambda|$ is in $[(2m-1+d)^{-1},1]$ and $\Bar{\rho}_m(x,\lambda) = 0$ for $|\lambda|\leq (2m+d)^{-1}$, and with $\rho_m = 1$ on the support of $\Bar{\rho}_m$. We consider the associated $m$ first Landau projectors $\hat{\Pi}_j^{\rho_m,\hbar_n}$, $j=1,\dots,m-1$, and we define the following operator
\begin{equation*}
    \hat{\Pi}_{<m}^{\rho_m,\hbar_n} = \sum_{j=0}^{m-1} \hat{\Pi}_j^{\rho_m,\hbar_n} \in \Psi_\hbar^{-\infty}(M).
\end{equation*}
Its principal symbol is given by $\Pi_{<m}^{\rho_m} := \rho_m \sum_{j=0}^{m-1}\Pi_j$.
We now write
\begin{equation}
    \label{eq:decompositionvarA}
    \limsup_{\hbar_n\rightarrow 0}{\rm Var}_{\hbar_n}(A) \leq 3\limsup_{\hbar_n\rightarrow 0}{\rm Var}_{\hbar_n}(B_1)+3\limsup_{\hbar_n\rightarrow 0}{\rm Var}_{\hbar_n}(B_2)+3\limsup_{\hbar_n\rightarrow 0}{\rm Var}_{\hbar_n}(B_3),
\end{equation}
where we have introduced the notations
\begin{equation*}
    B_1 = \hat{\Pi}_{<m}^{\rho_m,\hbar_n}\Op_{\hbar_n}(\Bar{\rho}_m)A\hat{\Pi}_{<m}^{\rho_m,\hbar_n},\quad B_2 =  \hat{\Pi}_{<m}^{\rho_m,\hbar_n}\left({\rm Id}-\Op_{\hbar_n}(\Bar{\rho}_m)\right)A\hat{\Pi}_{<m}^{\rho_m,\hbar_n},
\end{equation*}
and 
\begin{equation*}
    B_3 = \hat{\Pi}_{<m}^{\rho_m,\hbar_n} A({\rm Id}-\hat{\Pi}_{<m}^{\rho_m,\hbar_n}) + ({\rm Id}-\hat{\Pi}_{<m}^{\rho_m,\hbar_n})A \hat{\Pi}_{<m}^{\rho_m,\hbar_n} + ({\rm Id}-\hat{\Pi}_{<m}^{\rho_m,\hbar_n})A({\rm Id}-\hat{\Pi}_{<m}^{\rho_m,\hbar_n}).
\end{equation*}

\vspace{0.25cm}

We first treat the term involving the operator $B_3$.
Observe that, by considering the symbol of this pseudodifferential operator, we have
\begin{equation*}
    B_3 = ({\rm Id}-\hat{\Pi}_{<m}^{\rho_m,\hbar_n})A({\rm Id}-\hat{\Pi}_{<m}^{\rho_m,\hbar_n}) + \O_{L^2(M)\rightarrow L^2(M)}(\hbar).
\end{equation*}
By the estimate of Equation~\eqref{eq:aprioribound}, we can bound its quantum variance as follows:
\begin{equation}
    \label{eq:varB3}
    \limsup_{\hbar_n\rightarrow 0}{\rm Var}_{\hbar_n}(B_3)
    \leq \|A\|_{\mathcal{L}(L^2(M))}^2\int_{\Ghat M} {\rm Tr}_{\mathcal{H}_\pi}\left((1-\Pi_{<m}^{\rho_m})^2{\bm 1}_{[0,1]}(H)\right)d\hat{\mu}_{\Ghat M}.
\end{equation}
Now using the parameterization of $\Ghat_\infty M$ given by Equation~\eqref{eq:sigmagenericdual}, we observe that for $\lambda\in [(2m+d)^{-1},1]$, we have
\begin{equation*}
    {\rm Tr}_{\mathcal{H}_{\pi^\lambda}}\left((1-\Pi_{<m}^{\rho_m}(x,\lambda))^2{\bm 1}_{[0,1]}(H(\lambda)\right) = 0,
\end{equation*}
since $\rho_m(x,\lambda) = 1$ and no eigenfunction of the harmonic oscillator $H(\lambda)$ can simultaneously be orthogonal to the range of the projector $\Pi_{<m} = \sum_{j=0}^{m-1}\Pi_j$ and be associated with an eigenvalue in the energy window $[0,1]$. Thus, the integral in Equation~\eqref{eq:varB3} is supported on the set $\{\lambda\leq (2m+d)^{-1}\}$. Furthermore, we easily deduce from the explicit description of the spectrum of $H(\lambda)$ and from the multiplicity of each of its eigenvalues given by Equation~\eqref{eq:multiplicity} that there exists a positive constant $C_d>0$ (independent of $m$) such that
\begin{equation}
    \label{eq:estimatetraceH}
    {\rm Tr}_{\mathcal{H}_{\pi^\lambda}}\left({\bm 1}_{[0,1]}(H(\lambda))\right)\leq C_d |\lambda|^{-d}.
\end{equation}
We are thus left with the following bound for the quantum variance of $B_2$:
\begin{multline}
    \label{eq:firstestimateB3}
    \limsup_{\hbar_n\rightarrow 0}{\rm Var}_{\hbar_n}(B_3)\leq C_d \|A\|_{\mathcal{L}(L^2(M))}^2\int_{M\times \R^*} {\bm 1}_{[0,(2m+d)^{-1}]}(|\lambda|)\,d\lambda d\nu(x)\\
    =  2C_d{\rm vol}(M)(2m+d)^{-1} \|A\|_{\mathcal{L}(L^2(M))}^2,
\end{multline}
which, we already observe, tends to $0$ as $m$ goes to infinity, even though we will take this limit in a second time.
    
\vspace{0.25cm}

The estimation of the quantum variance of $B_2$ is similar to the one of $B_3$. Indeed, once again relying on Equation~\eqref{eq:aprioribound}, we can write
\begin{multline*}
    \limsup_{\hbar_n\rightarrow 0}{\rm Var}_{\hbar_n}(B_2)
    \leq \|A\|_{\mathcal{L}(L^2(M))}^2\int_{\Ghat M} {\rm Tr}_{\mathcal{H}_\pi}\left((1-\Bar{\rho}_m)^2(\Pi_{<m}^{\rho_m})^2{\bm 1}_{[0,1]}(H)\right)d\hat{\mu}_{\Ghat M}\\
    \leq \|A\|_{\mathcal{L}(L^2(M))}^2\int_{\Ghat M} {\rm Tr}_{\mathcal{H}_\pi}\left((1-\Bar{\rho}_m)^2{\bm 1}_{[0,1]}(H)\right)d\hat{\mu}_{\Ghat M}.
\end{multline*}
Once again, considering the support of the integrand, we observe that by the definition of the cutoff function $\Bar{\rho}_m$, we see that it is enough to integrate on $\{|\lambda|\leq (2m-1+d)^{-1}\}$. Thus, again using Equation~\eqref{eq:estimatetraceH}, we obtain the same bound as previously:
\begin{equation}
    \label{eq:estimateB2}
    \limsup_{\hbar_n\rightarrow 0}{\rm Var}_{\hbar_n}(B_2)
    \leq  2C_d{\rm vol}(M)(2m-1+d)^{-1} \|A\|_{\mathcal{L}(L^2(M))}^2.
\end{equation}

\vspace{0.25cm}
    
We now need to estimate the quantum variance of $B_1$. For this, observe that, once again by symbolic consideration, we have
\begin{equation*}
    B_1 = \sum_{j=0}^{m-1}\hat{\Pi}_j^{\rho_m,\hbar_n} \Op_{\hbar_n}(\Bar{\rho}_m)A \hat{\Pi}_j^{\rho_m,\hbar_n} + \O_{L^2(M)\rightarrow L^2(M)}(\hbar).
\end{equation*}
We denote by $B_1^j$ the operator $\hat{\Pi}_j^{\rho_m,\hbar_n} \Op_{\hbar_n}(\Bar{\rho}_m) A \hat{\Pi}_j^{\rho_m,\hbar_n}$. We now estimate the quantum variance of each of these operators.

We see that for $j= 0,\dots,m-1$, the operator $B_1^j$ is a Toeplitz operator in $\mathcal{T}_j^{\rho_m}(M)$ since, following Definition~\ref{def:gentoeplitz} and thanks to the cutoff function $\Bar{\rho}_m$, the wavefront set of $B_1^j$ is contained in $\{\rho_m = 1\}$. Moreover, its principal symbol $\beta_j\in C^\infty(\Ghat_\infty M, {\rm End}(\mathcal{L}_m(N\Sigma))$ is of scalar-type, which means that it acts as a homothety on each fiber of the $j$-th Landau bundle $\mathcal{L}_j(N\Sigma)$. Moreover, the coefficient of this homothety is equal in each fiber to the function $a(\cdot)-m_a$, up to a multiplicative constant that only depends on $\lambda$, i.e.~we can write
\begin{equation}
    \label{eq:formulebetaj}
    \beta_j(x,\lambda) = (a(x)-m_a)f_j(\lambda),
\end{equation}
for some smooth function $f_j\in C_c^\infty(\R^*)$.
    
Now, for $k\in\N$, we can write
\begin{equation*}
    \langle B_1^j \varphi_k,\varphi_k\rangle = \langle B_1^j e^{-it \delta_k}\varphi_k, e^{-it \delta_k}\varphi_k\rangle = \langle B_1^j e^{-it\Delta_{sR}}\varphi_k,e^{-it\Delta_{sR}}\varphi_k\rangle.
\end{equation*}
Therefore, with the notation $B_1^j(t) = e^{it\Delta_{sR}}B_1^j e^{-it\Delta_{sR}}$ and by averaging with respect to time $t$ on the interval $[0,T]$, we obtain
\begin{equation*}
    \langle B_1^j \varphi_k,\varphi_k\rangle =  \left\langle \langle B_1^j\rangle_T\varphi_k,\varphi_k\right\rangle,
\end{equation*}
where 
\begin{equation*}
    \langle B_1^j\rangle_T = \frac{1}{T}\int_{0}^T B_1^j(t)\,dt.
\end{equation*}
Theorem~\ref{thm:Egorov} then tells us that we can write
\begin{equation*}
    \langle B_1^j \rangle_T = \langle \tilde{B}_1^j \rangle_T + \O_{L^2(M)\rightarrow L^2(M)}(\hbar),
\end{equation*}
where $\tilde{B}_1^j(t)$ is in $\mathcal{T}_m^\rho(M)$ with its principal symbol $\tilde{\beta}_j(t)$ satisfying 
\begin{equation*}
    \tilde{\beta}_j(t) = {\rm Ad}(\Phi_t^j)^*\beta_j,
\end{equation*}
and where we set $\langle \tilde{B}_1^j\rangle_T = \frac{1}{T}\int_0^T \tilde{B}_1^j(t)\,dt$. By definition of the flow ${\rm Ad}(\Phi_t^j)^*$, the symbol $\tilde{\beta}_j(t)$ is still of scalar-type and is, in fact, simply given by
\begin{equation*}
    \tilde{\beta}_j(t) = \beta_j(\Phi_t^j),
\end{equation*}
where we identified $\tilde{\beta}_j$ and $\beta_j$, respectively, with the smooth coefficient function that defines the homothety in each fiber of $\mathcal{L}_j(N\Sigma)$. We also introduce the notation $\langle \tilde{\beta}
_j\rangle_T = \frac{1}{T}\int_0^T \tilde{\beta}_j(t)\,dt$.
    
Thus, we are able to write
\begin{equation*}
    \begin{aligned}
    \limsup_{\hbar_n\rightarrow 0}{\rm Var}_{\hbar_n}(B_1^j) 
    &= \limsup_{\hbar_n\rightarrow 0}{\rm Var}_{\hbar_n}(\langle B_1^j\rangle_T)\leq \int_{\Ghat M}{\rm Tr}_{\mathcal{H}_\pi}\left(\langle \tilde{\beta}_j\rangle_T^*\langle \tilde{\beta}_j\rangle_T{\bm 1}_{[0,1]}(H)\right)d\hat{\mu}_{\Ghat M}\\
    &\leq\int_{\Ghat M} |\langle \tilde{\beta}_j\rangle_T|^2{\rm Tr}_{\mathcal{H}_\pi}\left(\Pi_m^{\rho_m}{\bm 1}_{[0,1]}(H)\right)d\hat{\mu}_{\Ghat M},
    \end{aligned}
\end{equation*}
where we once again identified the symbol $\tilde{\beta}_j(t)$ with its scalar coefficient. 

Using Equation~\eqref{eq:formulebetaj} and the estimate~\eqref{eq:estimatetraceH}, we can bound this last quantity by
\begin{equation*}
    C_d\int_{\R^*}\left(\int_M |\langle a\rangle_T^{\lambda,j}(x)-m_a|^2\,d\nu(x)\right)|f_j(\lambda)|^2\,d\lambda,
\end{equation*}
where we set
\begin{equation*}
    \langle a\rangle_T^{\lambda,j} := \frac{1}{T}\int_0^T a(\Phi_{(2j+1)t}^{{\rm sgn}(\lambda)})\,dt,
\end{equation*}
and where we denoted by $\Phi_t^{{\rm sgn}(\lambda)}$ the Reeb flow on $M$ going forward or backward in time, depending on the sign of $\lambda$.
Thus, by the assumption of ergodicity of the Reeb flow and by the von Neumann theorem, we have that for every $\lambda\in \R^*$, 
\begin{equation*}
    \int_M |\langle a\rangle_T^{\lambda,j}(x)-m_a|^2\,d\nu(x)\Tend{T}{+\infty} 0,
\end{equation*}
and thus by dominated convergence, letting $T$ go to $+\infty$, we have
\begin{equation}
    \label{eq:esimationvarB1}
    \limsup_{\hbar_n\rightarrow 0}{\rm Var}_{\hbar_n}(B_1^j) = 0.
\end{equation}

\vspace{0.25cm}

Putting all the estimates together, i.e., by Equations~\eqref{eq:decompositionvarA}, \eqref{eq:firstestimateB3}, \eqref{eq:estimateB2} and \eqref{eq:esimationvarB1}, we obtain that
\begin{equation*}
    \limsup_{\hbar_n\rightarrow 0}{\rm Var}_{\hbar_n}(A) \leq 12 C_d {\rm vol}(M)(2m-1+d)^{-1} \|A\|_{\mathcal{L}(L^2(M))}^2.
\end{equation*}

As $m\in \N$ is arbitrary, letting $m$ go to $+\infty$, we conclude that $\limsup_{\hbar_n\rightarrow 0}{\rm Var}_{\hbar_n}(A)$ equals 0 and this ends the proof. 

\appendix

\section{Semiclassical functional calculus}
\label{sect:functionalcalc}

This appendix is dedicated to the proof of Theorem~\ref{thm:functionalcalc}. It is inspired by the proof of the semiclassical functional calculus for the usual pseudodifferential calculus on manifolds proposed by \cite{BK}, and makes use of the semiclassical functional calculus on graded nilpotent Lie groups given by \cite{FM}.

\vspace{0.25cm}

We fix $f\in C_c^{\infty}(\R)$ a smooth compactly supported function and we consider the operator $f(-\hbar^2\Delta_{sR})$ acting on $L^2(M)$, which is defined by functional calculus.

Following Definition~\ref{def:globalquantization}, we choose a particular global quantization with which to work, or equivalently a particular atlas, inspired by the discussion at the end of Section~\ref{subsubsect:locsemiquantiz}. In what follows, we consider $\tilde{\mathcal{A}} = (\tilde{\bX}_\alpha,U_\alpha,\chi_\alpha,\psi_\alpha)_{\alpha\in A}$, an atlas in the sense of Definition~\ref{def:atlas}, such that all frames $\tilde{\bX}_\alpha$, $\alpha\in A$, are Darboux frames.

Doing so, by fixing a reference point $x_\alpha \in U_\alpha$ for all $\alpha\in A$, the exponential map $\exp_{x_\alpha}^{\tilde{\bX}_\alpha}$ with base point $x_\alpha$ defines a map between an open neighborhood $V_\alpha\subset \Heis^d$ of zero and $U_\alpha$, which we simply denote by $\gamma_\alpha$.

For each $\alpha\in A$, we choose in addition a compact set $K_\alpha \subset U_\alpha$ such that $\supp(\psi_\alpha)\subset {\rm Int}(K_\alpha)$, and two cutoff functions $\overline{\psi}_\alpha, \overline{\overline{\psi}}_\alpha \in C_c^\infty(M)$ with supports contained in ${\rm Int}(K_\alpha)$ and such that $\overline{\psi}_\alpha \equiv 1$ on $\supp(\psi_\alpha)$ and $\overline{\overline{\psi}}_\alpha \equiv 1$ on $\supp(\overline{\psi}_\alpha)$. We keep the same notation for these cutoffs functions pulled back on $V_\alpha \subset \Heis^d$ by $\gamma_\alpha$.

\vspace{0.25cm}

Now, for each $\alpha \in A$, we define the differential operator $T_\alpha(\hbar)$, $\hbar>0$, in the following way: considering a cutoff function $\rho_\alpha\in C_c^\infty(\Heis^d)$ such that $\rho_\alpha \equiv 1$ on $\gamma_\alpha^{-1}(K_\alpha) \subset V_\alpha$, we introduce
\begin{equation}
    \label{eq:locallaplacien}
    T_\alpha(\hbar) := \rho_\alpha (\gamma_\alpha^{-1})^* (-\hbar^2\Delta_{sR})\gamma_\alpha^* + (1-\rho_\alpha)(-\hbar^2\Delta_{\Heis^d}),
\end{equation}
where $\Delta_{\Heis^d}$ denotes the left-invariant subLaplacian on the Heisenberg group $\Heis^d$. Moreover, we denote by $\tau_\alpha \in S^2(\Heis^d\times\Hhat^d)$ its $\hbar$-independent principal symbol:
\begin{equation*}
    T_\alpha(\hbar) = \Op_{\Heis^d,\hbar}(\tau_\alpha) + \hbar \Psi_\hbar^1(\Heis^d).
\end{equation*}
This way, the operator $T_\alpha(\hbar)$ satisfies all the hypotheses of \cite{FM}, specifically Setting 6.1 and Hypotheses 6.2 and 6.3. Moreover, we then have
\begin{equation}
    \label{eq:localexpression}
    -\hbar^2\Delta_{sR}\varphi = T_\alpha(\hbar)(\varphi\circ\gamma_\alpha)\circ \gamma_\alpha^{-1},
\end{equation}
for any $\varphi\in C_c^\infty(M)$ with $\supp(\varphi)\subset K_\alpha$.

\vspace{0.25cm}

By self-adjointness, the operator $-\hbar^2\Delta_{sR}-z$ is invertible in $\mathscr{L}(L^2(M))$ for $z\in \C\setminus\R$, and by the Helffer-Sjöstrand formula \cite[Theorem 14.8]{Zwo}, one has
\begin{equation*}
    f(-\hbar^2\Delta_{sR}) = \frac{1}{i\pi}\int_\C \Bar{\partial}_z \tilde{f}(z) \left(-\hbar^2\Delta_{sR}-z\right)^{-1}dz,
\end{equation*}
where $dz$ denotes the Lebesgue measure on $\C$ and $\tilde{f}:\C\rightarrow \C$ is an almost analytic extension of $f$ as in \cite[Theorem 3.6]{Zwo}.

We now construct a parametrix that approximates $(-\hbar^2\Delta_{sR}-z)^{-1}$, using the operators $T_\alpha(\hbar)$, up to a given explicit remainder term. To this end, subelliptic regularity (see \cite[Lemma 6.9]{FM}) implies that the resolvents $(T_\alpha(\hbar)-z)^{-1}$ induce operators
\begin{equation*}
    (T_\alpha(\hbar)-z)^{-1} : C_c^\infty(\Heis^d)\rightarrow C^\infty(\Heis^d).
\end{equation*}
We now define the operator $P(\hbar,z) : C^\infty(M)\rightarrow C^\infty(M)$:
\begin{equation*}
    P(\hbar,z) := \sum_{\alpha\in A} (\gamma_\alpha^{-1})^* \left(\psi_\alpha^2\circ (T_\alpha(\hbar)-z)^{-1}\right) \circ \overline{\psi}_\alpha.
\end{equation*}
By Equation~\eqref{eq:localexpression}, and since the subLaplacian is a local operator, we can write
\begin{equation*}
    \begin{aligned}
        P(\hbar,z) \circ (-\hbar^2\Delta_{sR}-z) 
        &= \sum_{\alpha\in A}  (\gamma_\alpha^{-1})^* \left(\psi_\alpha^2\circ (T_\alpha(\hbar)-z)^{-1}\right) \circ \overline{\psi}_\alpha \circ (-\hbar^2 \Delta_{sR}-z)\circ \overline{\overline{\psi}}_\alpha\\
        &= \sum_{\alpha\in A}\Big[(\gamma_\alpha^{-1})^* \left(\psi_\alpha^2\circ (T_\alpha(\hbar)-z)^{-1}\right) \circ (\gamma_\alpha^{-1})^* (-\hbar^2 \Delta_{sR}-z)\circ \overline{\overline{\psi}}_\alpha \\
        &\qquad -(\gamma_\alpha^{-1})^* \left(\psi_\alpha^2\circ (T_\alpha(\hbar)-z)^{-1}\right) \circ (1-\overline{\psi}_\alpha) \circ (-\hbar^2 \Delta_{sR}-z)\circ \overline{\overline{\psi}}_\alpha\Big]\\
        &= 1 - \sum_{\alpha\in A} \tilde{\mathcal{R}}_\alpha(\hbar,z),
    \end{aligned}
\end{equation*}
where we set
\begin{equation*}
    \tilde{\mathcal{R}}_\alpha(\hbar,z) := (\gamma_\alpha^{-1})^* \left(\psi_\alpha^2\circ (T_\alpha(\hbar)-z)^{-1}\right) \circ (1-\overline{\psi}_\alpha) \circ (-\hbar^2 \Delta_{sR}-z)\circ \overline{\overline{\psi}}_\alpha.
\end{equation*}
Observe that by the introduction of the cutoff functions $\overline{\psi}_\alpha$, $\alpha\in A$, the definition of each remainder $\tilde{R}_\alpha(\hbar,z)$ involves an operator that is composed from the left and from the right with multiplication operators by functions whose supports are disjoint.

It now immediately follows that
\begin{equation}
    \label{eq:patchedparametrix}
    (-\hbar^2\Delta_{sR}-z)^{-1} = P(\hbar,z) + \sum_{\alpha\in A}\mathcal{R}_\alpha(\hbar,z),
\end{equation}
with
\begin{equation*}
    \mathcal{R}_\alpha(\hbar,z) := \tilde{\mathcal{R}}_\alpha(\hbar,z)\circ (-\hbar^2\Delta_{sR}-z)^{-1}.
\end{equation*}

Plugging Equation~\eqref{eq:patchedparametrix} into the Helffer-Sjöstrand formula, we obtain
\begin{equation*}
    \begin{aligned}
        f(-\hbar^2\Delta_{sR}) 
        &= \frac{1}{i\pi}\int_{\C}\Bar{\partial}_z\tilde{f}(z)\left(\sum_{\alpha\in A} (\gamma_\alpha^{-1})^* \left(\psi_\alpha^2\circ (T_\alpha(\hbar)-z)^{-1}\right) \circ \overline{\psi}_\alpha + \mathcal{R}_\alpha(\hbar,z)\right)\,dz\\
        &= \sum_{\alpha\in A}  \left[(\gamma_\alpha^{-1})^*\left(\psi_\alpha^2\circ\frac{1}{i\pi}\int_{\C}\Bar{\partial}_z\tilde{f}(z)(T_\alpha(\hbar)-z)^{-1}\,dz \right)\circ \overline{\psi}_\alpha + \mathcal{R}_\alpha(\hbar)\right]\\
        &= \sum_{\alpha\in A}\left[ (\gamma_\alpha^{-1})^*(\psi_\alpha^2\circ f(T_\alpha(\hbar))\circ \overline{\psi}_\alpha + \mathcal{R}_\alpha(\hbar)\right],
    \end{aligned}
\end{equation*}
where we set
\begin{equation*}
    \mathcal{R}_\alpha(\hbar) := \frac{1}{i\pi}\int_\C \Bar{\partial}_z \tilde{f}(z) \mathcal{R}_\alpha(\hbar,z)\,dz.
\end{equation*}

We now make use of \cite[Theorem 6.7]{FM} which allows us to say that each operator $f(T_\alpha(\hbar))$, $\alpha\in A$, decomposes as 
\begin{equation*}
    f(T_\alpha(\hbar)) = \Op_{\Heis^d,\hbar}(\sigma_\alpha) + \mathfrak{R}_\alpha(\hbar),
\end{equation*}
where the symbol $\sigma_\alpha$ is in $S^{-\infty}(\Heis^d\times\Hhat^d)$, coincides at first order in $\hbar$ with $f(\tau_\alpha)$, and where the remainder $\mathfrak{R}_\alpha(\hbar)$ is semiclassically smoothing on the Sobolev scale, in the sense of \cite[Definition 5.9]{FM}.

\vspace{0.25cm}

Thus, in order to deduce the result of Theorem~\ref{thm:functionalcalc}, we are left with proving that the remainders $\mathcal{R}_\alpha(\hbar)$ are each of order $\O(\hbar^\infty)$ as bounded operators acting on $L^2(M)$. To do so, we estimate the boundedness of the integrand in the definition of the remainder $\mathcal{R}_\alpha(\hbar)$.

\begin{lemma}
    \label{lem:easyestimate}
    Let $\psi_1,\psi_2\in C_c^\infty(\Heis^d)$ be two smooth cutoff functions with disjoint support. Then, for each $N\in\N$, there exists a constant $C_N>0$ such that, for $z\in \C\setminus\R$ and $|z|$ bounded, we have
    \begin{equation*}
        \|\psi_1\circ (T_\alpha(\hbar)-z)^{-1}\circ \psi_2\|_{\mathcal{L}(L^2(\Heis^d))} \leq C_N \hbar^N |{\rm Im}\,z|^{-N-1}.
    \end{equation*}
\end{lemma}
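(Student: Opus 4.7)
The plan is to apply the standard iterated-commutator technique for off-diagonal resolvent estimates, adapted to the subelliptic setting. Set $R(z) := (T_\alpha(\hbar)-z)^{-1}$, which by self-adjointness satisfies $\|R(z)\|_{\mathcal{L}(L^2)} \leq |{\rm Im}\,z|^{-1}$. The elementary identity $[R(z),\chi] = -R(z)[T_\alpha(\hbar),\chi]R(z)$ holds for any smooth bounded $\chi$, and each commutator $[T_\alpha(\hbar),\chi]$ gains one power of $\hbar$ since $T_\alpha(\hbar)$ is built from $\hbar^2$ times second-order differential operators; the idea is to insert such commutators $N$ times in order to extract the decay $\hbar^N$.

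First I would build a chain of cutoffs $\chi_0,\dots,\chi_{N-1}\in C_c^\infty(\Heis^d)$ with $\supp(\chi_j)\cap\supp(\psi_1)=\emptyset$ for every $j$, with $\chi_0\equiv 1$ on $\supp(\psi_2)$, and with $\chi_{j+1}\equiv 1$ on $\supp(\chi_j)$; this is possible since $\psi_1$ and $\psi_2$ have disjoint support. Exploiting $\psi_1\chi_j=0$, $\chi_0\psi_2=\psi_2$, and the identity $\chi_{j+1}[T_\alpha,\chi_j]=[T_\alpha,\chi_j]$ (because $[T_\alpha,\chi_j]$ has coefficients supported in $\supp(\chi_j)$), an induction on $N$ using the resolvent identity yields
\begin{equation*}
    \psi_1 R(z)\psi_2 = (-1)^N\,\psi_1 R(z)[T_\alpha,\chi_{N-1}]R(z)[T_\alpha,\chi_{N-2}]R(z)\cdots R(z)[T_\alpha,\chi_0]R(z)\psi_2.
\end{equation*}
Explicitly, $[T_\alpha(\hbar),\chi_j] = -2\hbar\sum_i(V_i\chi_j)(\hbar V_i) - \hbar^2\sum_i(V_i^2\chi_j)$ up to contributions from the subprincipal part $\hbar\tilde H_1$ of $T_\alpha$, all of which carry at least one factor of $\hbar$, where the $V_i$ are the horizontal vector fields entering $T_\alpha$.

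The core estimate to establish is that $\|R(z)[T_\alpha,\chi_j]\|_{\mathcal{L}(L^2)}\leq C\hbar\,|{\rm Im}\,z|^{-1}$ uniformly for $|z|$ bounded, which by the self-adjointness of $T_\alpha$ amounts to bounding $\|(\hbar V_i)R(z)\|_{\mathcal{L}(L^2)}$. For this I would invoke the subelliptic coercivity $\sum_i\|\hbar V_i u\|^2\leq C(\langle T_\alpha u,u\rangle+\|u\|^2)$; applying it to $u=R(z)f$ and using $T_\alpha R(z)=I+zR(z)$ gives
\begin{equation*}
    \sum_i\|\hbar V_i R(z)f\|^2 \,\leq\, C|\langle f,R(z)f\rangle|+C|z|\|R(z)f\|^2 \,\leq\, C(1+|z|)\,|{\rm Im}\,z|^{-2}\|f\|^2.
\end{equation*}
Chaining the $N$ resulting blocks with the terminal factor $\|R(z)\psi_2\|\leq|{\rm Im}\,z|^{-1}\|\psi_2\|_\infty$ then yields the announced bound $C_N\hbar^N|{\rm Im}\,z|^{-N-1}$.

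The main obstacle is the coercivity step: one must verify that the artificially glued operator in~\eqref{eq:locallaplacien} dominates the full Heisenberg horizontal gradient globally on $\Heis^d$. On $\supp(1-\rho_\alpha)$ this is immediate from the coercivity of the model Heisenberg subLaplacian $-\hbar^2\Delta_{\Heis^d}=\sum_i(\hbar V_i)^*(\hbar V_i)$, while on $\supp(\rho_\alpha)$ it follows because $(\gamma_\alpha^{-1})^*(-\hbar^2\Delta_{sR})\gamma_\alpha^*$ is itself a subLaplacian whose horizontal frame is, on the compact set $\gamma_\alpha^{-1}(K_\alpha)$, smoothly equivalent to the left-invariant frame $(\tilde X_i,\tilde Y_i)$ via the Darboux diffeomorphism $\gamma_\alpha$; a partition-of-unity argument using $\rho_\alpha$ and $1-\rho_\alpha$ assembles the two local coercivities into the required global inequality. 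Once this subelliptic estimate is secured, the rest of the argument is symbolic and functional-analytic bookkeeping.
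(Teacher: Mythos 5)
Your proposal is correct and follows essentially the same route as the paper: an iterated-commutator/nested-cutoff argument in the spirit of \cite[Lemma 4.3]{BK}, reducing the off-diagonal decay to the single block estimate $\|R(z)[T_\alpha(\hbar),\chi]\|_{\mathcal{L}(L^2)}=\O(\hbar\,|{\rm Im}\,z|^{-1})$. The one place you genuinely diverge is in how that block estimate is justified. The paper dismisses it as ``an easy consequence of the commutator formula of two semiclassical pseudodifferential operators'' (i.e.\ one treats $R(z)$ as an operator in the Fischer--Mikkelsen calculus and composes symbolically, which is why \cite[Lemma 6.9]{FM} and Hypotheses 6.2--6.3 of \cite{FM} are invoked earlier to put $T_\alpha(\hbar)$ in that framework). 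You instead give a direct, calculus-free derivation via the subelliptic coercivity $\sum_i\|\hbar V_i u\|^2\lesssim\langle T_\alpha(\hbar)u,u\rangle+\|u\|^2$ applied to $u=R(z)f$, and you correctly flag the only nontrivial point there: that the glued operator~\eqref{eq:locallaplacien} dominates the full left-invariant horizontal gradient globally, which you handle by patching coercivity on $\supp(\rho_\alpha)$ (smooth equivalence of horizontal frames under the Darboux diffeomorphism on a compact set) and on $\supp(1-\rho_\alpha)$ (the model Heisenberg subLaplacian). This makes your write-up more self-contained than the paper's, at the cost of implicitly relying on the self-adjointness of $T_\alpha(\hbar)$ — a hypothesis the paper also uses, and which deserves a word since the expression~\eqref{eq:locallaplacien} is not manifestly symmetric, though it is part of the hypotheses of \cite{FM} that the paper requires $T_\alpha(\hbar)$ to satisfy. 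Both arguments are sound; yours trades the abstract calculus for an elementary energy estimate.
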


\begin{proof}
    The proof goes exactly as in the one of \cite[Lemma 4.3]{BK}, the only estimate that one needs being that, for bounded $z$ in $\C\setminus\R$, we must have
    \begin{equation*}
        \|[T_\alpha(\hbar),\psi]\circ (T_\alpha(\hbar)-z)^{-1}\|_{\mathcal{L}(L^2(\Heis^d))} = \O(\hbar |{\rm Im}\,z|),
    \end{equation*}
    for any compactly supported function $\psi\in C_c^\infty(\Heis^d)$. This is an easy consequence of the commutator formula of two semiclassical pseudodifferential operators.
\end{proof}

Using the estimate of Lemma~\ref{lem:easyestimate}, we get for each $N\in\N$ a constant $C_N>0$ such that for bounded $z$, we have
\begin{equation*}
    \|(\gamma_\alpha^{-1})^* \left(\psi_\alpha^2\circ (T_\alpha(\hbar)-z)^{-1}\right) \circ (1-\overline{\psi}_\alpha)\|_{\mathcal{L}(L^2(M))} \le C_N \hbar^N |{\rm Im}\,z|^{-N-1}.
\end{equation*}

Moreover, we also have
\begin{multline*}
    \|(-\hbar^2 \Delta_{sr}-z)\circ \overline{\overline{\psi}}_\alpha\circ (-\hbar^2\Delta_{sR}-z)^{-1}\|_{\mathcal{L}(L^2(M))} \\= \|[-\hbar^2 \Delta_{sR},\overline{\overline{\psi}}_\alpha]\circ (-\hbar^2\Delta_{sR}-z)^{-1} + \overline{\overline{\psi}}_\alpha\|_{\mathcal{L}(L^2(M))} \leq C \hbar|{\rm Im}\,z|^{-1} + 1.
\end{multline*}
Finally, this gives, for a new constant $C_N'>0$ the following estimate
\begin{equation*}
    \|\tilde{\mathcal{R}}_\alpha(\hbar,z)\|_{\mathcal{L}(L^2(M))}\leq C_N'\hbar^N |{\rm Im}\,z|^{-N-1}.
\end{equation*}

Putting these estimates together in the definition of the remainder $\mathcal{R}_\alpha(\hbar)$, and since $\tilde{f}$ is a compactly supported analytic extension of $f\in C_c^\infty(\R)$, meaning it can be chosen to satisfy the estimate
\begin{equation*}
    \exists C_N>0,\forall z\in\supp(\tilde{f}),\quad|\Bar{\partial}_z \tilde{f}(z)|\leq C_N |{\rm Im}\,z|^N,
\end{equation*}
we obtain for each $\alpha\in A$,

\begin{equation*}
    \|\mathcal{R}_\alpha(\hbar)\|_{\mathcal{L}(L^2(M))} = \O_{L^2(M)\rightarrow L^2(M)}(\hbar^\infty).
\end{equation*}

\end{document}